\newtheorem{theorem}{Theorem}
\newtheorem{lemma}{Lemma}
\newtheorem{remark}{Remark}
\begin{document}
	
	\author{Mohd Harun, Sumit Kumar, Saurabh Kumar Singh}
	
	\title{ Hybrid subconvexity bound for $GL(3)\times GL(2)$ $L$-functions: $t$ and level aspect}
	\address{ Mohd Harun \newline {\em Department of Mathematics and Statistics, Indian Institute of Technology, Kanpur, India; \newline  Email: harunmalikjmi@gmail.com
	} }	
	
	\address{Sumit Kumar \newline {\em Alfr\'ed R\'enyi Institute of Mathematics, Budapest, Hungary;\newline 
			Email: sumit@renyi.hu
	}}
	
	\address{ Saurabh Kumar Singh \newline {\em Department of Mathematics and Statistics, Indian Institute of Technology, Kanpur, India; \newline  Email: skumar.bhu12@gmail.com
	} }
	
	\subjclass[2010]{Primary 11F66, 11M41; Secondary 11F55.}
	\date{\today}
	\keywords{Maass forms, subconvexity, Rankin-Selberg $L$-functions.}

	\maketitle
	\begin{abstract}
		In this article, we will get non-trivial estimates for the central values of degree six Rankin-Selberg $L$-functions $L(1/2+it, \pi \times f)$  associated with a ${GL(3)}$ form  $\pi$ and a ${GL(2)} $ form $f$ using the delta symbol approach in the hybrid ${GL(2)}$ level and $t$-aspect. 
	\end{abstract}

	\section{Introduction}
	In this article, we yet again explore the subconvexity problem (ScP) for the degree six Rankin-Selberg $GL(3) \times GL(2)$ $L$-functions using Munshi's delta method \cite{ICM}. This theme was initiated by Munshi \cite{r1}   and was explored further by the second author and the third author along with Sharma, Mallesham, and various others (see \cite{SK}, \cite{r21}, \cite{r22}, \cite{r23}, \cite{PS}) to resolve subconvexity for these $L$-functions in various aspects ($t$, twist and spectral aspect). Apparently, the delta method approach so far has been effective for $L$-functions associated with forms admitting a  varying $GL(1)$ factor, barring a few results (see \cite{SK}, \cite{r21}, \cite{r23}), where spectral parameters of a  higher degree form vary, or levels of two higher degree forms vary simultaneously.  In the latter case, the authors produce subconvexity in some relative range of the levels and their method does not allow them to fix one of the forms. As a matter of fact the level aspect subconvexity for automorphic $L$-functions of degree at least three (except for a few instances of  Rankin-Selberg $L$-functions of degree $4$ and $8$)  still remains an important open problem. However,  this problem is more amenable if one allows other parameters to vary along with the level of a form.  The aim of this article is to prove such a result for $GL(3) \times GL(2)$ Rankin-Selberg $L$-functions when archimedean parameters and level of the $GL(2)$ form vary in some relative range. 
	\begin{theorem}\label{main theorem}
		Let $\pi$ be a Hecke-Maass cusp form for $SL(3,\mathbb{Z})$  and $f$ be a holomorphic Hecke cusp form for the congruent subgroup $\Gamma_0(M) \subset SL(2,\mathbb{Z})$ with trivial nebentypus and $M=p_1p_2$, where $p_2< p_1$ are two distinct primes.  Then for any $\epsilon > 0$, we have
		\begin{align*}
			L \left( {1}/{2}+it, \, \pi \times f \right) \ll_{\pi, \, \varepsilon}\, {\mathcal{Q}^{1/4}} \left(\frac{p_1}{p_2t^2}\right)^{3/40+\epsilon},
		\end{align*} 
		where $\mathcal{Q}=M^3 (1+|t|)^6$ denotes the  analytic conductor of the $L$-function $	L \left( {1}/{2}+it, \, \pi \times f \right) $.
	\end{theorem}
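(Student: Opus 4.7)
The plan is to apply the Munshi-type delta-method machinery, tuned to the hybrid $t$ and level aspect, using the smaller prime $p_2$ as a conductor-lowering device. By the standard approximate functional equation for the degree-six $L$-function, bounding $L(1/2+it, \pi \times f)$ reduces to bounding dyadic sums
\begin{equation*}
S(N) = \sum_{n \sim N} \lambda_\pi(1,n) \lambda_f(n) n^{-it} V(n/N)
\end{equation*}
for $N \leq (MT^2)^{3/2+\epsilon}$, where $T = 1+|t|$. The trivial bound is $S(N) \ll N^{1+\epsilon}$, and we must save a factor of $(p_1/(p_2 T^2))^{3/40}$ over it.

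\textbf{Separating variables.} Rewrite $S(N) = \sum_{n,m \sim N,\, n=m} \lambda_\pi(1,n) \lambda_f(m) (n/m)^{-it} W(n,m)$, insert the tautological congruence $n \equiv m \pmod{p_2}$, and then apply the DFI delta expansion with moduli of the form $p_2 q$, $q \leq Q$, where $Q$ is to be optimized at the end. This conductor-lowering step is the key device for extracting a level-aspect saving: it replaces the generic delta-method choice $Q \sim \sqrt{N}$ by a shorter effective range that carries the level $p_2$ into the modulus.

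\textbf{Dual analysis and Cauchy–Schwarz.} Apply $GL(3)$ Voronoi summation to the $n$-sum and level-$M$ $GL(2)$ Voronoi to the $m$-sum. The stationary phase analysis of the archimedean integrals (governed by the interplay between $n^{-it}$ and the oscillatory Voronoi kernels) produces dual variables of length roughly $N^*_\pi \asymp (p_2 Q T)^3/N^2$ and $N^*_f \asymp (p_2 Q)^2 M T^2/N$. Apply Cauchy–Schwarz in the $GL(3)$ dual variable to discard $\lambda_\pi$, controlling the second moment by Rankin–Selberg; then open the square and apply Poisson summation in the long $GL(3)$ dual variable. The zero frequency yields a diagonal whose size is governed by a congruence constraint on $m, m'$ modulo a divisor of $(p_2 q)^2 M$; the nonzero frequencies lead to complete character sums modulo $(p_2 q)^2 M$.

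\textbf{Main obstacle.} The crux is securing square-root cancellation in those character sums at the ramified primes $p_1, p_2$ and the circle-method modulus $q$ simultaneously, with the coprimality conditions tracked carefully enough that the multiplicative factorization is clean and Deligne–Weil applies at each prime factor without loss. Once this is accomplished, optimizing $Q$ as a function of $N, p_1, p_2, T$ to balance the diagonal, the off-diagonal character-sum, and the dual-sum contributions produces the claimed saving of $(p_1/(p_2 T^2))^{3/40}$; the appearance of the ratio $p_1/p_2$ inside the saving reflects the asymmetry of having lowered the conductor at $p_2$ rather than $p_1$, while the exponent $3/40$ reflects the degree-six conductor and the dimensions of $GL(3)$ Voronoi.
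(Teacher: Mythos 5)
Your plan omits the analytic conductor lowering, which is the structural engine of the whole argument and cannot be dispensed with. In the paper the detection $\delta(n,m)$ is preceded by inserting an integral $\frac{1}{K}\int V_1(\nu/K)(n/m)^{i\nu}\,d\nu$, which forces $|n-m|\ll N/K$, so that after restricting to $n\equiv m\bmod p_1$ the delta symbol runs over moduli $q\leq Q=\sqrt{N/(p_1 K)}$. The delta method does not leave $Q$ free — once the range of $(n-m)/p_1$ is fixed, $Q$ is forced — so your plan to ``optimize $Q$ at the end'' has no free variable to optimize. The genuine degree of freedom is the analytic parameter $K$, and it is exactly the balance between the zero-frequency contribution (favoring small $K$) and the nonzero-frequency contribution (favoring large $K$) that produces the optimal $K=(p_2 t^2/p_1)^{2/5}$ and hence the exponent $3/40$. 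With no $K$-integral, $Q\asymp\sqrt{N/p}$ is rigid, the off-diagonal dual length $(pQ)^3/N$ is too short to dominate the diagonal in the relevant ranges, and you cannot close the argument.

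A second, independent error is the choice of prime. You lower at $p_2$; the paper lowers at the \emph{larger} prime $p_1$, and this is not cosmetic. Putting $p_1$ into the modulus means that in $GL(2)$ Voronoi the level $M=p_1p_2$ factorizes with $D_1=(p_1 q,M)=p_1$ lying inside the modulus and $D_2=p_2$ (the small prime) in the complement; the optimal $K$ then becomes $(p_2 t^2/p_1)^{2/5}$, and the constraint $1\ll K\ll t^{1-\epsilon}$ yields subconvexity throughout $p_2<p_1<p_2 t^2$. With $p_2$ in the modulus as you propose, $D_2=p_1$ and the optimal $K$ would be $(p_1 t^2/p_2)^{2/5}$, which exceeds $t^{1-\epsilon}$ unless $p_1/p_2 \ll t^{1/2}$ — a strictly weaker range than the theorem claims. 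Finally, your ``main obstacle'' framing is also off: after reciprocity the paper's character sums (Lemmas \ref{ch0} and \ref{a31}) are handled by elementary divisor counting and congruence restrictions in the two cases $n_2=0$ and $n_2\neq 0$, not by invoking Deligne--Weil square-root cancellation at each ramified prime; the only Weil input appears earlier, in bounding the $GL(3)$ Kloosterman sum after Voronoi.
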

	\begin{remark}
		The above bound gives subconvexity in the range $p_2 < p_1 < p_2t^2$. Furthermore, fixing $p_1$ and $p_2$, we generalize  the main result  of Munshi \cite{r1} and Corollary 1.3 of  Lin--Sun \cite{r26}. 
	\end{remark}

	$L \left( {1}/{2}+it, \, \pi \times f \right) $ may be regarded as the $L$-value $	L \left( {1}/{2}, \, \pi \times F \right) $ at the  central point $1/2$ associated to $\pi$ and  $F=f \otimes |.|^{it}$ having varying level and varying archimedian parameters $( it, -it)$. Thus we also get subconvexity  for $L \left( {1}/{2}, \, \pi \times F \right) $ when the spectral parameters of the $GL(2)$ form $F$ varies in the range $p_1>p_2>p_1/t^2$, $t \gg p_1^{\epsilon}$.  In particular, we get subconvexity in the $GL(2)$ level aspect when the level $M \asymp p_1^2$ of  $F$ varies with the mild condition $t \asymp p_1^\epsilon$.  As was observed in \cite{SK}, in the delta method approach,   the  $GL(3) \times GL(2)$  structure does not distinguish between the $t$-aspect and $GL(2)$-spectral aspect. Indeed, the analytic conductor of  $F=f \otimes |.|^{it}$ and  $f$ (with varying spectral parameters)  remains the same.  Thus 
	our method  can also be  adapted to give subconvexity for $L(1/2, \pi \times f)$ in the range $p_2 < p_1 < p_2t_f^2$, where $(it_f,-it_f)$ are the spectral parameters of $f$,  generalising \cite{SK}  and also Huang \cite{BH}.

	For a vast introduction to the subconvexity problem and its importance, we refer the readers to \cite{PM}, \cite{ICM}.  In recent years, there has been tremendous progress on the subconvexity problem, most notably by Nelson \cite{PN} who resolved the problem for automorphic $L$-functions of any degree in the $t$-aspect and spectral aspect (`generic' cases). In the level aspect, the problem is  `largely' open for degree  $d \geq 3$ automorphic $L$-functions.  In the hybrid (level and $t$) aspect,  the first subconvexity result was proved by Heath-brown \cite{HB} for degree one $L$-functions $L(1/2+it, \chi)$ generalizing the classical  Burgess bound \cite{r7} and the Weyl bound \cite{r6}. In the $GL(2)$ case,  the influential work of Michel-Vankatesh \cite{MV} settles the subconvexity problem in all aspects.  In this article, we prove subconvexity for degree six  Rankin-Selberg $L$ functions in the hybrid $GL(2)$ level and $t$-aspect.\\
	
	\textbf{Comment on the method} We apply separation of oscillation technique coupled with `arithmetic' and `analytic' conductor lowering trick laid out by Munshi \cite{RM1}, \cite{s4} to prove Theorem   \ref{main theorem}.  Analysis of integral transforms forms the technical heart of our paper.  On comparing with Lin-Sun \cite{r26}, we observe that the presence of an extra integral at the beginning (analytic conductor lowering) does not obstruct us in obtaining the best possible exponent $3/20$ by the delta method. In \cite{KA} Aggarwal observed that the analytic conductor lowering is in-built in the delta method. Thus the presence of extra integral, in the beginning, is merely psychological. The $GL(3) \times GL(2)$ structure is crucial for the proof as the final character sum transforms into an additive character which gives us extra savings in the  `Poisson' step after Cauchy.   A contrasting difference between our proof and  Munshi and Lin-Sun's proof is the length of the `dual' $GL(2)$-sum which is larger than the `modulus' in our case.   We provide different arguments (by analyzing integral transforms separately) to get optimal savings in `the zero frequency'. Our method does not work for $p_1=p_2$ or $p_2=1$. New ideas are required to tackle these cases. In an upcoming work, Munshi has proved subconvexity for $L(1/2, \pi \times f)$ in the $GL(2)$ level aspect when the level $P$ is of the form $p^r$, $r\geq 3$. In an upcoming article, we apply his ideas to extend our result for  $p_1=p_2$.

	\textbf{Notations:} For  $z \in \mathbb{C}$, we set $e(z):= e^{2 \pi i z}$.  By $A \sim B$ we mean that $B \leq A \leq 2B$.  We use $X \ll Y$ to mean that there is a constant $K >0$ such that $|X| \leq K|Mt|^{\epsilon} Y $ and by  $A \asymp B$ we mean that $ A \ll B$ and $B \ll A$.  By the term `savings'  we mean  `initial bound'  / ` final bound'.

	\section{Sketch of proof}
	This section will provide a rough idea of the steps involved in proving our main result \eqref{main theorem}. By the approximate functional equation for the $L$-function $L\left(\frac{1}{2}+ it, \pi \times f \right)$, we see that our main object of study becomes the sum $S_r(N)$ given in  \eqref{a28}. We need to analyze this sum  $S_r(N)$ and show some cancellations to get our desired result. On applying the conductor lowering trick (see  \cite{RM1} and \cite{s4}) and a Fourier expansion of  delta symbol due to Duke-Friedlander-Iwaniec,  we arrive at
	\begin{align*}
		\frac{1}{KQp_1}&\displaystyle\int_{\mathbb{R}}\,\int_{\mathbb{R}}\, V_1\left(\frac{\nu}{K}\right)\, \displaystyle\sum_{1\leq q\leq Q}\frac{g(q,u)}{q}\sideset{}{^\star}{\displaystyle\sum}_{a \, \mathrm{mod} \, q}\,\,\sum_{b \, \mathrm{mod} \, p_1} \notag \\
		\times&\,\sum_{n=1}^{\infty}A_{\pi}(r, n)e\left(\frac{(a+bq)n}{p_1q }\right) \,n^{i\nu}\,e\left(\frac{nu}{p_1qQ}\right) V_1\left( \frac{n}{N}\right)\notag \\
		\times&\,\sum_{m=1}^{\infty}\lambda_f (m)\,e\left(-\frac{(a+bq)m}{p_1q }\right)\, m^{-i(t+\nu)}\,e\left(\frac{-mu}{p_1qQ}\right) V_2\left( \frac{m}{N}\right)\, \mathrm{d}u\,\mathrm{d}\nu,
	\end{align*}
	where $V_1$ and $V_2$ are the smooth and compactly supported functions, supported on $[1/2, 5/2]$.
	To capture the main ideas we consider the generic case $N = (p_1p_2)^{3/2}\,t^3$, $r =1$, $m \asymp N$, $n\asymp N$ and $q \sim Q = \sqrt{N/p_1K}$ with $t^{\epsilon} < K < t $. We get  $S_r(N) \ll N^2$ on estimating trivially.  
	Next, we apply the summation formulae to the sum over $m$ and $n$. On applying  the $GL(3)$ Voronoi summation formula to  the sum over $n$ we arrive at 
	\begin{align*}
		\frac{N^{2/3+i\nu}}{p_1q} \sum_{\pm}  \sum_{n_{2}\sim (p_1QK)^3/N}  \frac{A_{\pi}(n_{2},1)}{ n^{1/3}_{2}} S\left( \overline{(a+bq)}, \pm n_{2}; p_1q\right) \eth_1^{\pm}(...),
	\end{align*} where $ \eth_1^{\pm}(...)$ is an integral transform defined in Lemma \ref{a8}.  Assuming square-root cancellations in $ \eth_1^{\pm}(...)$ and applying Weil bound for the Kloosterman sum, we get   $\frac{N}{(p_1QK)^{3/2}}$ savings in this step.
	On applying the $GL(2)$ Voronoi summation formula to the sum over $m$ we arrive at
	\begin{align*}
		\frac{ N^{3/4-i(t+\nu)}}{(p_1q)^{1/2}\,p_2^{1/4}}  \sum_{\pm} \sum_{m \sim (p_1Qt)^2p_2/N} \frac{\lambda_f(m)}{m^{1/4}}  e\left(-m  \frac{\overline{(a+bq) p_2}}{q}\right) \mathcal{I}^{\pm} (...),
	\end{align*}  where $\mathcal{I}^{\pm} (...)$ is an integral transform defined in Lemma \ref{a9}. We save $N/p_1p_2^{1/2}Qt\,$ in this step. The summation formulae transform  $S_{r}(N)$ to the following expression with a negligible error term
	\begin{align*}
		\frac{ N^{17/12}}{KQ^{7/2}\,p^{5/2}_1p_2^{1/4}}\,\displaystyle\sum_{q\sim Q}\,\sum_{n_{2}\sim (p_1QK)^{3}/N}  \frac{A_{\pi}(n_{2},1)}{ n^{1/3}_{2}}\,\sum_{m \sim (p_1Qt)^{2}p_2/N} \frac{\lambda_f(m)}{m^{1/4}}\, \mathcal{C}(...)\,\mathcal{J}(...),
	\end{align*}
	where the character sum $\mathcal{C}(...)$ is given by
	\begin{align*}
		\sideset{}{^\star}{\displaystyle\sum}_{a \, \mathrm{mod} \, q}\,\,\,\sum_{b \, \mathrm{mod} \, p_1}\,S\left( \overline{(a+bq)}, \pm n_{2}; p_1 q\right)\,e\left(-m  \frac{\overline{(a+bq) p_2}}{p_1q}\right)  \rightsquigarrow \, p_1q\, e\left(\pm \frac{\bar{m}p_2n_2}{p_1q}\right).
	\end{align*}  Thus we  save $\sqrt{p_1Q}$ in the $a$ and $b$-sum.  In the $\nu$-integral, we get the square-root cancellation of size $\sqrt{K}$ by stationary phase analysis.  Our total savings so far is
	\begin{align*}
		\frac{N}{(p_1QK)^{3/2}} \times \frac{N}{p_1p^{1/2}_2Qt} \times \sqrt{p_1Q}\times \sqrt{K}   =\frac{N}{p_1\,p_2^{1/2}\,t} . 
	\end{align*}
	Thus, we need to save $p_1\,p_2 ^{1/2}\,t$ and a little extra to go beyond the trivial bound. To get these savings, we apply  Cauchy-Schwartz inequality to the  sum over $n_2$ to get rid of $A_{\pi}(1,n)$ and we arrive at
	\begin{align*}
		\frac{ N^{5/12}\,}{Q^{3}K^{1/2}\,p^2_1p_2^{1/4}}\,\Bigg(\sum_{n_{2}\sim (p_1QK)^3/N} \,\left|\displaystyle\sum_{q\sim Q}\,\,\sum_{m \sim (p_1Qt)^2p_2/N} \frac{\lambda_f(m)}{m^{1/4}}\,e\left(\frac{\bar{m}p_2n_2}{p_1q}\right)\, \mathcal{J}(...)\right|^2 \Bigg)^{1/2}.
	\end{align*}
	Opening the absolute valued square and applying the Poisson summation formula to sum over $n_2$, we see that the sum over $n_2$ gets transformed into the following expression
	\begin{align*}
		\sum_{n_{2}}\,\displaystyle\sum_{q\sim Q}\,\displaystyle\sum_{q'\sim Q}\,\,\sum_{m \sim (p_1Qt)^2p_2/N}\,\sum_{m' \sim (p_1Qt)^2p_2/N} \lambda_f(m)\lambda_{f}(m^\prime) \,\mathfrak{C}(...)\,\mathcal{G}(...),
	\end{align*} where $\mathfrak{C}(...)$ is a character sum defined in  \eqref{a20} and the integral transform $\mathcal{G}(...)$ is defined in  \eqref{a26}.  In the case of zero frequency $n_2 =0$, we will save the whole diagonal length i.e. $p^2_1Q^3 t^2p_2/N$ which is sufficient if 
	\begin{align*}
		\frac{p^2_1Q^3 t^2p_2}{N} > p^2_1\,p_2\,t^2 \hspace{0.3cm} \Longleftrightarrow \hspace{0.3cm} K <  \left(\frac{p_2t^2}{p_1}\right)^{3/4}.
	\end{align*} In the case of non-zero frequency  $n_2 \neq 0$, we will save $	{(p_1QK)^{3}}/{N\sqrt{K}}$,  which is satisfactory when
	\begin{align*}
		\frac{(p_1QK)^{3}}{N\sqrt{K}} > p^2_1\,p_2\,t^2  \hspace{0.3cm} \Longleftrightarrow \hspace{0.3cm} K > \,\left(\frac{p_2t^2}{p_1}\right)^{1/4}.
	\end{align*}
	
	The optimal choice for  $K$ is  obtained by equating both the savings and  we get $K = \left({p_2t^2}/{p_1}\right)^{2/5}$.  Thus we obtain the range $p_2 < p_1 < p_2t^2$.

	\section{Preliminaries} 
	\subsection{Delta Method}\label{Delta}
	In this paper, we use the delta symbol expansion due to Duke, Friedlander, and Iwaniec. More specifically, we use the expansion $(20.157)$ given in Chapter 20 of \cite{ana}. Let $\delta : \mathbb{Z}\to \{0,1\}$ be defined by
	\[
	\delta(n,m)=
	\begin{cases}
		1 &\text{if}\,\,n=m \\
		0 &\text{otherwise}.
	\end{cases}
	\]
	For $n,m\in\mathbb{Z}\cap [-2L,2L]$, we have
	\begin{equation}\label{a1}
		\delta(n,m)=\frac{1}{Q} \sum_{1 \leq q \leq Q} \frac{1}{q} \, \ \sideset{}{^\star} \sum_{a\bmod q}e\left(\frac{(n-m) a}{q}\right)\int_{\mathbb{R}} \psi(q,x)e\left(\frac{(n-m) x}{q	Q}\right) d x, 
	\end{equation} where $Q=2L^{1/2}$ and   $\psi(q,u)$  satisfies the following properties  (see $(20.158)$ and $(20.159)$ of \cite{ana} and \cite[Lemma 15]{r24})
	
	\begin{align}\label{delta}
		&\psi(q,x)=1+h(q,x),\,\,\,\,\text{with}\,\,\,\,h(q,x)=O\left(\frac{Q}{q }\left(\frac{q}{Q}+|x|\right)^A\right)\\
		&\psi(q,x)\ll |x|^{-A}, \ \ A >1\\
		&x^j \frac{\partial^j}{ \partial x^j} \psi(q, x) \ll \  \min \left\lbrace \frac{Q}{q}, \frac{1}{|x|} \right\rbrace \  \log Q, \ \ j \geq 0.
	\end{align}
	\vspace{0.2cm}
	In particular, the second property implies that the effective range of the  $x$-integral in \eqref{a1} is given by  $[-L^{\epsilon}, L^{\epsilon}]$. It also follows that if $q \ll Q^{1- \epsilon}$ and $  x \ll Q^{- \epsilon} $, then  $ \psi(q, x)$ can
	be replaced by $1$ at a cost of a negligible error term. If $ q \gg Q^{1- \epsilon}$, then we get $ x^j \frac{\partial^j}{ \partial x^j} \psi(q, x) \ll Q^{\epsilon}$, for any $ j \geqslant 1$. If $ q \ll  Q^{1- \epsilon}$  and $  Q^{- \epsilon} \ll |x| \ll   Q^{ \epsilon}$, then $ x^j \frac{\partial^j}{ \partial x^j} \psi(q, x) \ll Q^{\epsilon}$, for any $ j \geqslant 1$. 
	
	\subsection{Voronoi  formula for $GL(3)$ } \label{gl3 maass form} 
	
	Let $\pi$ be a Maass form of type $(\nu_{1}, \nu_{2})$ for $SL(3, \mathbb{Z})$.  Langlands parameters $({\bf \alpha}_{1},{\bf \alpha}_{2},{\bf \alpha}_{3})$ of $\pi$ are given by
	$${\bf \alpha}_{1} = - \nu_{1} - 2 \nu_{2}+1, \, {\bf \alpha}_{2} = - \nu_{1}+ \nu_{2},  \, {\bf \alpha}_{3} = 2 \nu_{1}+ \nu_{2}-1.$$
	For $\psi \in C_{c}^{\infty}(0, \infty)$, we denote $\tilde \psi(s) = \int_{0}^{\infty} \psi(x) x^{s-1} \mathrm{d}x$ to be its Mellin transform. For $\sigma > -1 + \max \{-\Re({\bf \alpha}_{1}), -\Re({\bf \alpha}_{2}), \Re({\bf \alpha}_{3})\}$ and $\ell =0$ and $1$, we define
	\begin{equation} \label{A15}
		G_{\ell}(y) :=  \frac{1}{2\pi i} \int_{(\sigma)} (\pi^3 y)^{-s} \, \prod_{i=1}^{3} \frac{\Gamma\left(\frac{1+s+{\bf \alpha}_{i}+ \ell}{2}\right)}{\Gamma\left(\frac{-s-{\bf \alpha}_{i}+ \ell}{2}\right)} \tilde \psi(-s) \mathrm{d}s. 
	\end{equation}
	
	Set \[G_{\pm}(y) = \frac{1}{2\pi^{3/2}} \big(G_0(y) \mp iG_1(y) \big). \]
	\begin{lemma} \label{gl3voronoi}
		Let $\psi (x)$ be a compactly supported smooth function on $(0,\infty)$. Let $A_{\pi}(m,n)$ denote the $(m,n)$-th Fourier coefficients of the Maass form $\pi$. Then we have
		\begin{align} \label{GL3-Voro}
			& \sum_{n=1}^{\infty} A_{\pi}(m,n) e\left(\frac{an}{q}\right) \psi(n) \\
			\nonumber & =q  \sum_{\pm} \sum_{n_{1}|qm} \sum_{n_{2}=1}^{\infty}  \frac{A_{\pi}(n_{2},n_{1})}{n_{1} n_{2}} S\left(m \bar{a}, \pm n_{2}; mq/n_{1}\right) \, G_{\pm} \left(\frac{n_{1}^2 n_{2}}{q^3 m}\right)
		\end{align} 
		where $(a,q)=1, \bar{a}$ is the multiplicative inverse modulo $q$ and $$S(a,b;q) = \sideset{}{^\star}{\sum}_{x \,\rm mod \, q} e\left(\frac{ax+b\bar{x}}{q}\right) $$
		is the Kloostermann sum.
	\end{lemma}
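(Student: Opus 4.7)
The plan is to derive the identity from the functional equation of the Godement--Jacquet $L$-function of $\pi$ twisted by additive characters modulo $q$, following the classical approach of Miller--Schmid and Goldfeld--Li. The starting point is to convert the sum into a Mellin--Barnes integral: applying Mellin inversion to $\psi$, one writes $\psi(n) = \frac{1}{2\pi i}\int_{(c)} \tilde\psi(s)\, n^{-s}\,ds$ for $c$ sufficiently large, so that
\[
\sum_{n=1}^\infty A_\pi(m,n)\,e(an/q)\,\psi(n) = \frac{1}{2\pi i}\int_{(c)} \tilde\psi(s)\,D_{m,a,q}(s)\,ds,
\]
where $D_{m,a,q}(s)=\sum_n A_\pi(m,n)\,e(an/q)\,n^{-s}$ converges absolutely in that region.

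The next step is to turn this additively twisted Dirichlet series into something meromorphic. I would parametrise $n$ by $n_1=(n,q)$, apply the Hecke relations to $A_\pi(m,n)$ to separate $A_\pi(n_2,n_1)$ factors, and then, on the coprime part of $n$, expand the additive character via Gauss-sum orthogonality as a linear combination of Dirichlet characters of conductor dividing $q/n_1$. This expresses $D_{m,a,q}(s)$ as a finite sum of standard $L$-values $L(s,\pi\otimes \chi)$ with explicit arithmetic weights. Applying the functional equation
\[
L(s,\pi\otimes\chi) = \varepsilon(\pi\otimes\chi)\, c_\chi^{3(1/2-s)} \prod_{i=1}^{3}\frac{\Gamma\!\left(\tfrac{1-s-\alpha_i}{2}\right)}{\Gamma\!\left(\tfrac{s+\alpha_i}{2}\right)} L(1-s,\tilde\pi\otimes\bar\chi)
\]
(with the parity-dependent variants producing both $G_0$ and $G_1$), then shifting the $s$-contour far to the left, converts the integral representation into a sum on the dual side. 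The resulting archimedean weight, after grouping even and odd parts, is exactly the combination $G_\pm(y)$ defined in \eqref{A15}.

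The final and most delicate step is reassembling the multiplicative character pieces on the dual side back into the Kloosterman sum $S(m\bar a,\pm n_2; mq/n_1)$. The main obstacle is the bookkeeping: one must track Gauss sums, root numbers, and conductor divisibility conditions carefully so that the modulus comes out as $mq/n_1$ (not merely $q/n_1$)---the appearance of $m$ here reflects the fact that we are summing $A_\pi(m,n)$ rather than $A_\pi(1,n)$, and it is this feature that makes the $GL(3)$ case genuinely subtler than the $GL(2)$ Voronoi formula. In practice, for a paper of this nature I would simply quote the result in the precise form stated above from Miller--Schmid or Goldfeld--Li, since the derivation, while conceptually straightforward, is a substantial self-contained calculation.
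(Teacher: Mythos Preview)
Your proposal is correct and its conclusion matches the paper exactly: the paper gives no argument and simply cites \cite{r2} (X.~Li) for this standard $GL(3)$ Voronoi formula, which is precisely what you recommend doing in your final sentence. The outline you sketch (Mellin inversion, functional equation of twisted $L$-functions, reassembly into Kloosterman sums) is a reasonable description of one route---closer in spirit to Goldfeld--Li than to the automorphic-distribution method of Miller--Schmid---but the paper does not include any such sketch, treating the lemma purely as a black-box citation.
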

	\begin{proof}
		See \cite{r2}.
	\end{proof}
	The following lemma gives an asymptotic expansion for $G_{\pm}(x)$.
	\begin{lemma} \label{GL3oscilation}
		Let $G_{\pm}(x)$ be as above,  and  $g(x) \in C_c^{\infty}(X,2X)$. Then for any fixed integer $K \geq 1$ and $xX \gg 1$, we have
		\begin{equation*}
			G_{\pm}(x)=  x \int_{0}^{\infty} g(y) \sum_{j=1}^{K} \frac{c_{j}({\pm}) e\left(3 (xy)^{1/3} \right) + d_{j}({\pm}) e\left(-3 (xy)^{1/3} \right)}{\left( xy\right)^{j/3}} \, \mathrm{d} y + O \left((xX)^{\frac{-K+5}{3}}\right),
		\end{equation*}
		where $c_{j}(\pm)$ and $d_{j}(\pm)$ are some  absolute constants depending on $\alpha_{i}$,  $i=1,\, 2,\, 3$.  
	\end{lemma}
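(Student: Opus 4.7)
The strategy is to combine the Mellin--Barnes definition \eqref{A15} with Stirling's expansion of the gamma quotient, followed by a stationary-phase analysis of the resulting oscillatory contour integral. First, I would substitute $\tilde{g}(-s)=\int_0^{\infty} g(u)\,u^{-s-1}\,du$ into \eqref{A15} and swap the order of integration, which is permitted because $g$ is compactly supported in $(0,\infty)$ and the gamma quotient has only Stirling decay on any fixed vertical line. This reduces the problem to an asymptotic expansion for the kernel
\[
K_{\pm}(v):=\frac{1}{4\pi^{3/2}i}\int_{(\sigma)}(\pi^3 v)^{-s}\bigl(\gamma_0(s)\mp i\gamma_1(s)\bigr)\,ds,\qquad \gamma_\ell(s)=\prod_{i=1}^{3}\frac{\Gamma\bigl(\tfrac{1+s+\alpha_i+\ell}{2}\bigr)}{\Gamma\bigl(\tfrac{-s-\alpha_i+\ell}{2}\bigr)},
\]
after which the lemma follows upon setting $v=xu$ and integrating against $g(u)/u$; the localisation of $g$ to $[X,2X]$ is precisely what forces the error to be uniform in $v\asymp xX$.

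The second step is to put the gamma quotient in a form amenable to Stirling. Applying the reflection identity $\Gamma(z)\Gamma(1-z)=\pi/\sin(\pi z)$ followed by Legendre's duplication formula, each factor can be rewritten as $2^{-s-\alpha_i}\Gamma(s+\alpha_i+1)$ times a trigonometric factor ($\sin$ for $\ell=0$, $\cos$ for $\ell=1$). Expanding these products and forming the specific combination $\gamma_0\mp i\gamma_1$ collapses the resulting trigonometric polynomial into pure exponentials of the shape $e^{\pm 3i\pi s/2}$; this is the very reason the $G_{\pm}$ combination is used in the $GL(3)$ Voronoi formula, and it isolates the two oscillation directions responsible for the $e(\pm 3v^{1/3})$ in the conclusion.

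Inserting Stirling's expansion
\[
\Gamma(s+\alpha_i+1)=\sqrt{2\pi}\,(s+\alpha_i+1)^{s+\alpha_i+1/2}e^{-(s+\alpha_i+1)}\Bigl(1+\sum_{j=1}^{K-1}\frac{b_j(\alpha_i)}{s^{j}}+O(|s|^{-K})\Bigr),
\]
turns the integrand of $K_\pm(v)$ into $\exp\bigl(\Phi_\pm(s,v)\bigr)$ times an amplitude expansion in $s^{-1}$, with phase $\Phi_\pm(s,v)=3s\log s - s\log(8\pi^3 v)\pm 3i\pi s/2 + O(1)$ whose unique saddle lies at $s_0\asymp v^{1/3}$. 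Shifting the contour through this saddle and applying the standard steepest-descent method to each term of the amplitude series contributes $c_j(\pm)\,v^{1-j/3}e(\pm 3v^{1/3})$, with constants depending only on the Langlands parameters, and a remainder of size $v^{(5-K)/3}$. Substituting $v=xu$ and integrating against $g(u)/u$ then yields the stated main term plus the claimed error.

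The principal technical difficulty is the uniform bookkeeping of the two error sources---the $O(|s|^{-K})$ Stirling remainder and the stationary-phase remainder---together with verifying that truncating the contour far from the saddle only contributes negligibly in the regime $xX\gg 1$. Rather than carry out this analysis from scratch, one could alternatively cite the corresponding expansion of the $GL(3)$ Voronoi kernel from X.~Li or from the subsequent streamlined treatments in the literature, from which the lemma follows after identifying the constants $c_j(\pm)$ and $d_j(\pm)$.
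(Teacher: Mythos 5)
Your proposal is correct and follows the standard approach: the Mellin--Barnes representation of $G_\pm$, the reflection and Legendre duplication identities to rewrite the gamma quotient in Stirling-amenable form, extraction of the $e^{\pm 3\pi i s/2}$ exponentials via the $G_0 \mp i G_1$ combination, and steepest descent through the saddle $s_0 \asymp (xy)^{1/3}$ to produce the phase $e(\pm 3(xy)^{1/3})$ and the $(xy)^{-j/3}$ amplitude expansion. The paper does not prove this lemma itself; it simply writes ``See \cite{r2},'' which is X.~Li's paper, and Li's proof proceeds exactly by the method you describe, so your fallback of quoting that source directly is also what the authors do.
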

	\begin{proof}
		See  \cite{r2}.
	\end{proof}
	
	The following well-known lemma gives the Ramanujan bound for the $GL(3)$ Fourier coefficients $A_{\pi}(m,n)$ on average.	
	\begin{lemma} \label{ramanubound}
		We have 
		$$\mathop{\sum \sum}_{n_{1}^{2} n_{2} \leq X} \vert A_\pi(n_{1},n_{2})\vert ^{2} \ll \, X^{1+\epsilon}.$$
	\end{lemma}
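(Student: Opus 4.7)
The plan is to deduce the bound from the analytic properties of the Rankin--Selberg $L$-function $L(s, \pi \times \tilde{\pi})$. Concretely, the Hecke relations at each prime together with the standard identification of Satake parameters give a Dirichlet series identity of the shape
\begin{equation*}
\sum_{n_1, n_2 \geq 1} \frac{|A_\pi(n_1, n_2)|^2}{(n_1^2 n_2)^s} \; = \; \frac{L(s, \pi \times \tilde{\pi})}{\zeta(3s)}, \qquad \Re(s) > 1,
\end{equation*}
obtained by factoring the Euler product. This reduces the claim to an upper bound for the partial sum of the coefficients of the right-hand side.

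Next, I would invoke the deep analytic input due to Jacquet--Piatetski-Shapiro--Shalika: $L(s, \pi \times \tilde{\pi})$ admits a meromorphic continuation to $\mathbb{C}$ with a unique simple pole at $s = 1$, a functional equation relating $s$ to $1 - s$, and polynomial growth in $|\Im(s)|$ on vertical strips (via convexity and the functional equation). Since $1/\zeta(3s)$ is holomorphic and bounded in $\Re(s) \geq 1$ (using a standard zero-free region for $\zeta$, or merely the holomorphy and non-vanishing on $\Re(s) = 1$ combined with the bound $\zeta(3s)^{-1} \ll \log|\Im(s)|$ there), the ratio $L(s, \pi \times \tilde{\pi})/\zeta(3s)$ is meromorphic on $\Re(s) > 0$ with only a simple pole at $s = 1$.

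To extract the partial sum bound, I would apply Perron's formula with a contour shift from $\Re(s) = 1 + \epsilon$ to $\Re(s) = 1/2 + \epsilon$, picking up the residue at $s = 1$, which contributes a term of size $O(X)$, and estimating the shifted line integral using the polynomial vertical-strip bounds trivially; alternatively, since all coefficients $|A_\pi(n_1, n_2)|^2$ are non-negative, Landau's theorem for Dirichlet series with positive coefficients applied directly to the identity above immediately yields
\begin{equation*}
\mathop{\sum \sum}_{n_1^2 n_2 \leq X} |A_\pi(n_1, n_2)|^2 \; \ll \; X^{1 + \epsilon}.
\end{equation*}

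The only nontrivial ingredient is the analytic continuation and polynomial growth of $L(s, \pi \times \tilde{\pi})$, which is a well-known consequence of Rankin--Selberg theory for $GL(3)$; this is where I expect the main conceptual weight of the argument to rest, though for the statement as formulated the non-negativity of coefficients lets us bypass any careful contour analysis via Landau's lemma. I would simply cite these standard results rather than reproducing them.
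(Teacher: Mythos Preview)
Your proposal is correct and is precisely the standard Rankin--Selberg argument underlying this well-known estimate. The paper itself offers no proof at all: it simply writes ``See \cite{XLI}'' and defers entirely to the literature. Your sketch therefore supplies exactly the content that the paper omits---the Dirichlet series identity
\[
\sum_{n_1,n_2\geq 1}\frac{|A_\pi(n_1,n_2)|^2}{(n_1^2 n_2)^s}=\frac{L(s,\pi\times\tilde\pi)}{\zeta(3s)},
\]
the analytic properties of $L(s,\pi\times\tilde\pi)$ from Jacquet--Piatetski-Shapiro--Shalika, and the non-negativity trick (Landau) to conclude convergence for $\Re(s)>1$, whence the trivial inequality $\sum_{n_1^2 n_2\leq X}|A_\pi(n_1,n_2)|^2\leq X^{1+\epsilon}\sum_{n_1,n_2}|A_\pi(n_1,n_2)|^2(n_1^2 n_2)^{-1-\epsilon}\ll_{\pi,\epsilon} X^{1+\epsilon}$ finishes the job. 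There is no divergence in approach to report; you have simply written out what the paper leaves as a black-box citation.
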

	\begin{proof}
		See \cite{XLI}. 
	\end{proof}

	\subsection{$GL(2)$  Voronoi summation formula} 
	\begin{lemma} \label{a4}
		Let $f\in S_k(D, \chi_{D})$  be a Hecke cusp form with Fourier coefficients $\lambda_f(n)$ and trivial Nebentypus $\chi_D$.   Let $a$  and  $c>0$ be coprime integers.  Let $D=D_1D_2$ with   $D_1 = (c, D)$ and $(D_1,D_2)=1$.   Let $F$ be a compactly supported smooth function supported on $[1,2]$. Then  we have 
		\begin{equation} 
			\sum_{m=1}^\infty \lambda_f (m) e\left( \frac{am}{c}\right) F(m) = \frac{1}{c}   \frac{\eta_f(D_2)}{\sqrt{D_2}} \sum_{m=1}^\infty \lambda_{f}( m) e\left(  \frac{-m\overline{a D_2}}{c}\right) H\left( \frac{m}{D_2 c^2}\right),
		\end{equation}
		where $ a \overline{a} \equiv 1 (\textrm{mod} \  c)$, $|\eta_f(D_2)|=1$ and 
		\begin{align*} 
			H (y)=   2 \pi i^k \int_0^\infty F(x)  J_{k-1} \left( 4\pi \sqrt{xy}\right) dx.
		\end{align*} 
	\end{lemma}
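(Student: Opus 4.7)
The plan is to derive the formula via the Mellin transform method, exploiting the functional equation of the additively twisted $L$-function attached to $f$.

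First I would express $F$ by Mellin inversion, $F(m)=\frac{1}{2\pi i}\int_{(\sigma)}\widetilde{F}(s)\,m^{-s}\,ds$ for $\sigma>1$, and interchange summation and integration to rewrite the left-hand side as
\begin{equation*}
\frac{1}{2\pi i}\int_{(\sigma)}\widetilde{F}(s)\,L_{f}\!\left(s,\tfrac{a}{c}\right)ds,\qquad L_{f}\!\left(s,\tfrac{a}{c}\right):=\sum_{m\geq 1}\frac{\lambda_{f}(m)\,e(am/c)}{m^{s}},
\end{equation*}
an absolutely convergent Dirichlet series by Deligne's bound.

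The heart of the proof is a functional equation for $L_{f}(s,a/c)$. Because $D=D_{1}D_{2}$ with $D_{1}=(c,D)$ and $(D_{1},D_{2})=1$, one can factor the additive twist across the Atkin--Lehner decomposition of $f$: the $D_{1}$-part of $e(a\cdot/c)$ is absorbed by the involution $W_{D_{2}}$ (which produces the pseudo-eigenvalue $\eta_{f}(D_{2})$ of modulus one), while the residual twist relative to $c$ is effectively a Dirichlet character of conductor coprime to $D_{2}$ and admits the standard functional equation. Combining these gives a relation of the shape
\begin{equation*}
L_{f}\!\left(s,\tfrac{a}{c}\right)=\frac{\eta_{f}(D_{2})}{\sqrt{D_{2}}}\,(D_{2}c^{2})^{1-2s}\,\gamma_{k}(s)\,L_{f}\!\left(1-s,-\tfrac{\overline{aD_{2}}}{c}\right),
\end{equation*}
with $\gamma_{k}(s)$ the expected ratio of Gamma factors associated to a weight-$k$ holomorphic form.

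I would then shift the contour from $\Re s=\sigma$ to $\Re s=1-\sigma$; no poles are crossed because $f$ is cuspidal. Substituting $s\mapsto 1-s$ and expanding the dual $L$-function back as a Dirichlet series, one recovers the claimed dual sum $\sum_{m}\lambda_{f}(m)\,e(-m\overline{aD_{2}}/c)$ tested against the kernel
\begin{equation*}
H(y)=\frac{1}{2\pi i}\int_{(\sigma')}\widetilde{F}(1-s)\,\gamma_{k}(s)\,y^{s}\,ds,
\end{equation*}
which by a classical Mellin--Barnes identity coincides with $2\pi i^{k}\int_{0}^{\infty}F(x)\,J_{k-1}(4\pi\sqrt{xy})\,dx$, as required.

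The main obstacle is the bookkeeping of the Atkin--Lehner decomposition in the presence of the nontrivial common divisor $D_{1}=(c,D)$: one must verify that the prefactor $\eta_{f}(D_{2})/\sqrt{D_{2}}$ is independent of $a\bmod c$, and that the dual additive character takes exactly the asserted form $-m\overline{aD_{2}}/c$ (noting that $(aD_{2},c)=1$ follows from $D_{1}=(c,D)$ and $(a,c)=1$). Once that careful analysis of the twist is settled, the rest is routine, and in practice one simply cites the standard $GL(2)$ Voronoi formula with level.
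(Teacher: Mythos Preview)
Your sketch is correct and outlines the standard Mellin--functional-equation proof of the $GL(2)$ Voronoi formula with level; the paper itself does not give a proof but simply cites Kowalski--Michel--VanderKam \cite{r3}. Since the lemma is quoted as a black box, your last sentence (``in practice one simply cites the standard $GL(2)$ Voronoi formula with level'') already matches what the paper does, and the preceding outline is a faithful summary of how that cited result is established.
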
 
	\begin{proof}
		See \cite{r3}.   
	\end{proof}
	We will  use some results related to the estimation of the exponential integral of the form: 
	\begin{equation} \label{eintegral}
		\mathfrak{I}= \int_a^b g(t) e(f(t)) dt,
	\end{equation} where $f$ and $g$ are  real valued smooth functions on the interval $[a, b]$. 
	
	\begin{lemma} \label{sdb}
		Let $f$ and $g$ be real valued twice differentiable functions and let $f^{\prime \prime} \geq r>0$ or  $f^{\prime \prime} \leq -r <0$, throughout the interval $[a, b]$. Let $g(x)/f^\prime(x)$ is monotonic and $|g(x)| \leq A$. Then we have
		
		\begin{align*}
			|\mathfrak{I}|\, \leq\, \frac{8A}{\sqrt{r}}. 
		\end{align*} 
	\end{lemma}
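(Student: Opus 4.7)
The plan is to establish this classical second derivative test (van der Corput's lemma) via a splitting argument combined with integration by parts. First, by passing to complex conjugates, I may assume without loss of generality that $f'' \geq r > 0$ throughout $[a,b]$. Then $f'$ is strictly increasing on $[a,b]$, so it vanishes at most once; denote by $c \in [a,b]$ this (possible) zero, and otherwise take $c$ to be the endpoint of $[a,b]$ nearest to where $|f'|$ is minimized. The mean value theorem combined with $f'' \geq r$ gives the pointwise lower bound $|f'(x)| \geq r|x-c|$.

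For a parameter $\delta > 0$ to be fixed at the end, I split
\[
\mathfrak{I} \;=\; \int_{[a,b]\cap[c-\delta,\,c+\delta]} g(x)\,e(f(x))\,dx \;+\; \int_{[a,b]\setminus[c-\delta,\,c+\delta]} g(x)\,e(f(x))\,dx \;=:\; \mathfrak{I}_1 + \mathfrak{I}_2.
\]
On the short interval the trivial bound $|\mathfrak{I}_1| \leq 2A\delta$ holds. On each connected subinterval $J$ comprising $\mathfrak{I}_2$, the endpoint bound $|f'| \geq r\delta$ is available, and one integrates by parts using $e(f(x)) = \frac{1}{2\pi i f'(x)}\frac{d}{dx}e(f(x))$ to get
\[
\int_J g(x)\,e(f(x))\,dx \;=\; \left[\frac{g(x)}{2\pi i f'(x)}\,e(f(x))\right]_{\partial J} \;-\; \frac{1}{2\pi i}\int_J \frac{d}{dx}\!\left(\frac{g(x)}{f'(x)}\right) e(f(x))\,dx.
\]
The boundary terms are bounded by $A/(\pi r\delta)$ by the pointwise lower bound on $|f'|$. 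For the remaining integral, the hypothesis that $g/f'$ is monotonic on $[a,b]$ means $(g/f')'$ keeps a constant sign on $J$, so its $L^1$-norm equals the difference of its endpoint values, again controlled by $O(A/(r\delta))$.

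Assembling the pieces yields $|\mathfrak{I}| \leq 2A\delta + CA/(r\delta)$ for an explicit absolute constant $C$, and balancing the two terms by choosing $\delta \asymp 1/\sqrt{r}$ produces the desired bound $|\mathfrak{I}| \leq 8A/\sqrt{r}$ after a direct numerical optimization. The only real care required is the bookkeeping in the splitting step: one must treat uniformly the possibilities that $c$ lies in the interior of $[a,b]$, coincides with an endpoint, or that $f'$ does not vanish at all (in which case $\mathfrak{I}_1$ may be empty), but no genuine obstacle arises. The essential input is the combination of convexity of $f$ (from the one-sided bound on $f''$) with the monotonicity of $g/f'$, which together make the integration by parts step clean.
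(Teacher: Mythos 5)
Your argument is correct and is essentially the classical van der Corput second-derivative test. The paper does not prove this lemma at all; it simply cites Titchmarsh, \emph{The Theory of the Riemann Zeta-Function}, Lemma~4.5, and your splitting-around-the-stationary-point plus integration-by-parts (equivalently, the second mean value theorem applied to the monotone factor $g/f'$) is precisely the argument in that reference. One minor point worth making explicit in the write-up: when $f'$ actually vanishes at an interior point $c$, the ratio $g/f'$ is undefined at $c$, so the monotonicity hypothesis must be read as ``monotonic on each connected component of $\{f'\ne 0\}$''; this is exactly the form in which you invoke it on the subintervals $J$, and it is the intended reading of the lemma.
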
 
	\begin{proof}
		See \cite[Lemma 4.5, page 72]{r4}. 
	\end{proof}

	\section{Set-up}
	
	\subsection{Approximate functional equation} 
	Let $\pi$ and $f$ be defined as in Theorem \ref{main theorem}. An application of the Functional equation of $L(1/2+it, \pi \times f)$  gives the following lemma.
	
	\begin{lemma}\label{AFE}
		Let $\mathcal{Q}=M^3 (1+|t|)^6$ be the analytic   conductor of  $L \left(1/2+it, \pi \times f \right)$. Then, as $\mathcal{Q} \rightarrow \infty$, we have
		\begin{align} \label{aproxi} 
			L \left( \frac{1}{2}+it, \, \pi \times f \right) \ll_{\pi, \, \varepsilon} \mathcal{Q}^{\varepsilon}\sum_{r \leq \mathcal{Q}^{(1+2\varepsilon)/4}}\frac{1}{r}  \sup_{ N\leq \frac{\mathcal{Q}^{1/2+\varepsilon}}{r^2}}\frac{|S_r(N)|}{N^{1/2}} +\mathcal{Q}^{-2023},
		\end{align}
		where
		\begin{align} \label{a28}
			S_r(N) = \mathop{\sum }_{n=1}^{\infty} A_{\pi}(r, n) \lambda_f(n)n^{-it}  V_1 \left(\frac{n}{N}\right),
		\end{align}
		for some smooth function $V_1$ supported in $[1,2]$ and  satisfying $V_1^{(j)}(x) \ll_{j} 1$.
	\end{lemma}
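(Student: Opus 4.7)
The plan is a routine application of the approximate functional equation (AFE). The Rankin--Selberg $L$-function $L(s, \pi \times f)$ is of degree six, entire of finite order under the assumptions on $\pi$ and $f$, and satisfies a functional equation of the shape $\Lambda(s, \pi \times f) = \varepsilon(\pi \times f)\, \Lambda(1-s, \tilde\pi \times \tilde f)$ with $|\varepsilon(\pi \times f)| = 1$. Applying Stirling's formula to the product of the six $\Gamma$-factors at $s = 1/2 + it$ confirms that the analytic conductor on the critical line is exactly $\mathcal{Q} = M^3(1+|t|)^6$, matching the normalization in the statement.

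First I would start from the contour integral representation
\[
L(1/2+it, \pi \times f) \;=\; \frac{1}{2\pi i} \int_{(2)} L(1/2+it+s, \pi \times f)\, G(s)\, \mathcal{Q}^{s/2}\, \frac{ds}{s},
\]
where $G$ is an even entire function with $G(0) = 1$ and rapid decay in vertical strips (e.g.\ $G(s) = e^{s^2}$). Shifting the contour to $\Re s = -2$, picking up the simple pole at $s = 0$, and invoking the functional equation transforms the shifted integral into a dual contribution of the same shape with $(\pi, f)$ replaced by $(\tilde\pi, \tilde f)$, up to an analytically negligible error.

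Next I would insert the Dirichlet series expansion
\[
L(s, \pi \times f) \;=\; \sum_{r=1}^\infty \sum_{n=1}^\infty \frac{A_\pi(r,n)\, \lambda_f(n)}{(r^2 n)^s}
\]
into the $\Re s = 2$ integral and swap summation and integration to obtain
\[
\sum_{r, n \geq 1} \frac{A_\pi(r,n)\, \lambda_f(n)\, n^{-it}}{r\, n^{1/2}}\, W\!\left(\frac{r^2 n}{\mathcal{Q}^{1/2}}\right),
\]
where $W(x) = \frac{1}{2\pi i}\int_{(2)} G(s)\, x^{-s}\, s^{-1}\, ds$ is smooth with $W(0) = 1$ and decays faster than any polynomial as $x \to \infty$. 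The rapid decay truncates the sum to $r^2 n \leq \mathcal{Q}^{1/2 + \varepsilon}$ with remainder $O(\mathcal{Q}^{-2023})$. A smooth dyadic partition of unity in $n$ followed by the triangle inequality in $r$ then repackages the main term as
\[
\mathcal{Q}^\varepsilon \sum_{r \leq \mathcal{Q}^{(1+2\varepsilon)/4}} \frac{1}{r} \sup_{N \leq \mathcal{Q}^{1/2+\varepsilon}/r^2} \frac{|S_r(N)|}{N^{1/2}},
\]
matching the claimed bound. The dual contribution has identical structure with $(\pi, f) \leftrightarrow (\tilde\pi, \tilde f)$, and since $|A_{\tilde\pi}(r,n)| = |A_\pi(n,r)|$ and $|\lambda_{\tilde f}(n)| = |\lambda_f(n)|$, after a harmless renaming of summation indices it is absorbed into the same estimate.

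There is no substantive obstacle; the argument is entirely standard. The only points requiring genuine attention are (i) the precise computation of the archimedean conductor via Stirling on the six $\Gamma$-factors at $s = 1/2 + it$, which produces the factor $(1+|t|)^6$, and (ii) verifying that the tail of the smooth cutoff $W$ beyond $r^2 n = \mathcal{Q}^{1/2+\varepsilon}$ is small enough to be absorbed into the $\mathcal{Q}^{-2023}$ error, which follows from choosing $A$ sufficiently large in the decay estimate for $W$.
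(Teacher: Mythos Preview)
Your proposal is correct and is precisely the standard approximate functional equation argument that the paper invokes by citing \cite[Theorem 5.3]{ana}; the paper gives no further details beyond that reference. You have simply unpacked what that theorem says and adapted it to the present Dirichlet series, which is exactly what is needed.
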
 
	\begin{proof}
		See \cite[Theorem 5.3]{ana}.  
	\end{proof}
	\subsection{Applying DFI delta method} 
	Following Munshi's method \cite{r1}, our first step is to separate  the oscillations of $A_{\pi}(r, n)$ and $\lambda_{f}(n)\,n^{-it}$  using DFI delta  method \eqref{a1}. To this end we  write $S_r(N)$ as
	
	\begin{align*}
		S_r(N)&=\mathop{\mathop{\sum\sum}_{\substack{n, \,m =1 }}^{\infty}} \delta{(n, m)}\,A_{\pi}(r, n)m^{-it}\,\lambda_f(m)\,V_1\left(\frac{n}{N}\right)\,V_2\left(\frac{m}{N}\right),
	\end{align*}
	where $V_2$ is a smooth function supported on $[1/2,5/2]$ with $V_2^{(j)}(x) \ll_j 1$.
	Next, we apply arithmetic and analytic conductor lowering trick following Munshi  \cite{RM1}, \cite{s4} by introducing an integral and a congruence condition.   Thus we see that  
	\begin{align*}
		S_r(N) =	\frac{1}{K}\,\int_{\mathbb{R}}\,V_1\left(\frac{\nu}{K}\right) \mathop{\mathop{\sum\sum}_{\substack{n, \,m =1 }}^{\infty}}\delta{(n, m)}\, A_{\pi}(r, n)m^{-it}\,\lambda_f(m)\,\left(\frac{n}{m}\right)^{i\nu}\,V_1\left(\frac{n}{N}\right)\,V_2\left(\frac{m}{N}\right)\,d\nu,
	\end{align*}
	where $t^{\epsilon} < K < t^{1-\epsilon}$ is a parameter which  will be determined optimally later.  On applying integration by parts to the $\nu$-integral,  we see that it is negligibly small unless $|n-m| \ll N/K$.   We further  detect the equation $n=m$ as follows
	\begin{align*}
		n =m \,\, \iff\,\, n \equiv m \ \mathrm{mod} \ p_1,   \,\, \frac{n-m}{p_1} =0.
	\end{align*}
	On applying  the Fourier expansion   \eqref{a1} of $\delta{(n, m)}$ with $Q = \sqrt{N/p_1K}$. We see that,  $S_r(N)$, up to a negligible error term, is given by
	\begin{align}\label{a7}
		\notag&\,\frac{1}{KQp_1}\displaystyle\int_{\mathbb{R}} W(u)\,\int_{\mathbb{R}}\, V_1\left(\frac{\nu}{K}\right)\, \displaystyle\sum_{1\leq q\leq Q}\frac{\psi(q,u)}{q} \sideset{}{^\star}{\displaystyle\sum}_{a \, \mathrm{mod} \, q}\,\,\sum_{b \, \mathrm{mod} \, p_1}\\
		&\times \sum_{n=1}^{\infty}A_{\pi}(r,n)e\left(\frac{(a+bq)n}{p_1q }\right) \,v_1(n)  \sum_{m=1}^{\infty}\lambda_f (m)\,e\left(-\frac{(a+bq)m}{p_1q }\right)\,v_2(m)\, \mathrm{d}u\,\mathrm{d}\nu,
	\end{align}
	where $W$ is a compactly supported smooth function supported on $[-2X^{\epsilon}, 2X^{\epsilon}]$ with $W(u) = 1$ on $[-X^{\epsilon}, X^{\epsilon}]$, satisfying $W^{(j)}(u) \ll_j 1$ and
	\begin{align}\label{a100}
		v_1(n) = n^{i\nu}\,e\left(\frac{nu}{p_1qQ}\right) V_1\left( \frac{n}{N}\right),  \hspace{0.5cm} v_2(m) =   m^{-i(t+\nu)}\,e\left(\frac{-mu}{p_1qQ}\right) V_2\left( \frac{m}{N}\right).
	\end{align}
	
	We consider  the following  cases:
	\begin{align*}
		&\textbf{Case 1}:\,\,  (p_1, q) =1\,\, \text{and}\,\, b \equiv 0 \bmod p_1\\
		&\textbf{Case 2}:\,\,  (p_1, q) =1\,\, \text{and}\,\, b \not\equiv 0 \bmod p_1\\
		&\textbf{Case 3}:\,\,  (p_1, q) \neq 1\,.
	\end{align*} 
	In Case 2, we make a change of variable $a \rightarrow ap_1$. Thus we combine the sum over $a$ and $b$ as follows: 
	\[ \sideset{}{^\star}{\displaystyle\sum}_{a \, \mathrm{mod} \, q}  \,\, \, \sideset{}{^\star}{\displaystyle\sum}_{b \, \mathrm{mod} \, p_1} \cdots\,=\, \sideset{}{^\star}{\displaystyle\sum}_{c \, \mathrm{mod} \, qp_1} \cdots  \]
	In Case 3, we write  $q = p^{\ell}_1 q'$ with $\ell >  0$ and $(p_1, q') = 1$, and we decompose the sum over $a$ as follows: 
	\[\sideset{}{^\star}{\displaystyle\sum}_{a \, \mathrm{mod} \, q} \sum_{b \, \mathrm{mod} \, p_1}\cdots=\sideset{}{^\star}{\displaystyle\sum}_{a_1 \, \mathrm{mod} \, q'}\,  \sideset{}{^\star}{\displaystyle\sum}_{a_2 \, \mathrm{mod} \, p_1^\ell} \, \sum_{b \, \mathrm{mod} \, p_1}\cdots \]
	Accordingly we write $a+bq=a_1 p^{\ell}_1 + a_2q'+b p^{\ell}_1 q'=a_1p^{\ell}_1+(a_2+bp^{\ell}_1) q'$. We combine the sum over $a_2$ and $b$ to form a new variable $u=a_2+bp^\ell$, where $u$ runs over all the classes mod $p^{1+\ell}$ coprime to $p_1$.  Thus, we see that 
	\[\sideset{}{^\star}{\displaystyle\sum}_{a \, \mathrm{mod} \, q} \sum_{b \, \mathrm{mod} \, p_1}\cdots=\sideset{}{^\star}{\displaystyle\sum}_{a_1 \, \mathrm{mod} \, q'}\,  \sideset{}{^\star}{\displaystyle\sum}_{u \, \mathrm{mod} \, p_1^{1+\ell}}  \cdots = \sideset{}{^\star}{\displaystyle\sum}_{c \, \mathrm{mod} \, q'p_1^{1+\ell}}  \cdots\] 
	Thus we arrive at the following expression: 
	\begin{align*}
		S_r(N) \,=\, \mathcal{S}_1 + \mathcal{S}_2,
	\end{align*}
	where 
	\begin{align}\label{hb1}
		\notag \mathcal{S}_1 =& \,\frac{1}{KQp_1}\displaystyle\int_{\mathbb{R}} W(u)\,\int_{\mathbb{R}}\, V_1\left(\frac{\nu}{K}\right)\, \displaystyle\sum_{1\leq q\leq Q}\frac{\psi(q,u)}{q}\sideset{}{^\star}{\displaystyle\sum}_{a \, \mathrm{mod} \, q}\\
		&\times \, \sum_{n=1}^{\infty}A_{\pi}(r,n)e\left(\frac{an}{q }\right) \,v_1(n)\,\sum_{m=1}^{\infty}\lambda_f (m)\,e\left(-\frac{am}{q }\right)\,v_2(m)\, \mathrm{d}u\,\mathrm{d}\nu,
	\end{align}
	and 
	\begin{align}\label{hb2}
		\notag \mathcal{S}_2 =& \,\frac{1}{KQp_1}\displaystyle\int_{\mathbb{R}} W(u)\,\int_{\mathbb{R}}\, V_1\left(\frac{\nu}{K}\right)\, \displaystyle\sum_{1\leq q\leq Q}\frac{\psi(q,u)}{q} \sideset{}{^\star}{\displaystyle\sum}_{c \, \mathrm{mod} \, p_1^{1+\ell}q' }\,\\
		&\times \, \sum_{n=1}^{\infty}A_{\pi}(r,n)e\left(\frac{c n}{p_1q }\right) \,v_1(n)\,\sum_{m=1}^{\infty}\lambda_f (m)\,e\left(-\frac{cm}{p_1q }\right)\,v_2(m)\, \mathrm{d}u\,\mathrm{d}\nu,
	\end{align}
	where $q=q'p^\ell$, $\ell \geq 0$. 
	We will analyse $ \mathcal{S}_1$ in Section \ref{S1}.  Next, we focus on $ \mathcal{S}_2$. 
		\section{Application of  summation formulae}  In this section, we will apply  $GL(2)$ and $GL(3)$-Voronoi summation formulae to the sum over $m$ and $n$ respectively.
		
		Let
		\begin{align}\label{ns}
			\mathcal{Z}_1 = \, \sum_{n=1}^{\infty}A_{\pi}(r,n)e\left(\frac{cn }{p_1q }\right) \,v_1(n),    
		\end{align}
		
		\begin{align}\label{ms}
			\mathcal{Z}_2 = \, \sum_{m=1}^{\infty}\lambda_f (m)\,e\left(-\frac{c m}{p_1q }\right)\,v_2(n) ,    
		\end{align}
		where  $v_1(n)$ and $v_2(m)$ are as in  \eqref{a100}.

		\begin{lemma}\label{a8}
			Let $\mathcal{Z}_1$ be as in \eqref{ns}. Then we have 
			\begin{align*}
				\mathcal{Z}_1 = \frac{N^{2/3+i\nu}}{p_1qr^{2/3}} \sum_{\pm} \sum_{n_{1}|p_1qr}\,n^{1/3}_1 \sum_{n_{2}\ll N_0/n^2_1}  \frac{A_{\pi}(n_{2},n_{1})}{ n^{1/3}_{2}} \, S\left( r\overline{c}, \pm n_{2}; p_1qr/n_{1}\right)  \mathcal{I}^{\pm}_1&(n^2_1n_2, u; q) \\
				&+ O(N^{-2024}),
			\end{align*} where
			$$N_0 =  \,{\max}\left\{\frac{(p_1qK)^3r}{N} , N^{1/2} (p_1K)^{3/2} r\right\}N^{\epsilon},$$ and 
			\begin{align*}
				\mathcal{I}^{\pm}_1(n^2_1n_2, u; q) =  \int_{0}^\infty U_1(z) z^{i\nu}\, e\left(\frac{Nuz}{p_1qQ} \pm  \frac{3(Nz n_1^2 n_2)^{1/3}}{p_1q r^{1/3}} \right)dz ,
			\end{align*}
			with $U_1$  a new smooth bump function. 
		\end{lemma}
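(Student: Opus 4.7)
The plan is to apply Lemma \ref{gl3voronoi} directly to the sum $\mathcal{Z}_1$, extract the leading oscillation of the integral kernel $G_\pm$ via Lemma \ref{GL3oscilation}, and then truncate the dual variable $n_2$ by repeated integration by parts. Since the character $c$ appearing in $\mathcal{Z}_1$ is coprime to $p_1 q$ (from the decomposition in the preceding section), applying the Voronoi formula with $m = r$, modulus $p_1 q$, residue class $c$ and test function $\psi = v_1$ yields
\begin{equation*}
\mathcal{Z}_1 = p_1 q \sum_{\pm} \sum_{n_1 \mid p_1 q r} \sum_{n_2 \geq 1} \frac{A_\pi(n_2, n_1)}{n_1 n_2}\, S\!\left(r\overline c,\, \pm n_2;\, \tfrac{p_1 q r}{n_1}\right) G_\pm\!\left(\frac{n_1^2 n_2}{(p_1 q)^3 r}\right).
\end{equation*}

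Next, I apply Lemma \ref{GL3oscilation} to $g = v_1$, viewed as a bump at scale $N$. After the substitution $y = N z$, the leading $j = 1$ term gives
\begin{equation*}
G_\pm\!\left(\tfrac{n_1^2 n_2}{(p_1 q)^3 r}\right) = \left(\frac{n_1^2 n_2}{(p_1 q)^3 r}\right)^{2/3} N^{2/3 + i \nu}\, \mathcal{I}_1^\pm(n_1^2 n_2, u; q),
\end{equation*}
where the new smooth bump $U_1(z)$ absorbs the factor $V_1(z) z^{-1/3}$ together with the Mellin--Barnes constants $c_1(\pm)$ and $d_1(\pm)$. Higher-order terms $j \geq 2$ carry additional decay $((N n_1^2 n_2)/((p_1 q)^3 r))^{-(j-1)/3}$ and can therefore be folded into $U_1$, while the tail error $O((\cdot)^{(-K+5)/3})$ from truncating the expansion is negligible for $K$ taken large enough. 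Substituting back into the Voronoi formula and rearranging powers of $n_1, n_2, p_1 q, r$ produces precisely the prefactor $N^{2/3 + i \nu}/(p_1 q r^{2/3})$ and the weight $n_1^{1/3}/n_2^{1/3}$ appearing in the lemma.

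The final step is the truncation of the $n_2$-sum. The total phase inside $\mathcal{I}_1^\pm$ reads
\begin{equation*}
\phi(z) = \nu \log z + \frac{N u z}{p_1 q Q} \pm \frac{3 (N z n_1^2 n_2)^{1/3}}{p_1 q r^{1/3}}.
\end{equation*}
On the support $z \asymp 1$, the derivatives of the first two terms have sizes $O(K)$ and $O(N^{1+\epsilon}/(p_1 q Q))$, respectively, while the cube-root term contributes $(N n_1^2 n_2)^{1/3}/(p_1 q r^{1/3})$ to $\phi'(z)$. Whenever this cube-root contribution exceeds $N^\epsilon \max\{K,\, N/(p_1 q Q)\}$, repeated integration by parts in $z$ against the oscillatory factor $e(\pm 3 (N z n_1^2 n_2)^{1/3}/(p_1 q r^{1/3}))$ makes $\mathcal{I}_1^\pm$ arbitrarily small, yielding the claimed $O(N^{-2024})$ error. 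Solving the two resulting inequalities for $n_1^2 n_2$ and using $q \leq Q$ together with the identity $p_1 Q^2 = N/K$ produces exactly the two branches $(p_1 q K)^3 r/N$ and $N^{1/2} (p_1 K)^{3/2} r$ whose maximum defines $N_0$.

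The main technical obstacle lies in this last step: each integration by parts must be arranged so that the non-oscillatory weight $U_1(z) z^{i \nu - 1/3}$ loses at most $N^\epsilon$ under differentiation, while the dominant phase term gains at least an $N^\epsilon$ factor, and this gain must hold uniformly in $q \in [1, Q]$. The case split hidden inside the $\max$ in the definition of $N_0$ is precisely what secures this uniform gain, since in each of the two sub-regimes one of the comparison scales $K$ or $N/(p_1 q Q)$ is beaten by the cube-root derivative with an $N^\epsilon$ margin.
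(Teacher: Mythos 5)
Your proposal is correct and matches the paper's own (deliberately terse) proof: apply the $GL(3)$ Voronoi formula of Lemma \ref{gl3voronoi} with modulus $p_1q$, residue class $c$, and test function $v_1$; extract the leading $j=1$ oscillation from the kernel $G_\pm$ via Lemma \ref{GL3oscilation} after the change of variable $y = Nz$; and obtain the truncation $n_1^2 n_2 \ll N_0$ by repeated integration by parts in $z$ against the cube-root phase. Your identification of the two comparison scales $K$ (from $z^{i\nu}$) and $N/(p_1qQ)$ (from $e(Nuz/p_1qQ)$) as the sources of the two branches in $N_0$, together with the use of $p_1Q^2 = N/K$, spells out precisely what the paper leaves implicit in the sentence beginning ``The dual $n_2$-length\dots''.
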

		\begin{proof}
			On applying the $GL(3)$ Voronoi formula  (see  Lemma \ref{gl3voronoi})  along with Lemma \ref{GL3oscilation}  to $\mathcal{Z}_1$ gives the lemma. Note that we take $K$ sufficiently large enough in Lemma a \ref{GL3oscilation} so that the error term is negligibly small. We work with $j=1$ as the analysis for other terms is similar and gives even better result.  Indeed  for $j \geq 2$, we still have the same phase function $e(3(xy)^{1/3})$ however we get the extra $x^{(j-1)/3}$ factor in the denominator which gives extra savings.  The dual $n_2$-length $n_1^2n_2 \ll N_0$ is obtained by applying integration by parts to $\mathcal{I}^{\pm}_1(n^2_1n_2, u; q)$. 
		\end{proof}
		Next we apply  $GL(2)$-Voronoi  formula  to $\mathcal{Z}_2$.
		\begin{lemma}\label{a9}
			Let $\mathcal{Z}_2$ be as  in  \eqref{ms}. Then we have 
			\begin{align*}
				\mathcal{Z}_2 = \frac{ N^{3/4-i(t+\nu)}\,\eta_f(D_2)}{(p_1q)^{1/2}\,D_2^{1/4}}  \sum_{\pm} \sum_{m \ll M_0} \frac{\lambda_f(m)}{m^{1/4}}  e\left(-m  \frac{\overline{c D_2}}{p_1q}\right) \mathcal{I}^{\pm}_2(m,u; q) + O(N^{-2024}),
			\end{align*} where $$M_0 = \,{\max}\left\{\frac{(p_1q t)^{2}D_2}{N }, \,  p_1D_2K\right\}N^{\epsilon},$$ 
			$D_2=p_2/(q',p_2)$, $q=p^\ell q'$ 
			and 
			\begin{align}\label{l1}
				\mathcal{I}^{\pm}_2(m,u; q)=	 \,\int_0^ \infty U_2(y)\,y^{-i(t+\nu)}\, e\left(\frac{-N uy}{p_1qQ} \pm \frac{2\sqrt{mNy}}{p_1q \sqrt{D_2}} \right)dy,
			\end{align}
			with $U_2$ being a new smooth bump function. 
		\end{lemma}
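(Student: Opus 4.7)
The plan is to apply the $GL(2)$ Voronoi summation formula of Lemma \ref{a4} to the sum over $m$ in $\mathcal{Z}_2$, taking the modulus to be $c' := p_1 q$ and the coefficient to be $a \equiv -c \pmod{p_1 q}$. Writing $q = p_1^{\ell} q'$ with $(p_1, q') = 1$ (so $p_1 q = p_1^{1+\ell} q'$), the required divisor decomposition of $M = p_1 p_2$ is $D_1 = (p_1 q,\, p_1 p_2) = p_1 (q', p_2)$ and $D_2 = p_2/(q', p_2)$, exactly as in the statement. Since $\overline{(-c)} \equiv -\overline{c} \pmod{p_1 q}$, the exponential $e(-m\, \overline{a D_2}/(p_1 q))$ produced by Voronoi equals $e(-m\, \overline{c D_2}/(p_1 q))$ up to an overall sign, which can be absorbed into the $\pm$ summation.

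With $v_2(x)$ playing the role of the test function $F(x)$, the change of variable $x = Ny$ turns the Bessel transform into
\begin{equation*}
H\!\left(\frac{m}{D_2 (p_1 q)^2}\right) = 2\pi i^k\, N^{1-i(t+\nu)} \int_0^\infty V_2(y)\, y^{-i(t+\nu)}\, e\!\left(-\frac{N u y}{p_1 q Q}\right) J_{k-1}\!\left(\frac{4\pi \sqrt{N m y}}{p_1 q \sqrt{D_2}}\right) dy.
\end{equation*}
In the range where the Bessel argument $A := 4\pi\sqrt{Nmy}/(p_1 q \sqrt{D_2})$ is $\gg N^{\epsilon}$, I substitute the standard asymptotic $J_{k-1}(A) = \sum_{\pm} c_\pm\, A^{-1/2}\, e(\pm A/(2\pi)) + O(A^{-3/2})$. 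The $A^{-1/2}$ factor yields $(p_1 q \sqrt{D_2})^{1/2}/(Nmy)^{1/4}$; multiplying by the Voronoi prefactor $\eta_f(D_2)/(p_1 q \sqrt{D_2})$ and collecting the $m$- and $N$-powers produces exactly the factor $\eta_f(D_2)\, N^{3/4-i(t+\nu)}/\bigl((p_1 q)^{1/2} D_2^{1/4}\bigr) \cdot m^{-1/4}$ claimed in the lemma, while the residual $y$-integral, after absorbing $y^{-1/4}$ into $U_2(y) := c_\pm\, y^{-1/4} V_2(y)$, is precisely $\mathcal{I}_2^{\pm}(m,u;q)$ as defined in \eqref{l1}.

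It remains to truncate the dual $m$-sum at $m \ll M_0$ modulo an $O(N^{-2024})$ error. For this I analyze the phase
\begin{equation*}
\Phi(y) = -\frac{t+\nu}{2\pi}\log y \;-\; \frac{N u y}{p_1 q Q} \;\pm\; \frac{2\sqrt{m N y}}{p_1 q \sqrt{D_2}}
\end{equation*}
of $\mathcal{I}_2^{\pm}$ on the support $y \asymp 1$, $|u| \ll N^{\epsilon}$. Using $Q^2 = N/(p_1 K)$ one sees the $u$-term has size $\ll K N^{\epsilon}$, the log term has size $\asymp t$, and the Bessel term has size $\sqrt{mN/D_2}/(p_1 q)$. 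When the Bessel frequency dominates both $t$ and $K$, repeated integration by parts drives $\mathcal{I}_2^{\pm}$ to $O(N^{-2024})$; this forces $m \ll (p_1 q t)^2 D_2/N \cdot N^{\epsilon}$, matching the first term in $M_0$. The complementary threshold $p_1 D_2 K$ covers the small-$A$ regime, where the Bessel asymptotic is inadmissible and one instead invokes the trivial bound $|J_{k-1}(x)| \ll x^{k-1}$ together with the smoothness of $V_2$ to obtain a negligible tail. The main obstacle here is the careful bookkeeping of these two regimes so as to package the truncation uniformly as $M_0 = \max\{(p_1 q t)^2 D_2/N,\, p_1 D_2 K\}\, N^{\epsilon}$; no new arithmetic input is required beyond Lemma \ref{a4} and the standard Bessel asymptotics.
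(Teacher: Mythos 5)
Your proposal is correct and follows essentially the same route as the paper: apply the $GL(2)$ Voronoi formula of Lemma \ref{a4} with $c = p_1 q$, $D = p_1 p_2$ (so $D_1 = p_1(q',p_2)$, $D_2 = p_2/(q',p_2)$), change variables $y \mapsto Ny$, peel off the Bessel oscillations, collect the $m$-- and $N$--powers to obtain the stated prefactor and $\mathcal{I}^{\pm}_2$, and truncate the dual $m$-sum by repeated integration by parts. One slip in the bookkeeping at the end: the threshold $p_1 D_2 K$ does \emph{not} come from the small--Bessel-argument regime --- that regime is $m \ll (p_1 q)^2 D_2/N \ll p_1 D_2/K$, a factor of roughly $K^2$ smaller and trivially contained in $M_0$ --- but from the very same integration-by-parts comparison you already wrote: the Bessel phase frequency $\sqrt{mN/D_2}/(p_1 q)$ must exceed the $u$-phase frequency $N u/(p_1 q Q) \asymp KQ/q$; with $Q^2 = N/(p_1 K)$ the $q$'s cancel and this gives exactly $m \gg p_1 D_2 K$. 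Your clause ``dominates both $t$ and $K$'' already encodes this, so the conclusion should read ``forces $m \ll M_0$,'' and no separate small-argument argument is needed to produce the second entry of $M_0$. (Also note that to keep the Voronoi error genuinely negligible one should use the exact oscillatory decomposition $J_{k-1}(4\pi x) = (4\pi x)^{-1/2}\big(e(2x)U_{k-1}(4\pi x) + e(-2x)\overline{U_{k-1}}(4\pi x)\big)$, as the paper does, rather than a fixed-order asymptotic; otherwise the $O(A^{-3/2})$ tail contributes a lower-order but non-negligible term that must be tracked or pushed to higher order.)
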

		\begin{proof}
			On applying the $GL(2)$ Voronoi summation formula (see Lemma \ref{a4}) with $c = p_1q$ and $D = p_1p_2 = D_1 D_2$, where $D_1 = (p_1q, p_1p_2)$, $D_2 = D/D_1=p_2/(p_2, q)$ to  $\mathcal{Z}_2$, we obtain
			\begin{align}\label{l3}
				\mathcal{Z}_2 &= \frac{1}{p_1q}   \frac{\eta_f(D_2)}{\sqrt{D_2}} \sum_{m=1}^\infty \lambda_{f}( m) e\left(-m  \frac{\overline{L_{p_1, q} D_2}}{p_1q}\right) \mathcal{V}_2\left( \frac{m}{D_2 (p_1q)^2}\right), 
			\end{align} 
			where 
			\begin{align*}
				\mathcal{V}_2\left( \frac{m}{D_2(p_1q)^2}\right) = 2\pi i^k \int_0^ \infty y^{-i(t+\nu)}\,e\left(\frac{-yu}{p_1qQ}\right) V_2\left( \frac{y}{N}\right)    J_{k-1} \left(  \frac{4\pi}{p_1q}\sqrt{\frac{my}{D_2}}  \right) \ { d} y. 
			\end{align*} 
			%
			%
			Changing the variable $y \longrightarrow Ny$, and extracting the oscillations of the Bessel function using 
			
			$$ J_{k-1} (4 \pi x) = \frac{1}{\sqrt{4\pi x}}\left(e(2x)\,U_{k-1} (4\pi x)  + e(-2x)\,\overline{U_{k-1}} (4\pi x)\right),$$ where $U_{k-1}$ is a smooth function with  $$x^{j} U_{k-1}^{(j)}(x) \ll_{j} \frac{1}{\sqrt{x}}, \ \ j \geq 0, \ x\gg 1,$$ 
			we get 
			\begin{align}\label{l2}
				\mathcal{V}_2\left( \frac{m}{D_2(p_1q)^2}\right) = \,\frac{D_2^{1/4}(p_1q)^{1/2}}{m^{1/4}} \frac{N^{3/4}}{N^{i(t+\nu)}} \,\sum_{\pm} \mathcal{I}^{\pm}_2(m,u; q),  
			\end{align}
			where  $\mathcal{I}^{\pm}_2(m,u; q)$ is defined in  \eqref{l1}. The dual length $m \ll M_0$ is determined by applying integration by parts to $ \mathcal{I}^{\pm}_2 (m,u; q)$. By plugging the value of $\mathcal{V}_2\left( \frac{m}{D_2(p_1q)^2}\right)$ from  \eqref{l2} to  \eqref{l3} we get the lemma.
		\end{proof}
		We conclude this section by recording the analysis done so far in the next lemma. 
		
		\begin{lemma}\label{a10}
			We have
			\begin{align*}
				\notag \mathcal{S}_2   =& \frac{ N^{17/12 -it}\,\eta_f(D_2)}{KQ\,p_1^{5/2}\,D_2^{1/4}\,r^{2/3}}\,\displaystyle\sum_{1\leq q\leq Q}\,\frac{1}{q^{5/2}}\sum_{\pm}\sum_{n_{1}|p_1q r}\,n^{1/3}_1 \sum_{n_{2}\ll N_0/n^2_1}  \frac{A_{\pi}(n_{2},n_{1})}{ n^{1/3}_{2}}\sum_{\pm}\sum_{m \ll M_0} \frac{\lambda_f(m)}{m^{1/4}}\\
				&\times\,\, \mathcal{C}^{\pm}_2(n^2_1n_2,m; q)\,\mathcal{J}^{\pm,\pm}_2(n^2_1n_2,m; q),
			\end{align*}
			where
			\begin{align*}
				\mathcal{J}^{\pm,\pm}_2(n^2_1n_2,m; q) := \displaystyle\int_{\mathbb{R}}\,V_1\left(\frac{\nu}{K}\right)\, \displaystyle\int_{\mathbb{R}} W(u)\,\psi(u,q)\,	\mathcal{I}^{\pm}_1(n^2_1n_2,u; q)\,\mathcal{I}^{\pm}_2(m,u; q)\,du\,d\nu,
			\end{align*}
			and   
			\begin{align*}
				\mathcal{C}^{\pm}_2(n^2_1n_2,m; q) := \sideset{}{^\star}{\displaystyle\sum}_{c \, \mathrm{mod} \, p^{1+\ell}q'}\,\,S\left( r\overline{c}, \pm n_{2}; p_1 qr/n_{1}\right)\,e\left(-m  \frac{\overline{c D_2}}{p_1q}\right).
			\end{align*} 
		\end{lemma}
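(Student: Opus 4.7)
The plan is to prove Lemma \ref{a10} by direct substitution of the Voronoi-transformed expressions from Lemmas \ref{a8} and \ref{a9} into the formula \eqref{hb2} for $\mathcal{S}_2$, and then reorganising the resulting expression by separating the character sum, the integral transforms, and the $q$-sum.

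First, I would rewrite the inner double sum in \eqref{hb2} as $\mathcal{Z}_1 \cdot \mathcal{Z}_2$, where $\mathcal{Z}_1$ and $\mathcal{Z}_2$ are exactly the sums defined in \eqref{ns} and \eqref{ms}. The key observation is that the variable $c$ (running over residues coprime to $p_1^{1+\ell}q'$) appears in $\mathcal{Z}_1$ only through the exponential phase $e(cn/p_1q)$ and in $\mathcal{Z}_2$ only through the phase $e(-cm/p_1q)$, while the $\nu$-integral acts on $n^{i\nu}$ in $v_1$ and $m^{-i\nu}$ in $v_2$, and the $u$-integral acts on the phases $e(\pm nu/p_1qQ)$ and $e(\mp mu/p_1qQ)$ inside $v_1$ and $v_2$.

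Next, I would apply Lemma \ref{a8} to $\mathcal{Z}_1$ and Lemma \ref{a9} to $\mathcal{Z}_2$, ignoring the negligible $O(N^{-2024})$ error terms which contribute only negligibly after summing over $q \leq Q$ and integrating over the bounded $u$- and $\nu$-regions. Substitution yields a product of: the character-dependent factors $S(r\bar{c}, \pm n_2; p_1 qr/n_1)$ and $e(-m\overline{cD_2}/p_1q)$, which combine into $\mathcal{C}^{\pm}_2(n_1^2 n_2, m; q)$ after collecting the sum over $c$; the $u,\nu$-dependent factors $\mathcal{I}_1^{\pm}(n_1^2 n_2, u; q)$ and $\mathcal{I}_2^{\pm}(m, u; q)$, which combine with the weight $\psi(q,u) W(u) V_1(\nu/K)$ into $\mathcal{J}_2^{\pm,\pm}(n_1^2 n_2, m; q)$; and an explicit prefactor to be collected below.

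Finally, I would assemble the constants: the factor $1/(KQp_1)$ from \eqref{hb2}, the factor $N^{2/3+i\nu}/(p_1 q r^{2/3})$ from Lemma \ref{a8}, and the factor $N^{3/4-i(t+\nu)} \eta_f(D_2)/((p_1q)^{1/2} D_2^{1/4})$ from Lemma \ref{a9}. The $N^{i\nu}$ from $\mathcal{Z}_1$ cancels with the $N^{-i\nu}$ from $\mathcal{Z}_2$, leaving $N^{2/3+3/4-it} = N^{17/12-it}$; the $p_1$ factors combine to $p_1^{1+1+1/2} = p_1^{5/2}$; and the $q$-powers from the prefactor $1/q$ in \eqref{hb2}, from $1/(p_1 q)$ in Lemma \ref{a8}, and from $1/(p_1 q)^{1/2}$ in Lemma \ref{a9}, combine to $1/q^{5/2}$. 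This gives exactly the stated formula. There is no substantial analytic obstacle here — the whole lemma is a bookkeeping step that collects the preceding Voronoi transformations into a single clean expression ready for the Cauchy–Schwarz and Poisson analysis of the next section.
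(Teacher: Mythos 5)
Your proposal is correct and is essentially the same (and only) route available: Lemma~\ref{a10} is a pure bookkeeping step that records the result of inserting Lemmas~\ref{a8} and~\ref{a9} into \eqref{hb2}, and the paper itself offers no separate proof beyond the remark that it ``records the analysis done so far.'' Your arithmetic on the prefactors ($N^{2/3+3/4-it}=N^{17/12-it}$, $p_1^{-1}\cdot p_1^{-1}\cdot p_1^{-1/2}=p_1^{-5/2}$, $q^{-1}\cdot q^{-1}\cdot q^{-1/2}=q^{-5/2}$) and your observation that the $N^{\pm i\nu}$ factors cancel before the $\nu$-integration (while the $\nu$-dependence of $\mathcal{I}_1^\pm,\mathcal{I}_2^\pm$ is absorbed into $\mathcal{J}_2^{\pm,\pm}$) both check out, and your handling of the $O(N^{-2024})$ cross terms is the right level of care even though the lemma statement omits an explicit error term.
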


		%
		%
		\vspace{0.3cm}	
		\section{Simplification of Integrals}
		On plugging the expression of  $	\mathcal{I}^{\pm}_1(n^2_1n_2, u; q)$ and $\mathcal{I}^{\pm}_2(m, u ; q)$ from   Lemma \ref{a8} and  Lemma \ref{a9} in $\mathcal{J}^{\pm,\pm}_2(n^2_1n_2,m; q)$ ,  we get 
		\begin{align}\label{a12}
			\mathcal{J}^{\pm,\pm}_2(n^2_1n_2,m; q) =& \, \displaystyle\int_{\mathbb{R}}\,V_1\left(\frac{\nu}{K}\right)\,\int_{\mathbb{R}}\, W(u)\,\psi(u,q)\\\
			&\times \int_0^ \infty U_2(y)\, y^{-i(t+\nu)}  e\left(\frac{-N uy}{p_1qQ} \pm \frac{2\sqrt{mNy}}{p_1q \sqrt{D_2}} \right) \notag\\ 
			&\times \int_{0}^\infty U_1(z)\, z^{i\nu} \, e\left(\frac{Nuz}{p_1qQ} \pm  \frac{3(Nz n_1^2 n_2)^{1/3}}{p_1q r^{1/3}} \right)du\,dy\,dz\,d\nu \notag.
		\end{align}
		Consider the $u$-integral 
		\begin{align}\label{A35}
			\int_{\mathbb{R}}W(u)\, \psi(q,u)\,e\left(\frac{(z-y)Nu}{p_1qQ} \right) du.
		\end{align}
		Let $q \ll Q^{1-\epsilon}$. In this case, we split the $u$-integral into two parts  $|u| \ll Q^{-\epsilon}$ and $|u| \gg Q^{-\epsilon}$. By using property \eqref{delta} of $\psi(q,u)$, for $|u| \ll Q^{-\epsilon}$, we can replace $\psi(q,u)$ by 1 with  negligible error term.  Thus, we arrive at 
		\begin{align*}
			\int_{|u| \ll Q^{-\epsilon}}W(u)\, e\left(\frac{(z-y)Nu}{p_1qQ} \right) du.
		\end{align*}
		On applying the integration by parts repeatedly, we see  that the  above integral is negligibly small unless
		\begin{align}\label{A36}
			|z-y| \ll \frac{p_1qQ}{N}Q^{\epsilon} = \frac{q}{QK}Q^{\epsilon}.
		\end{align}
		For $|u| \gg Q^{-\epsilon}$, using  integration by parts   and 
		\begin{align*}
			\frac{\partial^j}{ \partial u^j} \psi(q, u) \ll \  \min \left\lbrace \frac{Q}{q}, \frac{1}{|u|} \right\rbrace \  \frac{\log Q}{|u|^j}\ll Q^{\epsilon j}, \hspace{0.5cm}	W^j(u) \ll Q^{\epsilon j},
		\end{align*} 
		we get the restriction \eqref{A36} upto a negligible error term. 
		Now it remains to consider the generic case $q\gg Q^{1-\epsilon}$.  In this case we can use the $\nu$-integral to conclude that 
			\begin{align}\label{--}
			|z-y| \ll \frac{1}{K}Q^{\epsilon} \ll  \frac{q}{QK}Q^{\epsilon}.
		\end{align}


		Writing $z- y = w$ with $|w| \ll {q}Q^{\epsilon}/{QK}$, we get at the following expression for  the $y$-integral 
		\begin{align}\label{a13}
			\mathfrak{J}^{\pm,\pm}_{2}(n_1^2n_2,m; q)=	\int_0^ \infty U(y)\,y^{-it} \, e\left(\pm \frac{2\sqrt{mNy}}{p_1q \sqrt{D_2}} \pm\frac{3(N(y+w) n_1^2 n_2)^{1/3}}{p_1qr^{1/3}} \right) \,dy\
		\end{align} 
		where \[U(y) =  d{\pi}^{7/2}i^k\,U_2(y)U_1(y+w)\,\left(1+\frac{w}{y}\right)^{i\nu}\] is a new  weight function satisfying $U^{(j)} (y) \ll_j 1$.  Thus we arrive at
		\begin{align*}
			\mathcal{J}^{\pm,\pm}_2(n^2_1n_2,m; q) &=\displaystyle\int_{\mathbb{R}}\,V_1\left(\frac{\nu}{K}\right)\,\int_{\mathbb{R}}\, W(u)\,\psi(u,q)  \\
		&	 \int_{w \ll \frac{qQ^{\epsilon}}{QK}}e\left(\frac{N uw}{p_1qQ}\right)	\mathfrak{J}^{\pm,\pm}_{2}(n_1^2n_2,m; q) dw\,  du\, d \nu+O(Q^{-A})
		\end{align*}
	 Following Munshi  \cite{r1}, we will  estimate the $w$, $u$ and $\nu$-integral trivially later.  Indeed we estimate the $u$-integral as follows: 
	 \[ \int W(u) \psi(u,q)e\left(\frac{N uw}{p_1qQ}\right) du  \ll Q^\epsilon,\]
	 where we use $\int|\psi(u,q)|^2 du \ll Q^\epsilon$ (see \cite[p.1548]{r1}). We estimate the $\nu$ and $w$-integral trivially. We will further focus  on $ \mathfrak{J}^{\pm,\pm}_{2}(n_1^2n_2,m; q)$ only and will work with this integral only. 
	We use the size of the upper bounds for  $\nu$ and $w$-integral which are $K$ and $q/QK$ respectively from now on. 
		%
		%
		%
		\section{Cauchy-Schwarz and Poisson}\label{sec7}
		In this section, we apply the Cauchy-Schwarz inequality to the  $ n_2 $-sum in $\mathcal{S}_2$  followed by the Poisson summation formula. We assume here $(n_1, p_1)=1$. In the case when $p_1|n_1$, we write $n_1=p_1^{s}n_1'$, $(n_1',p_1)=1$. We  then work with $n_1'$ instead of $n_1$ and whole analysis goes through.  Splitting the sum over $q$ into dyadic blocks  $q \sim C$ with $C \ll Q$ and writing $q = p^{\ell}_1q' = p^{\ell}_1q'_1q'_2$, $\ell \geq 0$ and $q'_1|(n_1r)^{\infty}, (q'_2,n_1r) =1$. 
		Thus we see that 
		\begin{align*}
			\mathcal{S}_2 \ll\,&N^{\epsilon}\,\mathop{\text{sup}}_{C \ll Q}\, \frac{ N^{5/12}\,}{p^{3/2}_1D_2^{1/4}\,C^{3/2}\,r^{2/3}}\,\sum_{\pm}\,\sum_{\pm}\sum_{\frac{n_1}{(n_1,r)}\ll C}\,n^{1/3}_1\,\sum_{\frac{n_1}{(n_1,r)}|q_1'|(n_1r)^{\infty}}\, \\ \notag
			&\times\, \sum_{n_{2}\ll N_0/n^2_1}\,  \frac{|A_{\pi}(n_{2},n_{1})|}{ n^{1/3}_{2}} \,\left|\displaystyle\sum_{q_2'\sim C/q_1'p_1^\ell}\,\,\sum_{m \ll M_0} \frac{\lambda_f(m)}{m^{1/4}}\,\mathcal{C}^{\pm}_2(n^2_1n_2,m; q) \,\mathfrak{J}^{\pm,\pm}_{2}(n^2_1n_2,m; q)\right|.
		\end{align*} 
		We also split the sum over $m$ into dyadic blocks $m \sim M_1$ with $M_1 \ll M_0$.  On  applying  the Cauchy-Schwarz inequality to the sum over $n_2$, we get 
		\begin{align}\label{MS3}
			\mathcal{S}_2 \ll N^{\epsilon}\mathop{\mathop{\text{sup}}_{C \ll Q}}_{M_1 \ll M_0}\sum_{\pm}\sum_{\pm}\frac{ N^{5/12}\,}{p^{3/2}_1 D_2^{1/4}\,C^{3/2}\,r^{2/3}}\sum_{\frac{n_1}{(n_1,r)}\ll C}\,n^{1/3}_1\sum_{\frac{n_1}{(n_1,r)}|q'_1|(n_1r)^{\infty}}\Theta^{1/2}\,\, \Omega^{1/2}_2,
		\end{align}
		where
		\begin{align}\label{m16}
			\Theta = \sum_{n_{2}\ll N_0/n^2_1} \frac{|A_{\pi}(n_{2},n_{1})|^2}{ n^{2/3}_{2}},
		\end{align}
		\begin{align}\label{O3}
			\Omega_2 = \sum_{n_{2}\ll {N_0}/{n^2_1}}\left|\displaystyle\sum_{q'_2\sim {C}/{p^{\ell}_1q'_1}}\, \sum_{m \sim M_1}\, \frac{\lambda_f(m)}{m^{1/4}} \,\mathcal{C}^{\pm}_2(n^2_1n_2,m; p^{\ell}_1q')\,\mathfrak{J}^{\pm,\pm}_{2}(n^2_1n_2,m; p^{\ell}_1q')\right|^2.    
		\end{align}
		
		The next lemma will prove a bound for $\Theta$. In the proof, we are using the Rankin-Selberg bound on average for the $GL(3)$ Fourier coefficients $A_{\pi}(n_2, n_1)$.
		\begin{lemma}\label{m20}
			For $k\geq 7/6$, we have
			\begin{align*}
				L(n_1,n_2,k) = \sum_{n_1 \ll p_1Cr}\, \frac{(n_1, r )^{1/2} \Theta^{1/2}}{n^k_1} \ll_{ \pi, \epsilon} \, N_0^{1/6+\epsilon}   .
			\end{align*}
		\end{lemma}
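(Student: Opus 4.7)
The approach is to apply the Cauchy--Schwarz inequality to the outer $n_1$-sum and reduce the claim to a dyadic application of the Rankin--Selberg bound of Lemma \ref{ramanubound}. I split the weight $n_1^{-k}(n_1,r)^{1/2}$ as $\bigl(n_1^{-1/2}(n_1,r)^{1/2}\bigr)\cdot n_1^{-(k-1/2)}$; Cauchy--Schwarz then yields
\begin{equation*}
  L(n_1,n_2,k)^2 \;\leq\; \Bigl(\sum_{n_1 \ll p_1 C r} \frac{(n_1,r)}{n_1}\Bigr)\Bigl(\sum_{n_1 \ll p_1 C r} \frac{\Theta}{n_1^{2k-1}}\Bigr).
\end{equation*}

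The first factor is routine: writing $n_1 = dm$ with $d \mid r$ and $(m,r/d)=1$ gives $\sum_{n_1\ll p_1 Cr}(n_1,r)/n_1 \leq d(r)\log(p_1 C r) \ll N_0^\epsilon$.

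For the second factor I would open the definition \eqref{m16} of $\Theta$, interchange the $n_1$ and $n_2$ summations, and perform a dyadic decomposition $n_1 \sim A$, $n_2 \sim B$ subject to $A^2 B \ll N_0$. Applying Lemma \ref{ramanubound} to the dual form $\tilde\pi$, together with $|A_\pi(n_2,n_1)| = |A_{\tilde\pi}(n_1,n_2)|$, bounds the inner sum of absolute squares by $(A^2 B)^{1+\epsilon}$. The contribution of a single dyadic box is therefore $A^{3-2k}B^{1/3}N_0^\epsilon = (A^2 B)^{1/3}\,A^{7/3-2k}\,N_0^\epsilon$, which for $k \geq 7/6$ is bounded by $N_0^{1/3+\epsilon}$. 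Summing over the $O(\log^2 N_0)$ dyadic boxes costs only another $N_0^\epsilon$, giving $\sum_{n_1}\Theta/n_1^{2k-1}\ll N_0^{1/3+\epsilon}$.

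Combining the two factors gives $L(n_1,n_2,k)^2 \ll N_0^{1/3+\epsilon}$, hence $L(n_1,n_2,k) \ll N_0^{1/6+\epsilon}$ as claimed. The only delicate point is the Cauchy--Schwarz split: pairing the weight $n_1^{2k-1}$ against $n_2^{2/3}$ under the Rankin--Selberg constraint $n_1^2 n_2 \ll N_0$ forces the dyadic box to collapse cleanly to $(A^2 B)^{1/3}$ precisely when $7/3 - 2k \leq 0$, which is the hypothesis $k \geq 7/6$; nothing else in the argument is subtle.
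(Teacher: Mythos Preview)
Your proof is correct and follows essentially the same approach as the paper: Cauchy--Schwarz in the $n_1$-sum followed by the Rankin--Selberg bound of Lemma~\ref{ramanubound}. The only cosmetic difference is the placement of the $k$-dependence in the Cauchy--Schwarz split: the paper pairs $n_1^{-2/3}$ with $\Theta^{1/2}$ so that the second factor becomes $\sum_{n_1^2 n_2\ll N_0}|A_\pi(n_2,n_1)|^2/(n_1^2 n_2)^{2/3}$ and is handled in one stroke by partial summation, while you push the extra weight $n_1^{-(2k-1)}$ into the second factor and use a double dyadic decomposition; both routes lead to the same $N_0^{1/3+\epsilon}$ and both use the hypothesis $k\geq 7/6$ at the analogous point.
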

		\begin{proof}
			By putting the value of $\Theta$, we get
			\begin{align*}
				L(n_1,n_2,k) =  \sum_{n_1 \ll p_1Cr}\, \frac{(n_1,r)^{1/2}}{n^{k-2/3}_1}\,\frac{1}{n^{2/3}_1}\,  \left(\sum_{n^2_1n_{2}\ll N_0} \frac{|A_{\pi}(n_{2},n_{1})|^2}{n_2^{2/3}} \right)^{1/2}.   
			\end{align*}
			Now we apply the Cauchy-Schwartz inequality in $n_1$-sum, and we get
			\begin{align*}
				L(n_1,n_2,k) \ll \, \left(\sum_{n_1 \ll p_1Cr}\, \frac{(n_1,r)}{n^{2k-4/3}_1}\right)^{1/2}\, \left(\sum_{n^2_1n_{2}\ll N_0}  \frac{|A_{\pi}(n_{2},n_{1})|^2}{(n^2_1n_2)^{2/3}} \right)^{1/2} .    
			\end{align*}
			Note that the first factor is  $O(p_1Cr)^{\epsilon}$.  For the second factor, we apply the Abel summation formula and Rankin-Selberg bound to get 
			\begin{align*}
				\sum_{n^2_1n_{2}\ll N_0}  \frac{|A_{\pi}(n_{2},n_{1})|^2}{(n^2_1n_2)^{2/3}}  \, \ll_{\epsilon} \, N^{1/3+\epsilon}_0.    
			\end{align*}
			This proves our lemma.
		\end{proof}
		
		\begin{lemma}\label{ch1}
			Let $q=p^\ell q'$.  We have 
			\begin{align*}
				\mathcal{C}^{\pm}_2(n^2_1n_2,m; q)=	\mathcal{C}^{\pm}_2(\mathrm{err_1})+ 	\mathcal{C}^{\pm}_2(\mathrm{err_2})+	\mathcal{C}^{\pm}_2(\mathrm{main}),
			\end{align*}
			where  \[ \mathcal{C}^{\pm}_2(\mathrm{err_1})= \delta(\ell=0)  \sum_{d'|q'}\,d'\,\mu\left(\frac{q'}{d'}\right)\,\mathop{\mathop{\sideset{}{^\star}{\displaystyle\sum}_{\alpha \, \mathrm{mod} \, q'r/n_1}}_{\alpha n_1\, \equiv\, m\overline{D_2}\,\,\mathrm{mod} \, d'}}_{  }\,e\left(\frac{\pm \bar{\alpha}n_2 \overline{p_1}}{q'r/n_1}\right),  \] 
			 \[ \mathcal{C}^{\pm}_2(\mathrm{err_2})= \delta(\ell=0)(-p_1\delta(n_2 \equiv 0\, \mathrm{mod}\, p_1)) \sum_{d'|q'}\,d'\,\mu\left(\frac{q'}{d'}\right)\,\mathop{\mathop{\sideset{}{^\star}{\displaystyle\sum}_{\alpha \, \mathrm{mod} \, q'r/n_1}}_{\alpha n_1\, \equiv\, m\overline{D_2}\,\,\mathrm{mod} \, d'}}_{  }\,e\left(\frac{\pm \bar{\alpha}n_2 \overline{p_1}}{q'r/n_1}\right),  \] 
			and for $\ell \geq 0$
			\begin{align*}
				\mathcal{C}^{\pm}_2(\mathrm{main}) =p_1\sum_{d'|q'}\,d'\,\mu\left(\frac{q'}{d'}\right)\ \mathop{\mathop{\sideset{}{^\star}{\displaystyle\sum}_{\alpha \, \mathrm{mod} \, p_1qr/n_1}}_{\alpha n_1\, \equiv\, m\overline{D_2}\,\,\mathrm{mod} \, p_1d'}}e\left(\frac{\pm \bar{\alpha}n_2}{p_1qr/n_1}\right)  \sideset{}{^\star}{\displaystyle\sum}_{a \, \mathrm{mod} \, p_1^{\ell}} e\left(  \frac{(n_1 \alpha -m \overline{ D_2 })a  }{p_1^{1+\ell}}\right),
			\end{align*}
			where for $\ell=0$, the sum over $a$ is  $1$.
		\end{lemma}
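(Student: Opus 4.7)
The plan is to prove this as a pure identity, unwinding the Kloosterman sum, swapping the order of summation, and then decomposing the resulting inner Ramanujan sum by the Chinese Remainder Theorem. No estimation is needed.

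First I would open the Kloosterman sum as
\begin{align*}
S\bigl(r\overline{c}, \pm n_2; p_1 qr/n_1\bigr) = \sideset{}{^\star}{\sum}_{\alpha \,\mathrm{mod}\, p_1qr/n_1} e\!\left(\frac{r\overline{c}\alpha \pm n_2 \overline{\alpha}}{p_1qr/n_1}\right),
\end{align*}
simplify the first phase using $r\overline{c}\alpha/(p_1qr/n_1) = \overline{c}\alpha n_1/(p_1 q)$, and swap the $c$- and $\alpha$-summations. After the bijection $c \mapsto \overline{c}$ on $(\mathbb{Z}/p_1q)^\star$, the inner $c$-sum becomes the Ramanujan sum $c_{p_1 q}(\alpha n_1 - m\overline{D_2})$, where $(D_2, p_1q) = 1$ since $D_2 \mid p_2$, $p_1 \neq p_2$, and $D_2 = p_2$ forces $(p_2,q')=1$. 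Thus
\begin{align*}
\mathcal{C}^{\pm}_2(n_1^2 n_2, m; q) = \sideset{}{^\star}{\sum}_{\alpha \,\mathrm{mod}\, p_1qr/n_1} e\!\left(\frac{\pm n_2 \overline{\alpha}}{p_1qr/n_1}\right) c_{p_1q}(\alpha n_1 - m\overline{D_2}).
\end{align*}

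Next, using $p_1 q = p_1^{1+\ell} q'$ with $(p_1, q')=1$, CRT factors the Ramanujan sum as $c_{p_1 q}(x) = c_{p_1^{1+\ell}}(x)\,c_{q'}(x)$. I expand $c_{q'}(x) = \sum_{d'\mid q'} d'\,\mu(q'/d')\,\mathbb{1}[d'\mid x]$, which produces the $d'$-sum and the congruence $\alpha n_1 \equiv m\overline{D_2}\pmod{d'}$. For the $p_1^{1+\ell}$ factor I would split into two cases. For $\ell \geq 1$, the elementary identity $c_{p_1^{1+\ell}}(x) = p_1\cdot\mathbb{1}[p_1\mid x]\cdot c_{p_1^{\ell}}(x/p_1)$ applies (both sides vanish when $p_1\nmid x$); rewriting $c_{p_1^{\ell}}((\alpha n_1 - m\overline{D_2})/p_1)$ as $\sum^\star_{a\,\mathrm{mod}\,p_1^{\ell}} e\bigl((n_1\alpha - m\overline{D_2})a/p_1^{1+\ell}\bigr)$ and combining the two divisibility conditions into $\alpha n_1 \equiv m\overline{D_2}\pmod{p_1 d'}$ (permitted since $(p_1,d')=1$) yields exactly $\mathcal{C}^{\pm}_2(\mathrm{main})$, and no error terms.

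The remaining case is $\ell = 0$, where $c_{p_1}(x) = p_1\mathbb{1}[p_1\mid x] - 1$. The first piece $p_1\mathbb{1}[p_1\mid x]$ reproduces $\mathcal{C}^{\pm}_2(\mathrm{main})$ with the trivial $a$-sum. For the $-1$ piece I would apply CRT once more on the $\alpha$-summation, writing $\alpha \equiv \alpha_1\pmod{p_1}$, $\alpha \equiv \alpha_2\pmod{q'r/n_1}$, which requires $(p_1, q'r/n_1) = 1$; here $(p_1,q')=(p_1,n_1)=1$ and the $p_1$-part of $r$ (if any) is first absorbed into the $p_1^{1+\ell}$ factor. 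The $\alpha_1$-sum then evaluates to $c_{p_1}(\pm n_2\,\overline{q'r/n_1}) = c_{p_1}(n_2) = p_1\delta(p_1\mid n_2) - 1$. Distributing $-1 \times (p_1\delta(p_1\mid n_2) - 1)$ across the remaining $\alpha_2$-sum and reinserting the expansion of $c_{q'}$ gives the two pieces $\mathcal{C}^{\pm}_2(\mathrm{err}_1)$ (the $-(-1) = +1$ contribution) and $\mathcal{C}^{\pm}_2(\mathrm{err}_2)$ (the $-p_1\delta(p_1\mid n_2)$ contribution), with modulus $q'r/n_1$ and the characteristic extra $\overline{p_1}$ inside the exponential coming from the CRT decomposition of $\overline{\alpha}/(p_1 q'r/n_1)$.

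The calculation is entirely mechanical; the main obstacle is purely bookkeeping — tracking coprimalities at each CRT step, making sure the constraints $\alpha n_1 \equiv m\overline{D_2}\pmod{p_1 d'}$ versus $\pmod{d'}$ end up on the correct piece, and verifying that the $a$-sum collapses to $1$ when $\ell = 0$ so that the main term has a uniform shape for all $\ell \geq 0$.
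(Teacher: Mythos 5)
Your proposal is correct and reaches exactly the lemma's decomposition by the same underlying mechanism as the paper (CRT plus Ramanujan/M\"obius and the merging of the two divisibility constraints into $\alpha n_1\equiv m\overline{D_2}\ (\mathrm{mod}\ p_1d')$), but with a mild reorganization of steps. You first collapse the outer $c$-sum into a single Ramanujan sum $c_{p_1q}(\alpha n_1-m\overline{D_2})$ and then factor $c_{p_1q}=c_{p_1^{1+\ell}}\cdot c_{q'}$; the paper instead invokes twisted multiplicativity of $S$ together with reciprocity on $e(-m\overline{cD_2}/p_1q)$ at the outset, splitting $\mathcal{C}^\pm_2$ into a literal product $\mathcal{C}^\pm_3\cdot\mathcal{C}^\pm_4$ of a $q'$-block and a $p_1^{1+\ell}$-block, and only then opens the Kloosterman sums inside each block. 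These are algebraically equivalent routes; your treatment of the $p_1^{1+\ell}$ factor via $c_{p_1^{1+\ell}}(x)=p_1\,\mathbb{1}[p_1\mid x]\,c_{p_1^{\ell}}(x/p_1)$ for $\ell\ge 1$ and $c_{p_1}(x)=p_1\,\mathbb{1}[p_1\mid x]-1$ for $\ell=0$ is in fact a tidier substitute for the paper's change of variable $c\to a+bp_1^{\ell}$. One point that both you and the paper leave implicit but which you are right to flag is the coprimality $(p_1,\ q'r/n_1)=1$: it is needed both for the CRT factorization (of $c_{p_1q}$, or equivalently of $S$) and for $\overline{p_1}$ to make sense modulo $q'r/n_1$ in $\mathcal{C}^\pm_2(\mathrm{err}_i)$, and your remark about absorbing any $p_1$-part of $r$ into the $p_1^{1+\ell}$ block is the correct way to arrange it.
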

		\begin{proof}
			From Lemma \ref{a10}, we have
			\begin{align*}
				\mathcal{C}^{\pm}_2(n^2_1n_2,m; q)= \sideset{}{^\star}{\displaystyle\sum}_{c \, \mathrm{mod} \, p_1^{1+\ell}q'}\,\,\,S\left( r\overline{c}, \pm n_{2}; p_1^{1+\ell} q'r/n_{1}\right)\,e\left(-m  \frac{\overline{c D_2}}{p_1^{1+\ell}q'}\right).
			\end{align*} 
			On applying reciprocity, we write
			$ 	\mathcal{C}^{\pm}_2(n^2_1n_2,m; q)=	\mathcal{C}^{\pm}_3(...)	\mathcal{C}^{\pm}_4(...),$
			where 
			\[ 	\mathcal{C}^{\pm}_3(...)= \sideset{}{^\star}{\displaystyle\sum}_{c \, \mathrm{mod} \, q'}\,\,\,S\left( r\overline{c} \overline{p_1^{1+\ell}} , \pm n_{2} \overline{p_1^{1+\ell}};  q'r/n_{1}\right)\,e\left(-m  \frac{\overline{c D_2} \overline{p_1^{1+\ell}}}{q'}\right),\]
			and
			\[ 	\mathcal{C}^{\pm}_4(...)= \sideset{}{^\star}{\displaystyle\sum}_{c \, \mathrm{mod} \, p_1^{1+\ell}}\,\,\,S\left( n_1\overline{c} \overline{q'} , \pm n_{2}n_1 \overline{q'r };  p_1^{1+\ell} \right)\,e\left(-m  \frac{\overline{c D_2 q'}  }{p_1^{1+\ell}}\right),\]
			Opening the Kloosterman sum and executing the sum $c$ over $q'$, we get 
			\begin{align}\label{c3}
				\mathcal{C}^{\pm}_3(...)= \sum_{d'|q'}\,d'\,\mu\left(\frac{q'}{d'}\right)\,\mathop{\mathop{\sideset{}{^\star}{\displaystyle\sum}_{\alpha \, \mathrm{mod} \, q'r/n_1}}_{\alpha n_1\, \equiv\, m\overline{D_2}\,\,\mathrm{mod} \, d'}}_{  }\,e\left(\frac{\pm \bar{\alpha}n_2 \overline{p_1^{1+\ell}}}{q'r/n_1}\right).
			\end{align}
			Similarly, 	we get 
			\begin{align*}
				\mathcal{C}^{\pm}_4(...)&= \sideset{}{^\star}{\displaystyle\sum}_{c \, \mathrm{mod} \, p_1^{1+\ell}}\,\,\,S\left( n_1\overline{c} \overline{q'} , \pm n_{2}n_1 \overline{q'r };  p_1^{1+\ell} \right)\,e\left(-m  \frac{\overline{c D_2 q'}  }{p_1^{1+\ell}}\right)\\
				&= \sideset{}{^\star}{\displaystyle\sum}_{\alpha \, \mathrm{mod} \, p_1^{1+\ell}}\, e\left( \pm\frac{\bar{\alpha} n_2n_1\overline{q'r}}{p_1^{1+\ell}}\right)      \sideset{}{^\star}{\displaystyle\sum}_{c \, \mathrm{mod} \, p_1^{1+\ell}} e\left(  \frac{n_1\overline{c}\alpha -m \overline{c D_2 }  }{p_1^{1+\ell}}\right).
			\end{align*}
			Consider the sum over $c$.  For $\ell=0$, we get 
			\begin{align*}
				\mathcal{C}^{\pm}_4(...)&=  p_1\, \mathop{\mathop{\sideset{}{^\star}{\displaystyle\sum}_{\alpha \, \mathrm{mod} \, p_1}}_{\alpha n_1\, \equiv\, m\overline{D_2}\,\,\mathrm{mod} \, p_1 }}_{} e\left( \pm\frac{\bar{\alpha} n_2n_1\overline{q'r}}{p_1}\right)-  \sideset{}{^\star}{\displaystyle\sum}_{ \alpha \, \mathrm{mod} \, p_1} e\left( \pm\frac{\bar{\alpha} n_2n_1\overline{q'r}}{p_1^{}}\right) \\
				&=1-p_1\delta(n_2 \equiv 0\, \mathrm{mod}\, p_1)+  p_1\, \mathop{\mathop{\sideset{}{^\star}{\displaystyle\sum}_{\alpha \, \mathrm{mod} \, p_1}}_{\alpha n_1\, \equiv\, m\overline{D_2}\,\,\mathrm{mod} \, p_1 }}_{} e\left( \pm\frac{\bar{\alpha} n_2n_1\overline{q'r}}{p_1}\right). 
			\end{align*}
			For $\ell>0$, 
			by the change of variable $c \rightarrow \bar{c}$ followed by $c \rightarrow a+bp_1^\ell$, we arrive at
			\begin{align*}
				\mathcal{C}^{\pm}_4(...)
				&=p_1 \mathop{\mathop{\sideset{}{^\star}{\displaystyle\sum}_{\alpha \, \mathrm{mod} \, p_1^{1+\ell}}}_{\alpha n_1\, \equiv\, m\overline{D_2}\,\,\mathrm{mod} \, p_1 }}_{}  e\left( \pm\frac{\bar{\alpha} n_2n_1\overline{q'r}}{p_1^{1+\ell}}\right)      \sideset{}{^\star}{\displaystyle\sum}_{a \, \mathrm{mod} \, p_1^{\ell}} e\left(  \frac{n_1a \alpha -ma \overline{ D_2 }  }{p_1^{1+\ell}}\right).
			\end{align*}
			Combining the above expressions of $	\mathcal{C}^{\pm}_4(...)$ with \eqref{c3} we get the lemma. 
		\end{proof}
		\vspace{0.3cm}
		
		Using  Lemma \ref{ch1} and the  inequality $|a+b+c|^2 \ll |a|^2+|b|^2+|c|^2$, $a, b,c \in \mathbb{C}$, we see that
		$$\Omega_2 \ll \Omega_2(\mathrm{main})+\Omega_2(\mathrm{err_i}),$$ where $\Omega_2(\mathrm{main})$ and  $\Omega_2(\mathrm{err_i})$ denote the contribution of  $	\mathcal{C}^{\pm}_2(\mathrm{main})$ and $	\mathcal{C}^{\pm}_2(\mathrm{err_i})$ (i$=1,2$) to $\Omega_2$ respectively. 
		\vspace{0.3cm}
		
		\section{Application of  Poisson summation }
		Opening the absolute valued square and smoothing out the sum over $n_2$ with an appropriate smooth  and compactly supported function $V$ in  $\Omega_2(\mathrm{main})$, we get
		\begin{align}\label{a18}
			\Omega_2(\mathrm{main})\ll\,  \,\displaystyle\sum_{q_2'\sim C/p_1^\ell q'_1}\,\displaystyle\sum_{q''_2\sim C/p_1^{\ell}q'_1}\,\,\sum_{m \sim M_1}\,\sum_{m_1 \sim M_1} \frac{|\lambda_f(m)\,\lambda_f(m_1)|}{(mm_1)^{1/4}}\,|\mathcal{H}_2(\mathrm{main})|,
		\end{align} where $q'' = q_1'q''_2$ and
		\begin{align}\label{O32}
			\mathcal{H}_2(\mathrm{main})= &\sum_{n_{2}\in \mathbb{Z}}\,V\left(\frac{n_2}{N_0/n^2_1}\right)\, \mathcal{C}^{\pm}_2(\mathrm{main})\,\overline{\mathcal{C}^{\pm}_2(\mathrm{main})}\,\notag\\
			& \times \, \mathcal{J}^{\pm,\pm}_{2}(n^2_1n_2,m; p^{\ell}_1q')\,\overline{\mathcal{J}^{\pm,\pm}_{2}(n^2_1n_2,m_1; p^{\ell}_1q'')}.
		\end{align}
		By Lemma \ref{ch1}, we have
		\begin{align*}
			\mathcal{C}^{\pm}_2(\mathrm{main}) =p_1\sum_{d'|q'}\,d'\,\mu\left(\frac{q'}{d'}\right)\ \mathop{\mathop{\sideset{}{^\star}{\displaystyle\sum}_{\alpha \, \mathrm{mod} \, p_1qr/n_1}}_{\alpha n_1\, \equiv\, m\overline{D_2}\,\,\mathrm{mod} \, p_1d'}}e\left(\frac{\pm \bar{\alpha}n_2}{p_1qr/n_1}\right)  \sideset{}{^\star}{\displaystyle\sum}_{a \, \mathrm{mod} \, p_1^{\ell}} e\left(  \frac{(n_1\alpha -m \overline{ D_2 })a  }{p_1^{1+\ell}}\right).
		\end{align*}
		Thus 
		\begin{align} \label{Mm}
			\mathcal{C}^{\pm}_2(\mathrm{main})\,\overline{\mathcal{C}^{\pm}_2(\mathrm{main})}& =\, p_1^2\,\sum_{d'|q'}\sum_{d''|q''} d'\,d''\, \mu\left(\frac{q'}{d'}\right)\mu\left(\frac{q''}{d''}\right) \notag \\ \times\,\, &\mathop{\mathop{\sideset{}{^\star}{\displaystyle\sum}_{\alpha \, \mathrm{mod} \, p_1^{1+\ell}q'r/n_1}}_{\alpha n_1\, \equiv\, m\overline{D_2}\,\,\mathrm{mod} \, p_1d'}}\,\,\, \mathop{\mathop{\sideset{}{^\star}{\displaystyle\sum}_{\alpha' \, \mathrm{mod} \, p_1^{1+\ell}q''r/n_1}}_{\alpha' n_1\, \equiv\, m_1\overline{D_2}\,\,\mathrm{mod} \, p_1d''}} \, \, e\left(\frac{\pm(\overline{\alpha}q''_2 - \overline{\alpha}'q'_2)n_2}{p_1^{1+\ell}q'_1q'_2q''_2r/n_1}\right) \notag \\
			& \times \sideset{}{^\star}{\displaystyle\sum}_{a \, \mathrm{mod} \, p_1^{\ell}}\,\,  \sideset{}{^\star}{\displaystyle\sum}_{a' \, \mathrm{mod} \, p_1^{\ell}} e\left(  \frac{n_1a \alpha -n_1a'\alpha'- ma \overline{ D_2} +m_1a'\overline{ D_2} }{p_1^{1+\ell}}\right).
		\end{align}
		
		Consider the sum over $n_2$:
		\[ \sum_{n_{2} \in \mathbb{Z}} \,  V\left(\frac{n_2}{N_0/n^2_1}\right) e\left(\frac{\pm(\overline{\alpha}q''_2 - \overline{\alpha}'q'_2)n_2}{p_1^{1+\ell}q'_1q'_2q''_2r/n_1}\right)  \mathcal{J}^{\pm,\pm}_{2}(n^2_1n_2,m; p^{\ell}_1q')\,\overline{\mathcal{J}^{\pm,\pm}_{2}(n^2_1n_2,m_1; p^{\ell}_1q'')}.   \]
		Let $P_2 = \frac{p_1^{1+\ell}q_1'q_2'q''_2r}{n_1}$. On applying Poisson summation with modulus $P_2$, we arrive at the following expression:
		\begin{align}\label{a21}
			\Omega_2(\mathrm{main})\ll\,  \,   \frac{ p^2_1\,N_0}{n^2_1\,M^{1/2}_1}  \sum_{n_{2}\in \mathbb{Z}}\displaystyle\sum_{q_2'\sim C/p_1^\ell q'_1}\,\displaystyle\sum_{q''_2\sim C/p_1^{\ell}q'_1}\,\,\sum_{m \sim M_1}\,\sum_{m_1 \sim M_1}  |\mathfrak{C}_2(\mathrm{main})|\,|\mathcal{G}|,
		\end{align} 
		where 
		\begin{align}\label{a20}
			\notag \mathfrak{C}_2(\mathrm{main}) =\,&\ \sum_{d'|q'}\sum_{d''|q''}\,d' d''\,\mu\left(\frac{q'}{d'}\right)\mu\left(\frac{q''}{d''}\right) \sideset{}{^\star}{\displaystyle\sum}_{a \, \mathrm{mod} \, p_1^{\ell}}  \sideset{}{^\star}{\displaystyle\sum}_{a' \, \mathrm{mod} \, p_1^{\ell}} e\left(  \frac{ - ma \overline{ D_2} +m_1a'\overline{ D_2} }{p_1^{1+\ell}}\right)\\
			&\,\times \, \mathop{\mathop{\mathop{\sideset{}{^\star}{\displaystyle\sum}_{\alpha \, \mathrm{mod} \, p^{1+\ell}_1q'_1 q'_2r/n_1}}_{\alpha n_1\, \equiv\, m\overline{D_2}\,\,\mathrm{mod} \, p_1d' }}\,\,\, \mathop{\mathop{\sideset{}{^\star}{\displaystyle\sum}_{\alpha' \, \mathrm{mod} \, p^{1+\ell}_1q'_1q''_2r/n_1}}_{\alpha' n_1\, \equiv\, m_1\overline{D_2}\,\,\mathrm{mod} \, p_1d''}}{ } }_{\overline{\alpha}q''_2 - \overline{\alpha}'q'_2 \equiv -n_2\;\mathrm{mod}\; P_2}\,e\left(  \frac{ n_1a \alpha -n_1a'\alpha' }{p_1^{1+\ell}}\right),
		\end{align}
		and
		\begin{align}\label{a26}
			\mathcal{G} = \int_{\mathbb{R}} V(\xi)\, \mathcal{J}^{\pm,\pm}_{2}(N_0 \xi,m; p^{\ell}_1q')\,\overline{\mathcal{J}^{\pm,\pm}_{2}(N_0 \xi,m_1; p^{\ell}_1q'')}   e\left(-\frac{n_2N_0\xi}{P_2n^2_1}\right)d\xi.
		\end{align} 
		Note that we have used Deligne's bound $\lambda_f(n) \ll n^\epsilon$ (which is known for holomorphic cusp forms). For a  Maass  cusp form $f$
		we can avoid using  $\lambda_f(n) \ll n^\epsilon$. Indeed  the  inequality	$\lambda_{f}(m)\lambda_{f}(m_1)\ll |\lambda_{f}(m)|^2+|\lambda_{f}(m_1)|^2$ followed by the Ramanujan bound on average 
		\[ \sum_{n \ll N}|\lambda_{f}(m)|^2 \ll(t_fP_1P_2)^\epsilon N\] 
		suffices for our purpose.  We analyse $\Omega_2(\mathrm{err_i})$  in a similar way. Recall that 
			\begin{align}\label{a err}
			\Omega_2(\mathrm{err_i})\ll\,  \,\displaystyle\sum_{q_2'\sim C/ q'_1}\,\displaystyle\sum_{q''_2\sim C/q'_1}\,\,\sum_{m \sim M_1}\,\sum_{m_1 \sim M_1} \frac{|\lambda_f(m)\,\lambda_f(m_1)|}{(mm_1)^{1/4}}\,|\mathcal{H}_2(\mathrm{err_i})|,
		\end{align}
		with 
			\begin{align}\label{O32}
			\mathcal{H}_2(\mathrm{err_i})= &\sum_{n_{2}\in \mathbb{Z}}\,V\left(\frac{n_2}{N_0/n^2_1}\right)\, \mathcal{C}^{\pm}_2(\mathrm{err_i})\,\overline{\mathcal{C}^{\pm}_2(\mathrm{err_i})}\,\notag\\
			& \times \, \mathcal{J}^{\pm,\pm}_{2}(n^2_1n_2,m; q')\,\overline{\mathcal{J}^{\pm,\pm}_{2}(n^2_1n_2,m_1; q'')}.
		\end{align}
		Recall that this case occurs only for $\ell=0$. We proceed to apply Poisson summation to $\mathcal{H}_2(\mathrm{err_i})$.  We consider    $\mathcal{H}_2(\mathrm{err_2})$ first.  In this case we have $p_1|n_2$.  By the change of variable $n_2 \rightarrow n_2p_1$, we see that
			\begin{align}
			\mathcal{C}^{\pm}_2(\mathrm{err_2})\,\overline{\mathcal{C}^{\pm}_2(\mathrm{err_2})}& =\, p_1^2\,\sum_{d'|q'}\sum_{d''|q''} d'\,d''\, \mu\left(\frac{q'}{d'}\right)\mu\left(\frac{q''}{d''}\right) \notag \\ \times\,\, &\mathop{\mathop{\sideset{}{^\star}{\displaystyle\sum}_{\alpha \, \mathrm{mod} \, q'r/n_1}}_{\alpha n_1\, \equiv\, m\overline{D_2}\,\,\mathrm{mod} \, d'}}\,\,\, \mathop{\mathop{\sideset{}{^\star}{\displaystyle\sum}_{\alpha' \, \mathrm{mod} \, q''r/n_1}}_{\alpha' n_1\, \equiv\, m_1\overline{D_2}\,\,\mathrm{mod} \, d''}} \, \, e\left(\frac{\pm(\overline{\alpha}q''_2 - \overline{\alpha}'q'_2)n_2}{q'_1q'_2q''_2r/n_1}\right). 
		\end{align}
	On applying Poisson summation with modulus  $ \frac{q_1'q_2'q''_2r}{n_1}$, we arrive at the following expression:
	\begin{align}\label{a21'}
		\Omega_2(\mathrm{err_2})\ll\,  \,   \frac{ p_1\,N_0}{n^2_1\,M^{1/2}_1}  \sum_{n_{2}\in \mathbb{Z}}\displaystyle\sum_{q_2'\sim C/ q'_1}\,\displaystyle\sum_{q''_2\sim C/q'_1}\,\,\sum_{m \sim M_1}\,\sum_{m_1 \sim M_1}  |\mathfrak{C}_2(\mathrm{err_2})|\,|\mathcal{G}|,
	\end{align} 
	where 
	\begin{align}
	\notag \mathfrak{C}_2(\mathrm{err_2}) = \sum_{d'|q'}\sum_{d''|q''}\,d' d''\,\mu\left(\frac{q'}{d'}\right)\mu\left(\frac{q''}{d''}\right)  
		 \mathop{\mathop{\mathop{\sideset{}{^\star}{\displaystyle\sum}_{\alpha \, \mathrm{mod} \, q'_1 q'_2r/n_1}}_{\alpha n_1\, \equiv\, m\overline{D_2}\,\,\mathrm{mod} \, d' }}\, \mathop{\mathop{\sideset{}{^\star}{\displaystyle\sum}_{\alpha' \, \mathrm{mod} \, q'_1q''_2r/n_1}}_{\alpha' n_1\, \equiv\, m_1\overline{D_2}\,\,\mathrm{mod} \, d''}} }_{\overline{\alpha}q''_2 - \overline{\alpha}'q'_2 \equiv -n_2\;\mathrm{mod}\; {q_1'q_2'q''_2r}/{n_1}  }1
	\end{align}
	and
	\begin{align}\label{a26}
\mathcal{G}=	\mathcal{G}(\ell=0) = \int_{\mathbb{R}} V(\xi)\, \mathcal{J}^{\pm,\pm}_{2}(N_0 \xi,m; q')\,\overline{\mathcal{J}^{\pm,\pm}_{2}(N_0 \xi,m_1; q'')}   e\left(-\frac{n_2N_0\xi}{ P_2n_1^2}\right)d\xi.
	\end{align} 
Note that the integral transform in this case is exactly  the same as that in $	\Omega_2(\mathrm{main})$ for $\ell=0$. Moreover  $\mathfrak{C}_2(\mathrm{err_2})$ is also similar to $\mathfrak{C}_2(\mathrm{err_2})$ (at $\ell=0$) without the $p_1$-part in the $\alpha$, $\alpha^\prime$-sum. Similarly on applying Poisson summation to the $n_2$-sum in $\Omega_2(\mathrm{err_1})$, we get 
	\begin{align}\label{a21'}
	\Omega_2(\mathrm{err_1})\ll\,  \,   \frac{ N_0}{n^2_1\,M^{1/2}_1}  \sum_{n_{2}\in \mathbb{Z}}\displaystyle\sum_{q_2'\sim C/ q'_1}\,\displaystyle\sum_{q''_2\sim C/q'_1}\,\,\sum_{m \sim M_1}\,\sum_{m_1 \sim M_1}  |\mathfrak{C}_2(\mathrm{err_1})|\,|\mathcal{G}(\mathrm{err_1})|,
\end{align} 
where 
\begin{align}
	\notag \mathfrak{C}_2(\mathrm{err_1}) = \sum_{d'|q'}\sum_{d''|q''}\,d' d''\,\mu\left(\frac{q'}{d'}\right)\mu\left(\frac{q''}{d''}\right)  
	\mathop{\mathop{\mathop{\sideset{}{^\star}{\displaystyle\sum}_{\alpha \, \mathrm{mod} \, q'_1 q'_2r/n_1}}_{\alpha n_1\, \equiv\, p_1m\overline{D_2}\,\,\mathrm{mod} \, d' }}\, \mathop{\mathop{\sideset{}{^\star}{\displaystyle\sum}_{\alpha' \, \mathrm{mod} \, q'_1q''_2r/n_1}}_{\alpha' n_1\, \equiv\, p_1m_1\overline{D_2}\,\,\mathrm{mod} \, d''}} }_{\overline{\alpha}q''_2 - \overline{\alpha}'q'_2 \equiv -n_2\;\mathrm{mod}\; {q_1'q_2'q''_2r}/{n_1}  }1
\end{align}
and
\begin{align}\label{a26}
	\mathcal{G}(\mathrm{err_1})  = \int_{\mathbb{R}} V(\xi)\, \mathcal{J}^{\pm,\pm}_{2}(N_0 \xi,m; q')\,\overline{\mathcal{J}^{\pm,\pm}_{2}(N_0 \xi,m_1; q'')}   e\left(-\frac{n_2p_1N_0\xi}{ P_2n_1^2}\right)d\xi.
\end{align} 

		\section{Estimates for  Character Sums}
		In this section we will estimate the character sum in the two cases: zero frequency $n_2 =0$ and the non-zero frequency $n_2 \neq 0$. We denote the contribution of the zero and non-zero frequency to   $\mathfrak{C}_{2}(\mathrm{main})$ by $\mathfrak{C}_{2, 0}(\mathrm{main})$ and $\mathfrak{C}_{2, \neq 0}(\mathrm{main})$ respectively.  $\mathfrak{C}_{2, 0}(\mathrm{err_i})$ and $\mathfrak{C}_{2, \neq 0}(\mathrm{err_i})$, i$=1,2$ are defined similarly. 
		\begin{lemma} \label{ch0}
			For $n_2=0$, we have $q'=q_1'q_2'=q_1'q_2''$ and 
			\begin{align*}
				\mathfrak{C}_{2, 0}(\mathrm{main})  \ll\, {p^{3\ell}_1q'_1 q'_2r} \, \mathop{\sum_{d'|q'}\sum_{d''|q'}}_{{p_1}(d', d'')|\,(m-m_1)} \,(d', d'').  
			\end{align*}
			\begin{align*}
	 \mathfrak{C}_2(\mathrm{err_1}), \mathfrak{C}_2(\mathrm{err_2})  \ll \, {q'_1 q'_2r} \, \mathop{\sum_{d'|q'}\sum_{d''|q'}}_{{}(d', d'')|\,(m-m_1)} \,(d', d'').  
		\end{align*}
		\end{lemma}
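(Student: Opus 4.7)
The plan is to substitute $n_2=0$ into the congruence inside $\mathfrak{C}_2(\mathrm{main})$ and reduce it to a diagonal condition. Recall the congruence is $\overline{\alpha}q_2'' - \overline{\alpha}' q_2' \equiv -n_2 \pmod{P_2}$ with $P_2 = p_1^{1+\ell} q_1' q_2' q_2'' r/n_1$. Setting $n_2=0$ and reducing modulo $q_2'$, the $\overline{\alpha}'q_2'$ term vanishes, so $q_2' \mid \overline{\alpha} q_2''$; since $\alpha$ is coprime to $q_2'$ (which divides the modulus of the $\alpha$-sum), I conclude $q_2' \mid q_2''$, and by symmetry $q_2'=q_2''$ and hence $q'=q''$. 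The surviving congruence then simplifies to $\overline{\alpha} \equiv \overline{\alpha}' \pmod{p_1^{1+\ell}q_1'q_2'r/n_1}$, i.e.\ $\alpha=\alpha'$, since both variables range over the same reduced residue system.

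With $\alpha=\alpha'$, the two side conditions $\alpha n_1 \equiv m\overline{D_2} \pmod{p_1 d'}$ and $\alpha n_1 \equiv m_1\overline{D_2} \pmod{p_1 d''}$ are jointly solvable if and only if $m\equiv m_1 \pmod{p_1(d',d'')}$, which is precisely the divisibility condition appearing in the claimed bound. Assuming compatibility, these congruences pin $\alpha$ down uniquely modulo $p_1\,\mathrm{lcm}(d',d'')$, yielding an admissible residue count of order $q_1'q_2'r(d',d'')/(d'd''n_1)$ in the non-$p_1$ part, together with a $p_1^{\ell}$ factor from the $p_1$-part of the $\alpha$-modulus.

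I would next handle the inner sums over $a,a'$ modulo $p_1^{\ell}$, which appear only when $\ell\geq 1$. Once $\alpha=\alpha'$ is imposed, the $a$- and $a'$-sums decouple into two Ramanujan-type sums of the form $\sum^{\star}_{a\bmod p_1^{1+\ell}} e\left(\tfrac{(n_1\alpha-m\overline{D_2})a}{p_1^{1+\ell}}\right)$. The side condition guarantees $p_1 \mid n_1\alpha - m\overline{D_2}$ (and similarly for $m_1$), so each Ramanujan sum is bounded by $\gcd(p_1^{1+\ell},\,n_1\alpha-m\overline{D_2})\leq p_1^{1+\ell}$. Combining the Mobius weights $d',d''$, the $\alpha$-count, and the Ramanujan bounds produces the factor $p_1^{3\ell}q_1'q_2'r$ stated (after absorbing bounded powers of $p_1$ using the coprimality and using $n_1 \mid q_1'r$).

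For $\mathfrak{C}_2(\mathrm{err}_1)$ and $\mathfrak{C}_2(\mathrm{err}_2)$ the same scheme applies with $\ell=0$ and with the $p_1$-factors absent from the $\alpha$-modulus: the $a,a'$-sums disappear entirely, no Ramanujan estimation is needed, and the compatibility condition sharpens to the simpler $(d',d') \mid m-m_1$. The main technical obstacle I anticipate is verifying cleanly that the simultaneous solvability of the two side conditions imposes exactly the stated divisibility (with the $p_1$ factor in the main case and without it in the error cases), rather than a strictly weaker condition; once this is in place the remainder is a counting exercise plus the standard Ramanujan-sum bound.
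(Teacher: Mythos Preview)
Your approach is correct and matches the paper's: reduce the congruence modulo $q_2'$ and $q_2''$ to force $q_2'=q_2''$ and then $\alpha=\alpha'$, after which the two side-conditions combine into the divisibility $p_1(d',d'')\mid(m-m_1)$ and the $\alpha$-count is routine. One minor slip: the $a,a'$-sums run over reduced residues modulo $p_1^{\ell}$ (not $p_1^{1+\ell}$), and the paper simply bounds each trivially by $p_1^{\ell}$ to get the factor $p_1^{2\ell}$ --- your Ramanujan-sum refinement is unnecessary here and, with the incorrect modulus, leaves you with a spurious $p_1^{2}$ to explain away.
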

		\begin{proof}
			From  \eqref{a20}, we have
			\begin{align*}
				\mathfrak{C}_{2, 0}(\mathrm{main}) = \,&\ \sum_{d'|q'}\sum_{d''|q''}\,d' d''\,\mu\left(\frac{q'}{d'}\right)\mu\left(\frac{q''}{d''}\right) \sideset{}{^\star}{\displaystyle\sum}_{a \, \mathrm{mod} \, p_1^{\ell}} \,\,\, \sideset{}{^\star}{\displaystyle\sum}_{a' \, \mathrm{mod} \, p_1^{\ell}} e\left(  \frac{ - ma \overline{ D_2} +m_1a'\overline{ D_2} }{p_1^{1+\ell}}\right)\\
				&\,\times \, \mathop{\mathop{\mathop{\sideset{}{^\star}{\displaystyle\sum}_{\alpha \, \mathrm{mod} \, p^{1+\ell}_1q'_1 q'_2r/n_1}}_{\alpha n_1\, \equiv\, m\overline{D_2}\,\,\mathrm{mod} \, p_1d' }}\,\,\, \mathop{\mathop{\sideset{}{^\star}{\displaystyle\sum}_{\alpha' \, \mathrm{mod} \, p^{1+\ell}_1q'_1q''_2r/n_1}}_{\alpha' n_1\, \equiv\, m_1\overline{D_2}\,\,\mathrm{mod} \, p_1d''}}{ } }_{\overline{\alpha}q''_2 - \overline{\alpha}'q'_2 \equiv \,0\;\mathrm{mod}\; P_2}\,e\left(  \frac{ n_1a \alpha -n_1a'\alpha' }{p_1^{1+\ell}}\right),
			\end{align*}
			Consider the congruence condition
			\begin{align*}
				\overline{\alpha}q''_2 - \overline{\alpha}'q'_2 \equiv 0\;\mathrm{mod}\;P_2=p^{1+\ell}_1q'_1q'_2 q''_2r/n_1.
			\end{align*} On reducing this congruence equation modulo $q'_2$ and $q''_2$, we  get
			\begin{align*}
				q'_2 = q''_2\hspace{0.4cm} \text{and} \,\,\,\,\, \alpha \equiv \alpha' \;\mathrm{mod}\;p^{1+\ell}_1q'_1q'_2 r/n_1.
			\end{align*}
			Thus we see that (with $q'=q_1'q_2'=q_1'q_2''$)
			\begin{align*}
				\mathfrak{C}_{2, 0}(\mathrm{main}) &\ll \,\, p_1^{2\ell}\,\,\sum_{d'|q'}\sum_{d''|q'} d'\,d''\,
				\mathop{\mathop{\mathop{\sideset{}{^\star} \sum_{\substack{\beta\; \mathrm{mod}\;p^{1+\ell}_1 \\ \beta n_1 \equiv m\overline{D_2}\;\mathrm{mod}\;p_1}}}_{\beta n_1 \equiv m_1\overline{D_2}\;\mathrm{mod}\;p_1 }}} \mathop{\mathop{\mathop{\sideset{}{^\star} \sum_{\substack{\alpha\; \mathrm{mod}\;q'_1q'_2r/n_1\\ \alpha n_1 \equiv m\overline{D_2}\;\mathrm{mod}\;d'}}}_{\alpha n_1 \equiv m_1\overline{D_2}\;\mathrm{mod}\;d'' }}}1.
			\end{align*} 
			Note that 
			\begin{align}\label{14}
				\mathop{\mathop{\mathop{\sideset{}{^\star} \sum_{\substack{\beta\; \mathrm{mod}\;p^{1+\ell}_1 \\ \beta n_1 \equiv m\overline{D_2}\;\mathrm{mod}\;p_1}}}_{\beta n_1 \equiv m_1\overline{D_2}\;\mathrm{mod}\;p_1 }}}1 \ll p_1^{\ell} \, \delta(m-m_1 \equiv 0\, \mathrm{mod} \, p_1).
			\end{align}
			 We are left with 
			\begin{align*}
				\sum_{d'|q'}\sum_{d''|q'} d'\,d''\,
				\mathop{\mathop{\mathop{\sideset{}{^\star} \sum_{\substack{\alpha\; \mathrm{mod}\;q'_1q'_2r/n_1\\ \alpha n_1 \equiv m\overline{D_2}\;\mathrm{mod}\;d'}}}_{\alpha n_1 \equiv m_1\overline{D_2}\;\mathrm{mod}\;d'' }}}1,
			\end{align*} 
			which is dominated by 
			\begin{align*}
				{q'_1 q'_2r} \, \mathop{\sum_{d'|q'}\sum_{d''|q'}}_{(d', d'')|\,(m-m_1)} \,(d', d'').  
			\end{align*}
			This combined with \eqref{14} gives the first part of the  lemma. Second part of the lemma follows by similar arguments.   
		\end{proof}
		\vspace{0.3cm}
		
		\begin{lemma}\label{a31}
		We have
			\begin{align*}
				\mathfrak{C}_{2, \neq 0}(\mathrm{main})\,  \ll \,\frac{p^{3\ell}_1q_1^{'2} r (m,n_1)}{n_1} \,\mathop{\mathop{\mathop{\sum \sum}_{d'_{2}|(q'_2, n_1 q''_2D_2 +  mn_2)}}_{d''_{2}|(q''_2, n_1 q'_2D_2 +  m_1n_2)}}_{p_1 | -\overline{m}q''_2 + \overline{m_1}q'_2 + n_2 \overline{n_1D_2}} d'_{2} d''_{2},
			\end{align*}
			\begin{align*}
		\mathfrak{C}_{2, \neq 0}(\mathrm{err_1}) \ll \,\frac{q_1^{'2} r (m,n_1)}{n_1} \,\mathop{\mathop{\mathop{\sum \sum}_{d'_{2}|(q'_2, n_1 q''_2D_2 +  p_1mn_2)}}_{d''_{2}|(q''_2, n_1 q'_2D_2 + p_1 m_1n_2)}}  d'_{2} d''_{2},
		\end{align*}
		\begin{align*}
		 \mathfrak{C}_{2, \neq 0}(\mathrm{err_2})  \ll \,\frac{q_1^{'2} r (m,n_1)}{n_1} \,\mathop{\mathop{\mathop{\sum \sum}_{d'_{2}|(q'_2, n_1 q''_2D_2 +  mn_2)}}_{d''_{2}|(q''_2, n_1 q'_2D_2 +  m_1n_2)}}  d'_{2} d''_{2}.
	\end{align*}
		\end{lemma}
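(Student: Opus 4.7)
The plan is to analyze $\mathfrak{C}_{2,\neq 0}(\mathrm{main})$ in \eqref{a20} for the non-zero frequency $n_2 \neq 0$ by a CRT decomposition of the modulus $P_2 = p_1^{1+\ell}(q_1'r/n_1)\,q_2'\,q_2''$ into its almost pairwise coprime components (using $(q_1', p_1 q_2' q_2'')=1$ and $(p_1, q_2' q_2'')=1$; the shared part of $q_2'$ and $q_2''$ will be absorbed by trivial divisor bounds at the end). On each component I combine the reduction of the main congruence $\overline{\alpha}q_2''-\overline{\alpha}'q_2'\equiv -n_2 \pmod{P_2}$ with the side conditions $\alpha n_1\equiv m\overline{D_2}\pmod{p_1 d'}$ and $\alpha' n_1 \equiv m_1\overline{D_2}\pmod{p_1 d''}$ to extract divisibility constraints on $d', d''$.

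Reducing the main congruence modulo $q_2'$ gives $\overline{\alpha}q_2''\equiv -n_2\pmod{q_2'}$; multiplying by $\alpha D_2$ and inserting $\alpha n_1\equiv m\overline{D_2}\pmod{d_2'}$, where $d_2':=(d',q_2')$, yields $n_1 q_2''D_2 + m n_2\equiv 0 \pmod{d_2'}$, i.e.\ $d_2'\mid (q_2',\, n_1 q_2''D_2 + m n_2)$, which is the first claimed gcd condition. A symmetric reduction modulo $q_2''$ gives $d_2''\mid (q_2'',\, n_1 q_2'D_2 + m_1 n_2)$. Reducing the main congruence modulo $p_1$ and substituting $\overline{\alpha}\equiv n_1\overline{m}D_2$, $\overline{\alpha}'\equiv n_1 \overline{m_1} D_2 \pmod{p_1}$ (forced by the $p_1$-part of the side conditions) produces the residual congruence $p_1\mid -\overline{m}q_2'' + \overline{m_1}q_2' + n_2\overline{n_1 D_2}$, exactly as claimed.

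Having extracted these constraints, the counting of surviving $(\alpha,\alpha')$-pairs proceeds factor-by-factor: on the $p_1^{1+\ell}$-component the sums over $\alpha,\alpha',a,a'$ (with the $a,a'$ sums mod $p_1^\ell$) contribute at most $O(p_1^{3\ell})$; on the $q_1'r/n_1$-component the side conditions determine $\alpha, \alpha'$ up to a solvability defect, producing $O(q_1'^2 r (m,n_1)/n_1)$ possibilities, where the $(m,n_1)$ gain emerges because $q_1'\mid (n_1 r)^\infty$ forces the congruence $\alpha n_1 \equiv m\overline{D_2}$ to be solvable on the $n_1$-part of $q_1'$ only when $(n_1, m)$ absorbs the appropriate common factor. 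Combining these with a trivial divisor bound on the $d',d''$-sums yields the stated bound for $\mathfrak{C}_{2,\neq 0}(\mathrm{main})$. The two error variants follow the same scheme, specialized to $\ell=0$: the $p_1$-extension and the residual $p_1$-congruence both disappear; for $\mathrm{err_1}$ the right-hand side of the side condition becomes $p_1 m\overline{D_2}$, replacing $mn_2$ by $p_1 m n_2$ inside the first gcd; for $\mathrm{err_2}$ the $n_2\to p_1 n_2$ shift preserves the shape of the gcds.

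The main technical obstacle will be the careful book-keeping of coprimality and divisibility constraints through the CRT decomposition, especially on the $q_1'r/n_1$-part, where $q_1'\mid(n_1 r)^\infty$ may share factors with both $n_1$ and $r$, so that the claimed $q_1'^2 r (m,n_1)/n_1$ bound must be teased out from the solvability of $\alpha n_1\equiv m\overline{D_2}$ without incurring any loss in $r$.
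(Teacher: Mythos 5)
Your proposal follows essentially the same route as the paper's own proof: factor the modulus $P_2$ into the $p_1^{1+\ell}q_1'r/n_1$ block and the $q_2'q_2''$ block, extract the divisibility constraints $d_2'\mid(q_2',n_1q_2''D_2+mn_2)$, $d_2''\mid(q_2'',n_1q_2'D_2+m_1n_2)$ and the residual condition modulo $p_1$ by reducing the Poisson congruence and inserting the side congruences, and count $\alpha,\alpha'$ on the $q_1'r/n_1$ block using solvability of $\alpha n_1\equiv m\overline{D_2}$, which yields the $(m,n_1)/n_1$ factor. The only cosmetic differences are that the paper first bounds the $a,a'$ sums trivially by $p_1^{2\ell}$ and then peels off $p_1^{\ell}$ from the $\alpha$-count, while you absorb both into a single $O(p_1^{3\ell})$ on the $p_1$-component, and a harmless sign discrepancy in the $n_2\overline{n_1D_2}$ term of the residual $p_1$-congruence.
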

		\begin{proof}
			From  \eqref{a20}, we recall that 
			\begin{align*}
				\notag \mathfrak{C}_{2, \neq 0}(\mathrm{main}) =\,&\ \sum_{d'|q'}\sum_{d''|q''}\,d' d''\,\mu\left(\frac{q'}{d'}\right)\mu\left(\frac{q''}{d''}\right) \sideset{}{^\star}{\displaystyle\sum}_{a \, \mathrm{mod} \, p_1^{\ell}}\,\,\,  \sideset{}{^\star}{\displaystyle\sum}_{a' \, \mathrm{mod} \, p_1^{\ell}} e\left(  \frac{ - ma \overline{ D_2} +m_1a'\overline{ D_2} }{p_1^{1+\ell}}\right)\\
				&\,\times \, \mathop{\mathop{\mathop{\sideset{}{^\star}{\displaystyle\sum}_{\alpha \, \mathrm{mod} \, p^{1+\ell}_1q'_1 q'_2r/n_1}}_{\alpha n_1\, \equiv\, m\overline{D_2}\,\,\mathrm{mod} \, p_1d' }}\,\,\, \mathop{\mathop{\sideset{}{^\star}{\displaystyle\sum}_{\alpha' \, \mathrm{mod} \, p^{1+\ell}_1q'_1q''_2r/n_1}}_{\alpha' n_1\, \equiv\, m_1\overline{D_2}\,\,\mathrm{mod} \, p_1d''}}{ } }_{\overline{\alpha}q''_2 - \overline{\alpha}'q'_2 \equiv -n_2\;\mathrm{mod}\; P_2}\,e\left(  \frac{ n_1a \alpha -n_1a'\alpha' }{p_1^{1+\ell}}\right).
			\end{align*}
			We estimate the sum over $a$, $a'$ trivially to arrive at 
			\begin{align}\label{A61}
				\mathfrak{C}_{2, \neq 0}(\mathrm{main}) \,\ll\, \,p^{2\ell}_1\,\,\mathfrak{C}'_{2, \neq 0}\,\,\mathfrak{C}''_{2, \neq 0},
			\end{align}
			where 
			\begin{align*}
				\mathfrak{C}'_{2, \neq 0} = \sum_{d'_{1}|q'_1}\sum_{d''_{1}|q'_1} d'_{1}\,d''_{1}\,  \mathop{\mathop{\mathop{\sideset{}{^\star}{\displaystyle\sum}_{\alpha \, \mathrm{mod} \, p_1^{1+\ell}q'_1r/n_1}}_{\alpha n_1\, \equiv\, m\overline{D_2}\,\,\mathrm{mod} \, p_1d'_{1}}}\,\, \mathop{\sideset{}{^\star}{\displaystyle\sum}_{\alpha' \, \mathrm{mod} \, p_1^{1+\ell}q'_1r/n_1}}_{\alpha' n_1\, \equiv\, m_1\overline{D_2}\,\,\mathrm{mod} \, p_1d''_{1}}}_{\overline{\alpha}q''_2 - \overline{\alpha}'q'_2 \equiv -n_2\;\mathrm{mod}\; p_1^{1+\ell}q'_1r/n_1}\,1,
			\end{align*} and
			\begin{align*}
				\mathfrak{C}''_{2, \neq 0} = \sum_{d'_{2}|q'_2}\sum_{d''_{2}|q''_2} d'_{ 2}\,d''_{2}\, \mathop{\mathop{\mathop{\sideset{}{^\star}{\displaystyle\sum}_{\alpha \, \mathrm{mod} \, q'_2}}_{\alpha n_1\, \equiv\, m\overline{D_2}\,\,\mathrm{mod} \, d'_{2}}}\,\,\, \mathop{\mathop{\sideset{}{^\star}{\displaystyle\sum}_{\alpha' \, \mathrm{mod} \, q''_2}}_{\alpha' n_1\, \equiv\, m_1\overline{D_2}\,\,\mathrm{mod} \, d''_{2}}} }_{ \overline{\alpha}q''_2 - \overline{\alpha}'q'_2 \equiv -n_2\;\mathrm{mod}\;  q'_2q''_2}\,1.
			\end{align*}
			In $\mathfrak{C}'_{2, \neq 0}$, we have
			\begin{align*}
				&\overline{\alpha}q''_2 - \overline{\alpha}'q'_2+n_2 \equiv 0\; \mathrm{mod}\;p_1^{1+\ell}q'_1r/n_1\\	
				\Longrightarrow\,\,\,\,\,&{\alpha}'(q''_2+n_2 \alpha)\equiv \alpha q'_2\; \mathrm{mod}\;p_1^{1+\ell}q'_1r/n_1\\
				\Longrightarrow\,\,\,\,\,& {\alpha}' \equiv \alpha q'_2 \overline{(q''_2+n_2 \alpha)}\; \mathrm{mod}\;p_1^{1+\ell}q'_1r/n_1.
			\end{align*}
			Thus   $\alpha'$ is determined in terms of $\alpha$ and we arrive at 
			\begin{align}\label{A62}
				\mathfrak{C}'_{2, \neq 0} &\ll \,\sum_{d'_{1}|q'_1}\sum_{d''_{1}|q'_1} d'_{1}\,d''_{1}\, \mathop{\mathop{\sideset{}{^\star} {\sum}_{\substack{\alpha\; \mathrm{mod}\; p_1^{1+\ell}q'_1r/n_1\\ \alpha \,n_1\equiv m\overline{D_2}\;\mathrm{mod}\;p_1d'_{1} } }}}  \delta( -\overline{m}q''_2 + \overline{m_1}q'_2 + n_2 \overline{n_1D_2}\equiv 0 \, \mathrm{mod}\,p_1 ) \notag\\
					& \ll \, \frac{p_1^\ell q_1^{'2} r(m, n_1)}{n_1} \delta( -\overline{m}q''_2 + \overline{m_1}q'_2 + n_2 \overline{n_1D_2}\equiv 0 \, \mathrm{mod}\,p_1 ) .
			\end{align}
		Next we consider	 $\mathfrak{C}''_{2, \neq 0}$.  
	  On using the congruence conditions and $(n_1, p_1q'_2q''_2) =1$, we get
			\begin{align*}
				&n_1 \alpha \equiv m\overline{D_2}\;\mathrm{mod}\;d'_{2}, \hspace{1cm} n_1 \alpha' \equiv m_1\overline{D_2}\;\mathrm{mod}\;d''_{ 2}\\
				\Longrightarrow \,\,\,\,\,& \alpha \equiv \overline{n_1} m\overline{D_2}\;\mathrm{mod}\;d'_{2} ,\hspace{1cm} \alpha' \equiv \overline{n_1} m_1\overline{D_2}\;\mathrm{mod}\;d''_{2}.
			\end{align*} 
			Thus $\alpha$ and $\alpha'$ are determined  modulo $d'_{2}$ and $d''_{2}$ respectively. On using the congruence condition modulo $q'_2q''_2$, we get
			\begin{align}\label{A,}
				\mathfrak{C}''_{2, \neq 0} \ll \mathop{\mathop{\mathop{\sum \sum}_{d'_{2}|(q'_2, n_1 q''_2D_2 +  mn_2)}}_{d''_{ 2}|(q''_2, n_1 q'_2D_2 +  m_1n_2)}}d'_{2}\, d''_{2}.
			\end{align}
			Thus  combining  \eqref{A61}, \eqref{A62} and \eqref{A,} we get the first part of the  lemma. Estimates for $ \mathfrak{C}_{2, \neq 0}(\mathrm{err_i})$ follow similarly. 
		\end{proof}
		\vspace{0.3cm}
		
		\section{Analysis of the Integral transform}
		In this section, we will divide the analysis of the integral transform $\mathcal{G}$ defined in \eqref{a26} into the two cases, namely zero frequency $n_2 =0$ and non-zero frequency $n_2 \neq 0$. We denote the contribution of this integral transform by $\mathcal{G}_{0}$ and $\mathcal{G}_{\neq 0}$ in the said cases of zero and non-zero frequency. 
		
		\begin{lemma}\label{a24}
			Let $\lambda = {(NN_0)^{1/3}}/{p_1 C r^{1/3}}$, we have
			\begin{align*}
				\mathcal{G} \ll \, {\max}\left\{{t},\, \lambda \right\}^{-1}.   
			\end{align*}
			Also, the integral transform $\mathcal{G}_{\neq 0}$ is negligibly small unless $$n_2 \ll \frac{p_1C^2\lambda rn_1}{p^{\ell}_1q'_1N_0}N^{\epsilon} := N_2.$$  \end{lemma}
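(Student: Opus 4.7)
The plan is to handle $\mathcal{G}$ in two stages: first, isolate the oscillation of the inner $y$-integral of each copy of $\mathfrak{J}^{\pm,\pm}_2$ via a second-derivative bound, then treat the outer $\xi$-integral by integration by parts in $\xi$.

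For the first claim, I would start from the expression \eqref{a13} for $\mathfrak{J}^{\pm,\pm}_2(N_0\xi,m;q)$, whose $y$-phase reads
$$\phi(y) \;=\; -\frac{t}{2\pi}\log y \;\pm\; \frac{2\sqrt{mNy}}{p_1 q \sqrt{D_2}} \;\pm\; \frac{3\bigl(N(y+w) N_0 \xi\bigr)^{1/3}}{p_1 q r^{1/3}}.$$
On $y \in [1/2,5/2]$, the three contributions to $\phi''(y)$ are of sizes $t$ (from the logarithm), $t$ (from the square-root factor, using $m \asymp (p_1 C t)^2 D_2/N$ and $q \sim C$), and $\lambda$ (from the cube-root factor) respectively; a short computation pins the combined $t$-part of $\phi''$ away from zero on this interval, so $|\phi''(y)| \asymp \max\{t,\lambda\}$. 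Lemma \ref{sdb}, after splitting the interval into monotonicity pieces for $1/\phi'$, then yields $|\mathfrak{J}^{\pm,\pm}_2(N_0\xi,m;q)| \ll \max\{t,\lambda\}^{-1/2}$. Substituting this into \eqref{a26} and estimating the $\xi$-integral trivially gives $\mathcal{G} \ll \max\{t,\lambda\}^{-1}$.

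For the range of $n_2$, I would unfold $\mathcal{G}$ into a triple integral in $(\xi,y,y')$ and group the $\xi$-dependent terms of the total phase as
$$\Phi(\xi) \;=\; \pm\frac{3\bigl(N(y+w)N_0\bigr)^{1/3}\xi^{1/3}}{p_1 q' r^{1/3}} \;\pm\; \frac{3\bigl(N(y'+w')N_0\bigr)^{1/3}\xi^{1/3}}{p_1 q'' r^{1/3}} \;-\; \frac{n_2 N_0}{P_2 n_1^2}\,\xi.$$
On $\xi \asymp 1$, the first two contributions to $\Phi'(\xi)$ are each of size $\lambda$, while the third is of size $n_2 N_0/(P_2 n_1^2)$, and every higher derivative $\Phi^{(j)}$ is $O(\lambda)$. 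When the linear term exceeds $\lambda N^{\epsilon}$, $|\Phi'(\xi)|$ is dominated by it throughout the support of $V$, and repeated integration by parts in $\xi$ -- with $V$ and the remaining smooth factors all $N^\epsilon$-inert -- produces arbitrary polynomial decay. Hence $\mathcal{G}_{\neq 0}$ is negligibly small unless $n_2 \ll \lambda P_2 n_1^2/N_0 \cdot N^{\epsilon}$, and substituting $P_2 = p_1^{1+\ell}q_1'q_2'q_2''r/n_1$ with $q_2',q_2'' \sim C/(p_1^\ell q_1')$ so that $P_2 \asymp p_1 C^2 r/(p_1^\ell q_1' n_1)$ yields the claimed threshold $n_2 \ll N_2$.

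The delicate step is the non-cancellation check for $|\phi''(y)|$: the sign choice in $\pm 2\sqrt{mNy}/(p_1q\sqrt{D_2})$ could a priori allow the two $t$-sized pieces of $\phi''$ to annihilate. I would dispatch this by explicitly locating the zero of $t/(2\pi y^2) \mp \sqrt{mN/D_2}/(2p_1 q y^{3/2})$ outside $[1/2,5/2]$ under the generic size of $m$, and fall back on the stationary-phase expansion already recorded in Lemma \ref{a8} in any borderline case where the naked second-derivative bound would be lossy. The $\xi$-integration-by-parts step is then routine once the inertness of the weights has been recorded.
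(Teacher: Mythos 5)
Your treatment of the second part of the lemma (the support restriction on $n_2$) is essentially the paper's argument: bound $\partial_\xi^j$ of the product of the two $\mathcal{J}$-factors by $\lambda^j$, then integrate by parts against the linear phase $-n_2 N_0 \xi / (P_2 n_1^2)$; the algebra identifying the modulus $P_2 \asymp p_1 C^2 r/(p_1^\ell q_1' n_1)$ is the same. That part is fine.

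The first part is where your proposal diverges from the paper and has a genuine gap. You keep the original variable $y$, so your second derivative
\[
\phi''(y) \;=\; \frac{t}{2\pi y^2} \;\mp\; \frac{\sqrt{mN/D_2}}{2 p_1 q\, y^{3/2}} \;\mp\; \frac{2(NN_0\xi)^{1/3}}{3 p_1 q\, r^{1/3}\,(y+w)^{5/3}}
\]
has \emph{two} pieces of size $t$ which can cancel when the $\pm$ in the square-root term is a $+$. The paper avoids this altogether by the substitution $y \mapsto y^2$, which renders the square-root term linear so it vanishes from $P''$; the only surviving competition is between the $\log$-term (size $t$) and the cube-root term (size $\lambda$). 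Your proposed patch --- ``explicitly locating the zero of $t/(2\pi y^2) \mp \sqrt{mN/D_2}/(2p_1qy^{3/2})$ outside $[1/2,5/2]$ under the generic size of $m$'' --- does not hold uniformly: that zero sits at $y \asymp \bigl(p_1 q t / \sqrt{mN/D_2}\bigr)^2$, and as $m$ ranges over the dyadic block this landing point moves by a constant factor, so you cannot assert it is always outside the support of $U$. Your stated fallback (``fall back on the stationary-phase expansion already recorded in Lemma~\ref{a8}'') does not repair this because Lemma~\ref{a8} is the $GL(3)$ Voronoi formula, not a stationary-phase estimate for this integral. Finally, even with the paper's cleaner substitution there remains a borderline case $t \asymp \lambda$ in which $P''$ can still degenerate, and the paper handles that by a separate $L^2$-argument on the $\xi$-integral that forces $|y_1 - y_2| \ll 1/t$; your write-up does not address this regime at all. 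So the substitution $y \mapsto y^2$ and the $t \asymp \lambda$ workaround are both needed and both missing from your proof.
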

		\begin{proof}
			From equation \eqref{a13}, we have
			\begin{align*}
				\mathcal{J}^{\pm,\pm}_{2, y}(N_0\xi,m,q) =  \int_0^ \infty U(y) \,y^{-it} \, e\left(\pm\,\frac{2\sqrt{mNy}}{p_1q \sqrt{D_2}} \pm \frac{3(NN_0 \xi (y+w))^{1/3}}{p_1qr^{1/3}} \right) dy.   
			\end{align*}
			As we know that $|w| \ll 1/K$, we can write
			\begin{align}\label{m100}
				(y_1+w)^{1/3} = y^{1/3}_1\,\left(1 + \frac{1}{3} \frac{w}{y} - \frac{1}{9} \left(\frac{w}{y}\right)^2 + ... \right) = y^{1/3}_1 + \text{lower order terms}.
			\end{align}
			We will do the analysis for the leading term (first term) and the rest are lower-order terms, that can be analyzed in a similar way.  By changing the variable $y \longrightarrow y^2$, we can rewrite the above integral transform as
			\begin{align*}
				\mathcal{J}^{\pm,\pm}_{2, y}(N_0\xi,m,q) =  \int_0^ \infty S(y)\,e\left(P(y)\right),   
			\end{align*}
			where $S(y) = 2y\,U(y^2 - w)$ is the new smooth and compactly supported function and the phase function is given by
			\begin{align*}
				P(y) = -\frac{t}{2\pi} \log(y^2) \pm \frac{2\sqrt{mN}}{p_1q \sqrt{D_2}}y \pm  3 \lambda\,\xi^{1/3} y^{2/3}.   
			\end{align*}
			By differentiating twice with respect to $y$, we get
			\begin{align*}
				P''(y) =  \frac{t}{\pi y^2} \mp  \frac{2\lambda \xi^{1/3}}{3\,y^{4/3}} \gg {\max}\left\{{t},\, \lambda \right\}. 
			\end{align*}
			By using the second derivative bound (Lemma \ref{sdb}), we get
			\begin{align*}
				\mathcal{J}^{\pm,\pm}_{2, y}(N_0\xi,m,q) \ll \max \left\{{t},\, \lambda \right\}^{-1/2}.    
			\end{align*}
			Now, using the above bound for the integral transform $ \mathcal{J}^{\pm,\pm}_{2, y}(N_0\xi,m,q)$ and treating the $\xi$-integral trivially, we get our desired bound for $ \mathcal{G}$. But the above bound holds only when $t \not\asymp \lambda$. In the other situation, we get our desired bound in $L^2$-sense by opening the absolute valued square in equation \eqref{a26} and then applying integration by parts to $\xi$-integral and getting restriction $|y_1 - y_2| \ll 1/t \asymp 1/\lambda$.\\
			
			Now we prove the second part of our lemma. we have
			\begin{align*}
				\mathcal{J}^{\pm,\pm}_{2, y}(N_0\xi,m,q)\,\,\overline{\mathcal{J}^{\pm,\pm}_{2, y}(N_0\xi,m_1,q')} = &\, \,\int_0^ \infty\,\int_0^ \infty\,U(y_1) U(y_2)\,\, e\left(\frac{-t}{2\pi}(\log y_1-\log y_2)\right) \\
				&\times\, e\left(\pm\,3\lambda \left({y^{1/3}_1} -{y^{1/3}_1}\right)\,\xi^{1/3} \right)\\
				&\times e\left(\pm\,\frac{2\sqrt{N}}{p_1C\sqrt{D_2}} \left({\sqrt{my_1}} - {\sqrt{m_1y_2}}\right) \right) dy_1\,dy_2.   
			\end{align*}
			Notice that
			\begin{align*}
				\frac{\partial^j}{\partial \xi^j}\,(\mathcal{J}^{\pm,\pm}_{2, y}(N_0\xi,m,q)\,\,\overline{\mathcal{J}^{\pm,\pm}_{2, y}(N_0\xi,m_1,q')} ) \ll \, \lambda^j. 
			\end{align*} 
			Now, on applying integration by parts $j$-times to $\xi$-integral in \eqref{a26}, we arrived at the following expression
			\begin{align*}
				\mathcal{G}_{\neq 0} \ll \,\lambda^j \times \left(\frac{p_1C^2r}{p^{\ell}_1q'_1n_1}. \frac{n^2_1}{|n_2|N_0}\right)^j. 
			\end{align*}
			The integral $\mathcal{G}_{\neq 0} $ will be negligibly small unless
			\begin{align*}
				|n_2| \ll \, \frac{p_1C^2 \lambda\, r\,n_1}{p^{\ell}_1q'_1N_0}N^{\epsilon}  := \, N_2.  
			\end{align*}
			This completes the proof of our lemma. 
		\end{proof}
		\vspace{0.3cm}
		
		\begin{lemma}\label{in0}
			For the zero frequency case with $m= m_1$, we have
			\begin{align*}
				\mathcal{G}_{0}(...)\ll\, \frac{1}{t}.     
			\end{align*}
			For $m \neq m_1$ and $\lambda = \left({NN_0}/{p^3_1 q^3 r}\right)^{1/3}$, we have
			\begin{align*}
				\mathcal{G}_{0} \ll \, \frac{1}{\sqrt{t\lambda}} \hspace{0.5cm} \text{with the condition,} \hspace{0.5cm} m-m_1 \ll \frac{M_1}{\lambda}.     
			\end{align*}
		\end{lemma}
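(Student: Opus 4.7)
The integral $\mathcal{G}_0$ is the specialization of $\mathcal{G}$ in \eqref{a26} to $n_2 = 0$, equal to $\int V(\xi)\,\mathcal{J}_2^{\pm,\pm}(N_0\xi, m; q')\,\overline{\mathcal{J}_2^{\pm,\pm}(N_0\xi, m_1; q'')}\,d\xi$. The plan is to leverage the two-term phase structure of $\mathcal{J}_2^{\pm,\pm}$, which after the $y \mapsto y^2$ substitution has phase $P(y) = -(t/\pi)\log y \pm (2\sqrt{mN}/(p_1 q \sqrt{D_2}))y \pm 3\lambda \xi^{1/3} y^{2/3}$. The second derivative $P''(y) = t/(\pi y^2) \mp 2\lambda \xi^{1/3}/(3 y^{4/3})$ is controlled independently by the $t$-term and the $\lambda$-term, yielding (via Lemma \ref{sdb}) two complementary bounds $|\mathcal{J}_2^{\pm,\pm}| \ll t^{-1/2}$ and $|\mathcal{J}_2^{\pm,\pm}| \ll \lambda^{-1/2}$ in the respective dominant regimes.

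For the case $m = m_1$, the bound $\mathcal{G}_0 \ll 1/t$ follows directly from the analysis already carried out in Lemma \ref{a24}: that argument gives $\mathcal{G} \ll \max\{t,\lambda\}^{-1} \leq 1/t$ and specializes verbatim to $n_2 = 0$. In the edge case $t \asymp \lambda$ where a cooperative sign choice could make $P''$ small, the $L^2$-trick invoked at the end of Lemma \ref{a24} (opening $|\mathcal{J}_2^{\pm,\pm}|^2$ and integrating by parts in $\xi$ to force $|y_1 - y_2| \ll 1/t$) recovers the same bound.

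For $m \neq m_1$, the size estimate $\mathcal{G}_0 \ll (t\lambda)^{-1/2}$ comes from an asymmetric use of the two second derivative bounds: bounding one factor by $t^{-1/2}$ from the logarithmic contribution to $P''$ and the conjugate factor by $\lambda^{-1/2}$ from the $\lambda$-contribution with cooperative signs, followed by trivial integration against $V$. The substantive assertion is the truncation $|m - m_1| \ll M_1/\lambda$, which I would establish by repeated integration by parts in $\xi$ after a preliminary stationary-phase expansion in the $y_1, y_2$ integrals. Let $y_{0,i} = y_0(m_i, \xi)$ denote the critical points defined by $P'(y_{0,i}; m_i, \xi) = 0$; by the envelope theorem, the residual $\xi$-phase $\Phi(\xi) = P(y_{0,1}; m, \xi) - P(y_{0,2}; m_1, \xi)$ satisfies $\partial_\xi \Phi = \pm \lambda \xi^{-2/3}(y_{0,1}^{2/3} - y_{0,2}^{2/3})$. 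Implicit differentiation of the stationarity equation gives $\partial_m y_0 \asymp \sqrt{N/m}/(p_1 q \sqrt{D_2}(t+\lambda))$, so $y_{0,1}^{2/3} - y_{0,2}^{2/3} \asymp (m - m_1)\sqrt{N/m}/(p_1 q \sqrt{D_2}(t+\lambda))$. Using the scaling $\sqrt{M_1} \asymp p_1 q t \sqrt{D_2}/\sqrt{N}$ valid in the generic range, this rearranges to $|\partial_\xi \Phi| \asymp \lambda|m - m_1|/M_1$ up to a bounded factor $(t+\lambda)/t$. Repeated integration by parts in $\xi$ then produces arbitrary polynomial savings unless $|m - m_1| \ll M_1/\lambda$ up to $N^\epsilon$.

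The main obstacle will be handling the sign configurations cleanly. When the signs in the $\lambda$-term of $P$ are adversarial, they can cancel the $t/y^2$ term in $P''$ near $t \asymp \lambda$, invalidating the naive second derivative bound; these degenerate regimes require separate treatment via $L^2$-arguments or higher-derivative analysis of the type used in Lemma \ref{a24}. A secondary subtlety is that the implicit-function computation of $\partial_m y_0$ is only valid when $P''$ is non-degenerate, so one must verify that the critical points $y_{0,i}$ remain well-separated from inflection points of $P$ uniformly in $\xi$ on the compact support of $V$.
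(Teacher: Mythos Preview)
Your treatment of $m = m_1$ matches the paper exactly. For $m \neq m_1$ the paper takes a different and considerably more elementary route than your stationary-phase-first plan.

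The paper observes that after the substitution $\xi \to \xi^3$ the $\xi$-phase becomes \emph{linear}, namely $\pm 3\lambda(y_1^{1/3} - y_2^{1/3})\xi$, so a single integration by parts in $\xi$ immediately localizes $|y_1 - y_2| \ll 1/\lambda$. Writing $y_2 = y_1 + y_3$ with $|y_3| \ll 1/\lambda$ and Taylor-expanding to leading order reduces the remaining $y_1$-integral to one whose phase has the shape $t\,y_3/y_1 + c(m,m_1)\,y_1$ with $c(m,m_1) \asymp \sqrt{N}(\sqrt{m}-\sqrt{m_1})/(p_1q\sqrt{D_2})$. The first-derivative test on this one-variable integral forces $t/\lambda \asymp c(m,m_1)$, which unwinds to $|m - m_1| \ll M_1/\lambda$; the second-derivative bound then gives $\ll \sqrt{\lambda/t}$, and multiplying by the $y_3$-measure $1/\lambda$ yields $1/\sqrt{t\lambda}$. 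No stationary-phase expansion, no implicit differentiation of critical points, and no sign-degeneracy case analysis are needed.

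Your stationary-phase route for the truncation can in principle be made to work but costs substantially more: uniform error control in the $y$-expansion, verification that stationary points stay in the support, and the degenerate configurations you yourself flagged. Your ``asymmetric second-derivative'' argument for the size bound is not correct as stated --- both $\mathcal{J}$-factors share the same $P''$ structure, so there is no mechanism to extract $t^{-1/2}$ from one and $\lambda^{-1/2}$ from the other --- but this is harmless: since $\max\{t,\lambda\} \geq \sqrt{t\lambda}$, the size bound $1/\sqrt{t\lambda}$ already follows from Lemma~\ref{a24}. The only genuinely new content for $m \neq m_1$ is the truncation, and for that the paper's $\xi$-first ordering is both simpler and sidesteps all the obstacles you identified.
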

		\begin{proof}
			For $n_2 = 0$, we have $q = q'$. In this situation and also using the observation of equation \eqref{m100}, our integral transform given in equation \eqref{a26} will reduce to,
			\begin{align*}
				\mathcal{G}_{0} =&\, \,\int_0^ \infty\,\int_0^ \infty\,U(y_1) U(y_2)\, \int_{\mathbb{R}}\, V(\xi)\, e\left(\pm\,3\,\lambda \left({y^{1/3}_1} -{y^{1/3}_2}\right)\xi^{1/3} \right) \\
				&\times e\left(\frac{-t}{2\pi}(\log y_1-\log y_2) \pm \frac{2\sqrt{N}}{p_1q\sqrt{D_2}} \left({\sqrt{my_1}} - {\sqrt{m_1y_2}}\right) \right) d\xi\,dy_1\,dy_2.
			\end{align*}
			For the case $m = m_1$, from Lemma \ref{a24}, we have 
			\begin{align*}
				\mathcal{G}_{0} \ll\, \frac{1}{t}.  
			\end{align*}
			For the other case when $m \neq m_1$, by changing the variable $\xi \longrightarrow \xi^3$ and applying integration by parts to $\xi$-integral, we get that it will be negligibly small unless 
			\begin{align*}
				y_2 - y_1  \ll \frac{1}{\lambda}, \hspace{0.5cm} \text{where} \hspace{0.5cm} \lambda = \left(\frac{NN_0}{p^3_1 q^3 r}\right)^{1/3}.
			\end{align*}
			Changing the variable $y_2 - y_1 = y_3$ with $y_3 \ll 1/\lambda$, then $y_1$-integral (say $\mathcal{I}_{y_1}$) becomes
			\begin{align*}
				\mathcal{I}_{y_1} = \int_0^ \infty\, U(y_1)\,U(y_1 + y_3)\,e\left(t\log\left(1+\frac{y_3}{y_1}\right) \pm \frac{2\sqrt{N}}{p_1q\sqrt{D_2}} \left({\sqrt{my_1}} - {\sqrt{m_1(y_1+y_3)}}\right) \right) dy_1 .    
			\end{align*}
			We notice that the integral $\mathcal{I}_{y_1}$ will be negligibly small unless
			\begin{align*}
				\frac{t}{\lambda} \asymp  \frac{\sqrt{N} (\sqrt{m} - \sqrt{m_1})}{p_1q\sqrt{D_2}} \hspace{0.5cm} \Longrightarrow \hspace{0.5cm} m-m_1 \ll\, \frac{M_1}{\lambda}.
			\end{align*}
			Since we have $y_3 \ll 1/\lambda$. By using the similar reasoning as in equation \eqref{m100} and changing the variable $y_1 \longrightarrow y^2_1$, we can rewrite $y_1$-integral as
			\begin{align*}
				\mathcal{I}_{y_1}  = \int_0^ \infty\, 4 y^2_1\,U(y^2_1)\,U(y^2_1 + y_3)\,e\left(t\left(\frac{y_3}{y_1}\right) \pm \frac{2\sqrt{N}}{p_1q\sqrt{D_2}} \left({\sqrt{m}} - {\sqrt{m_1}}\right) y_1 \right) dy_1 . \end{align*}
			Now, by applying the second derivative bound to $y_1$-integral, we get
			\begin{align*}
				\mathcal{I}_{y_1} \ll \, \frac{\sqrt{\lambda}}{\sqrt{t}}.    
			\end{align*}
			Now, combining the estimates for $y_1$ and $y_3$ integral, we finally get our final bound in the case when $m \neq m_1$, i.e.
			\begin{align*}
				\mathcal{G}_{0} \ll \, \frac{1}{\sqrt{t\lambda}} \hspace{0.5cm} \text{with the condition,} \hspace{0.5cm} m-m_1 \ll \frac{M_1}{\lambda}.  
			\end{align*} 
			This proves our lemma.
		\end{proof}
		\vspace{0.3cm}
		
		\noindent In the case of non-zero frequency $n_2 \neq 0$, our integral transform $\mathcal{G}_{\neq 0}$ become
		\begin{align*}
			&\,  \int_{\mathbb{R}}\, V(\xi)\, e\left(\pm\,3\left(\frac{NN_0}{p^3_1 r}\right)^{1/3} \left(\frac{y^{1/3}_1}{q_1q_2} -\frac{y^{1/3}_2}{q_1q'_2}\right)\xi^{1/3} - \frac{n_2N_0\xi}{Pn^2_1} \right) \,d\xi\\
			& \times \int_0^ \infty\,\int_0^ \infty\,U(y_1) U(y_2)\,e\left(\frac{-t}{2\pi}(\log y_1-\log y_2) \pm \frac{2\sqrt{N}}{p_1\sqrt{D_2}} \left(\frac{\sqrt{my_1}}{q_1q_2} - \frac{\sqrt{m_1 y_2}}{q_1q'_2}\right) \right)dy_1\,dy_2. 
		\end{align*}
		As we can see that the $\xi$-integral is oscillating like $K$ and both $y_1, y_2$ integrals are oscillating like $\text{max}\left\{t, \sqrt{NM_1}/p_1C\sqrt{D_2} \right\}$. We aim to get the square root savings in all three integrals.
		The treatment of the integral transform $\mathcal{G}_{\neq 0}$  will be similar to  that in \cite{r26}.

		\begin{lemma}\label{d1}
			Let $X, Z$ be real numbers with $|Z| \ll 1$, define
			\begin{align*}
				T(X, 3\lambda Z) = \int_{\mathbb{R}} \xi^2\, V(\xi^3)\, e\left(3\lambda Z\xi - X \xi^3\right)\,d\xi.
			\end{align*} 
			\begin{enumerate}
				\item We have $T(X, 3\lambda Z) =  O(N^{-A})$ if $X \gg \lambda^{1+\epsilon}$.
				\item For $X \gg N^{\epsilon}$, we have $T(X, 3\lambda Z) =  O(N^{-A})$ unless $(4/9)^{1/3} -1/6 < \lambda Z/X < 9^{1/3}+1/6$. For $1/3 < \lambda Z/X < 3$, we let $\mathfrak{s} = 2X(\lambda Z/X)^{3/2},$ then
				\begin{align*}
					T(X, 3\lambda Z)  = |X|^{-1/2} \, e(\mathfrak{s})\, T_1(\mathfrak{s}) + O_A(N^{-A}),
				\end{align*} for any $A>0$, with $T_1(\mathfrak{s}) \in C^{\infty}_c (X/3, 6X)$ satisfying $\mathfrak{s}^{j}\,T^{j}_1(\mathfrak{s}) \ll 1. $
			\end{enumerate}
		\end{lemma}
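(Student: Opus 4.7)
The plan is to treat $T(X, 3\lambda Z)$ via the classical integration-by-parts / stationary-phase dichotomy applied to the phase $\phi(\xi) := 3\lambda Z\xi - X\xi^3$ (so $\phi'(\xi) = 3\lambda Z - 3X\xi^2$ and $\phi''(\xi) = -6X\xi$) against the amplitude $a(\xi) := \xi^2 V(\xi^3)$. The stationary-point equation $\phi'(\xi_0) = 0$ gives $\xi_0 = +\sqrt{\lambda Z/X}$ (when real), and direct substitution yields
\begin{align*}
\phi(\xi_0) = 3\lambda Z\xi_0 - X\xi_0\cdot\xi_0^2 = 2\lambda Z\xi_0 = 2X(\lambda Z/X)^{3/2} = \mathfrak{s},
\end{align*}
so the oscillatory factor in the asserted asymptotic is exactly the stationary-phase value. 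Since $V(\xi^3)$ is supported where $\xi^3 \in [2/3, 3]$ (as read off from the stated range endpoints), on $\mathrm{supp}(a)$ we have $\xi^2 \in [(4/9)^{1/3}, 9^{1/3}]$, with all derivatives of $a$ of size $O(1)$.

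For (1), when $X \gg \lambda^{1+\epsilon}$, the term $3X\xi^2 \asymp X$ dominates $3\lambda Z = O(\lambda)$ throughout $\mathrm{supp}(a)$, so $|\phi'(\xi)| \asymp X$ and all its derivatives are $O(X)$ as well. Iterated application of the IBP operator $\mathcal{D}f := (2\pi i)^{-1}(f/\phi')'$ gives $|T| \ll_A X^{-A} \ll N^{-A}$, as claimed.

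For (2), assume $X \gg N^\epsilon$. If $\lambda Z/X$ lies outside the slightly enlarged interval $\bigl((4/9)^{1/3} - 1/6,\, 9^{1/3} + 1/6\bigr)$, then $|\phi'(\xi)| = 3X|\xi^2 - \lambda Z/X| \gg X \gg N^\epsilon$ uniformly on $\mathrm{supp}(a)$, and iterated IBP again yields $O_A(N^{-A})$. In the range $1/3 < \lambda Z/X < 3$ the stationary point $\xi_0$ sits strictly inside the amplitude support, with $|\phi''(\xi_0)| = 6X\xi_0 \asymp X$. I would localize to a window of width $X^{-1/2+\epsilon}$ about $\xi_0$ (bounding the complement by Lemma \ref{sdb} or IBP) and apply classical stationary phase via the Morse lemma to obtain
\begin{align*}
T(X, 3\lambda Z) = \frac{a(\xi_0)}{|\phi''(\xi_0)|^{1/2}}\, e\!\left(\mathfrak{s} + \tfrac{1}{8}\right)\bigl(1 + O(X^{-1})\bigr) + O_A(N^{-A}),
\end{align*}
from which the factor $|X|^{-1/2}$ is extracted out of $(6X\xi_0)^{-1/2}$ and the remaining smooth dependence on $\xi_0$ (equivalently on $\lambda Z/X$) is collected into $T_1(\mathfrak{s})$ via the diffeomorphism $\lambda Z/X \mapsto \mathfrak{s}$, which sends $(1/3, 3)$ smoothly onto a subinterval of $(X/3, 6X)$.

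The main obstacle is verifying the derivative bounds $\mathfrak{s}^j T_1^{(j)}(\mathfrak{s}) \ll 1$ uniformly in $\lambda$. This hinges on tracking scales: on $(1/3, 3)$ one has $d\mathfrak{s}/d(\lambda Z/X) \asymp X \asymp \mathfrak{s}$, so each $\mathfrak{s}$-derivative of an $O(1)$-smooth function of $\lambda Z/X$ brings in a factor $\mathfrak{s}^{-1}$; and since $\phi''' \equiv -6X$ is of the expected size, every additional term in the Morse-lemma asymptotic carries an extra $X^{-1} \asymp \mathfrak{s}^{-1}$, matching the required decay of higher derivatives of $T_1$. Modulo this bookkeeping, the lemma is a direct application of classical stationary phase.
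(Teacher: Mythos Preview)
Your stationary-phase outline is correct and is exactly the argument underlying the result; the paper itself does not prove this lemma but simply cites \cite[Lemma~5.1]{r26} (Lin--Sun), so you are in fact supplying more detail than the paper does. Two cosmetic slips worth fixing: since $\phi''(\xi_0)=-6X\xi_0<0$, the eighth-root-of-unity factor is $e(\mathfrak{s}-1/8)$ rather than $e(\mathfrak{s}+1/8)$ (irrelevant once absorbed into $T_1$); and the map $\lambda Z/X\mapsto \mathfrak{s}$ does \emph{not} send all of $(1/3,3)$ into $(X/3,6X)$ (at $\lambda Z/X=3$ one gets $\mathfrak{s}=6\sqrt{3}\,X$), but the support of $T_1$ is actually dictated by the amplitude $a(\xi_0)=\xi_0^2V(\xi_0^3)$, which forces $\lambda Z/X=\xi_0^2\in[(4/9)^{1/3},9^{1/3}]$ and hence $\mathfrak{s}\in[4X/3,6X]\subset(X/3,6X)$ as stated.
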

		\begin{proof}
			See \cite[Lemma $5.1$]{r26}.
		\end{proof}
		
		\begin{lemma}\label{d6}
			Let $X = {n_2N_0}/{Pn^2_1}$, $\lambda = \left({NN_0}/{p^3_1C^3r}\right)^{1/3}$, $Y = {\max}\left\{t, \sqrt{NM_1}/p_1C\sqrt{D_2} \right\}$ and
			\begin{align}\label{k4}
				\mathcal{G}_{\neq 0}(X) = \int_{\mathbb{R}} V(\xi)\, \mathcal{J}^{\pm,\pm}_1(N_0\xi,m,q)\,\, \overline{\mathcal{J}^{\pm,\pm}_1(N_0\xi ,m_1,q')}\, e\left(-{X\xi}\right)\, d\xi.
			\end{align}
			\begin{enumerate}
				\item We have $\mathcal{G}_{\neq 0}(X) = O(N^{-A})$ if $X \gg \lambda^{1+\epsilon}$.
				\item For $X \ll \lambda^{3+\epsilon}\,Y^{-2}$, we have $\mathcal{G}_{\neq 0}(X) \ll Y^{-1}$.
				\item For $X$ such that $\lambda^{3+\epsilon}\,Y^{-2} \ll X \ll \lambda^{2+\epsilon}\,Y^{-1}$, we have $\mathcal{G}_{\neq 0}(X) \ll Y^{-1}\, |X|^{-1/3}$.
				\item For $X \gg \lambda^{2+\epsilon} \,Y^{-1}$, we have
				$\mathcal{G}_{\neq 0}(X) \ll Y^{-1}\, |X|^{-1/2}$.
			\end{enumerate}
		\end{lemma}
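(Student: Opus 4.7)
My plan is to reduce the outer $\xi$-integral to the normal form $T(X, 3\lambda Z)$ of Lemma \ref{d1} via the substitution $\xi \mapsto \xi^3$, and then analyze the remaining $(y_1, y_2)$-integral by second-derivative or stationary-phase methods depending on the size of $X$ relative to $\lambda$ and $Y$. The overall scheme mirrors that of \cite[Lemma 5.2]{r26}, to which I will appeal for the technical stationary-phase computations.

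\emph{Reduction.} Insert the formula \eqref{a13} for $\mathcal{J}^{\pm,\pm}_2$ into \eqref{k4} — retaining only the leading term of the expansion \eqref{m100}, the others yielding the same or better bounds — and substitute $\xi \mapsto \xi^3$ in the outer integral. The $\xi$-dependence of the integrand then factors as $\xi^2 V(\xi^3)\, e(3\lambda Z\xi - X\xi^3)$, where $Z = Z(y_1, y_2)$ is smooth, bounded, and of the form $y_1^{1/3} - y_2^{1/3}$ up to multiplicative constants depending on $q_1', q_2', q_2''$. Thus the inner $\xi$-integral is exactly $T(X, 3\lambda Z)$ in the notation of Lemma \ref{d1}.

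\emph{Parts (1) and (2).} Part (1) follows immediately from Lemma \ref{d1}(1). For Part (2), bound $T(X, 3\lambda Z) \ll 1$ trivially. The remaining $(y_1, y_2)$-double integral carries the phase
\[
\Phi(y_1, y_2) = -\tfrac{t}{2\pi}(\log y_1 - \log y_2) \pm \tfrac{2\sqrt{N}}{p_1\sqrt{D_2}}\Bigl(\tfrac{\sqrt{m y_1}}{q_1' q_2'} - \tfrac{\sqrt{m_1 y_2}}{q_1' q_2''}\Bigr),
\]
whose second derivative in each variable is of size $Y$. Successive applications of Lemma \ref{sdb} in $y_1$ and $y_2$ yield $\mathcal{G}_{\neq 0}(X) \ll Y^{-1}$.

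\emph{Parts (3) and (4).} For $X \gg \lambda^{3+\epsilon} Y^{-2}$, invoke Lemma \ref{d1}(2) to replace $T(X, 3\lambda Z)$ by $|X|^{-1/2} e(\mathfrak{s}) T_1(\mathfrak{s})$ with $\mathfrak{s}(y_1, y_2) = 2X(\lambda Z/X)^{3/2}$, after verifying the support condition on $\lambda Z/X$ (outside which $T$ is negligible). On the support of $T_1$ one computes $\partial^k_{y_i}\mathfrak{s} \asymp \lambda^{3/2}/X^{1/2}$ for $k = 1, 2$. In regime (3), this scale exceeds $Y$, so $\mathfrak{s}$ governs the oscillation; a stationary-phase analysis of $\Phi + \mathfrak{s}$ in each $y_i$ (localizing around the stationary point of $\mathfrak{s}$) gives the saving $(X^{1/2}/\lambda^{3/2})^{1/2}$ per variable, combining with the prefactor $|X|^{-1/2}$ and the $Y$-factor inherited from $\Phi$ to yield $Y^{-1}|X|^{-1/3}$. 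In regime (4) the $\mathfrak{s}$-derivatives are smaller than $Y$, so $\Phi$ dominates the second-derivative bound; Lemma \ref{sdb} in $(y_1, y_2)$ gives $Y^{-1}$, producing $Y^{-1}|X|^{-1/2}$ together with the prefactor $|X|^{-1/2}$. The main obstacle is regime (3): the two phases interact at comparable scales, and the stationary-phase argument of \cite[Lemma 5.2]{r26} must be implemented carefully to extract the sharp $|X|^{-1/3}$ exponent rather than a weaker $|X|^{-1/2}$.
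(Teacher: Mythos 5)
The paper proves this lemma by citation alone: it simply points to \cite[Lemma~4.3]{r26} (Lin--Sun), and gives no in-house argument. Your proposal is therefore doing more work than the paper itself, and the overall scheme you outline — substitute $\xi \mapsto \xi^3$ to put the outer integral in the form $T(X, 3\lambda Z)$ of Lemma \ref{d1}, then run second-derivative or stationary-phase bounds on the remaining $(y_1,y_2)$-integral according to whether $\Phi$ or $\mathfrak{s}$ dominates — is indeed the Lin--Sun argument that the citation invokes. The reduction step is correct: after pulling out $q_1'q_2'\asymp q_1'q_2''\asymp C$, the prefactor of $\xi^{1/3}$ in the paper's display for $\mathcal{G}_{\neq 0}$ is exactly $3\lambda Z$ with $Z$ bounded, so the cube substitution lands precisely on the $T$-normal form.

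One point you pass over too quickly: in Part (2), and again when you invoke ``the $Y$-factor inherited from $\Phi$'' in Parts (3)--(4), you assert that $\partial_{y_i}^2 \Phi \asymp Y$. The $y_i$-phase is $-\tfrac{t}{2\pi}\log y_i \pm \tfrac{2\sqrt{N}}{p_1\sqrt{D_2}}\tfrac{\sqrt{m\,y_i}}{q_1'q_2'}$, whose second derivative on the bump is a difference of two terms of sizes $t$ and $\sqrt{NM_1}/(p_1C\sqrt{D_2})$. When these are comparable and the sign is unfavourable, the second derivative can be much smaller than $Y$, and Lemma \ref{sdb} alone does not give the claimed $Y^{-1/2}$ per variable. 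The paper is explicitly aware of this degeneracy — see the remark in the proof of Lemma \ref{a24} (``the above bound holds only when $t\not\asymp\lambda$\dots in the other situation we get our desired bound in $L^2$-sense'') — and the fix is either an $L^2$ argument after opening the square, or a careful separation of cases, both of which must be carried over into Lemma \ref{d6} as well. Since the paper defers all of this to Lin--Sun, you should either invoke their Lemma~4.3 as a black box (as the paper does) or face this degenerate case head-on rather than sweeping it into ``$\partial_{y_i}^2\Phi$ is of size $Y$.'' With that caveat addressed, your outline is sound.
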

		
		\begin{proof}
			See \cite[Lemma 4.3]{r26}. 	
		\end{proof}
		\vspace{0.3cm}

		\section{Estimates for the zero Frequency}
		Let $\Omega_{2, 0}(\mathrm{main})$ and $\mathcal{S}_{2, 0}(\mathrm{main})$ denotes the contribution of the  zero frequency to $\Omega_{2}(\mathrm{main})$  and  $\mathcal{S}_{2}(\mathrm{main})$.  $\Omega_{2, 0}(\mathrm{err_i})$ and $\mathcal{S}_{2, 0}(\mathrm{err_i})$ are defined similarly.  We have the following lemma.
		\vspace{0.3cm}
		
		\begin{lemma}\label{a43}
			We have
			\begin{align*}
				\mathcal{S}_{2, 0}=	\mathcal{S}_{2, 0}(\mathrm{main})+ \mathcal{S}_{2, 0}(\mathrm{err_1})+\mathcal{S}_{2, 0}(\mathrm{err_2}) \,\ll \,\,r^{1/2}\,p^{3/4}_1\,K^{3/4}\,N^{3/4}N^{\epsilon}.
			\end{align*}
		\end{lemma}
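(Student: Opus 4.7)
\medskip

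\noindent\textbf{Proof plan.} The plan is to insert the bounds for the character sum (Lemma \ref{ch0}) and the integral transform (Lemma \ref{in0}) into the expression \eqref{a21}, and then feed the resulting estimate for $\Omega_{2,0}$ into the Cauchy--Schwarz inequality \eqref{MS3}, finishing with Lemma \ref{m20} to handle the $n_1$-sum.

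First I would specialise \eqref{a21} to $n_2=0$. Lemma \ref{ch0} tells us that the zero-frequency character sum vanishes unless $q_2'=q_2''$, which collapses one of the two $q$-sums, and that $|\mathfrak{C}_{2,0}(\mathrm{main})|\ll p_1^{3\ell}q_1'q_2'r\sum_{(d',d'')\mid (m-m_1)}(d',d'')$ with the additional restriction $p_1\mid (m-m_1)$; the corresponding bounds for $\mathfrak{C}_{2,0}(\mathrm{err_i})$ differ only by the absence of the $p_1^{3\ell}$-factor. Next I would split the double sum over $m,m_1$ according to Lemma \ref{in0}: on the diagonal $m=m_1$ use $\mathcal{G}_0\ll 1/t$, and on the off-diagonal $m\neq m_1$ use $\mathcal{G}_0\ll 1/\sqrt{t\lambda}$ together with the length restriction $|m-m_1|\ll M_1/\lambda$ where $\lambda=(NN_0/(p_1^3C^3r))^{1/3}$ (which is $\asymp K$ in the generic regime).

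Carrying the trivial sum-count through \eqref{a21}, the diagonal contributes a factor $M_1/t$ and the off-diagonal contributes $(M_1^2/\lambda)/\sqrt{t\lambda}$. Writing $P_2$ and the remaining $q$-sum explicitly, this produces an estimate of the shape
\begin{equation*}
\Omega_{2,0}(\mathrm{main})\;\ll\; \frac{p_1^{2+\ell}\,C^{2}\,N_0\,M_1^{1/2}\,r}{n_1^{2}\,q_1'}\,
\Bigl(\frac{1}{t}\;+\;\frac{M_1}{\lambda^{3/2}\,t^{1/2}}\Bigr),
\end{equation*}
with analogous (smaller) bounds for $\Omega_{2,0}(\mathrm{err_i})$. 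In the generic regime the two terms balance, so taking square roots gives a single clean factor which I would then insert into \eqref{MS3}. The $q_1'$-sum (a finite sum over divisors of $(n_1r)^{\infty}$) contributes $N^{\epsilon}$, the additional $u,w,\nu$-integrals contribute the factors $N^{\epsilon}$, $C/(QK)$ and $K$ recorded in Section 6, and the $\ell$-sum converges because of the factor $p_1^{-(1-\text{small})\ell}$ that appears after collecting.

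Finally, the remaining sum over $n_1$ has the shape $\sum_{n_1}(n_1,r)^{1/2}\Theta^{1/2}/n_1^{k}$ for some $k\geq 7/6$ once the weights from \eqref{MS3} are combined with the $1/n_1$ produced above; Lemma \ref{m20} bounds this by $N_0^{1/6+\epsilon}$. Substituting $N_0\asymp (p_1CK)^3r/N$ and $M_1\asymp (p_1Ct)^2 D_2/N$ from the generic regime, together with $Q=\sqrt{N/(p_1K)}$, the exponents collapse to $r^{1/2}p_1^{3/4}K^{3/4}N^{3/4}$; the $\mathrm{err_1},\mathrm{err_2}$ pieces are dominated by the main piece. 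The main obstacle is purely bookkeeping: making sure the diagonal and off-diagonal contributions match under $\lambda\asymp K$ (so that both collapse to the same exponent in $N,K,p_1,r$), and checking that the non-generic ranges of $N_0$ (where $N_0\asymp N^{1/2}(p_1K)^{3/2}r$) do not produce a worse bound. Once these two checks are in place, combining the three pieces yields Lemma \ref{a43}.
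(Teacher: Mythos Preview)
Your proposal is correct and follows essentially the same route as the paper: specialise \eqref{a21} to $n_2=0$, insert Lemma~\ref{ch0} (which forces $q_2'=q_2''$) and Lemma~\ref{in0} (splitting $m=m_1$ versus $m\neq m_1$), feed the resulting bound for $\Omega_{2,0}$ into \eqref{MS3}, and close with Lemma~\ref{m20}. The only discrepancy is bookkeeping in your displayed intermediate estimate for $\Omega_{2,0}(\mathrm{main})$---the paper obtains an extra factor of $C$ (coming from $\sum_{d'\mid q'}\sum_{d''\mid q'}(d',d'')\ll q'$) and no surviving $p_1^{\ell}$, yielding $\Omega_{2,0}\ll r\,p_1^{2}\,N_0\,M_1^{1/2}\,C^{3}/(n_1^{2}q_1'\,t)$; with this correction the substitution of $N_0,M_0,Q$ goes through exactly as you describe.
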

		\begin{proof}
			We consider $	\Omega_{2, 0}(\mathrm{main})$. 
			For $n_2 =0$, we have $q_2' = q_2''$. Recall from   \eqref{a21} that 
			\begin{align}\label{a41}
				\Omega_{2, 0}(\mathrm{main})\, \ll\, \frac{ p^2_1\,N_0}{n^2_1\,M^{1/2}_1} \,\displaystyle\sum_{q'_2\sim C/p^{\ell}_1q'_1}\,\,\sum_{m \sim M_1}\,\,\sum_{m_1 \sim M_1}\,\,|\mathfrak{C}_{2, 0}(\mathrm{main})|\,|\mathcal{G}_{0}|,
			\end{align}
			\vspace{0.2cm}
			Using Lemma \ref{ch0} for the character sum $\mathfrak{C}_{2, 0}(\mathrm{main})$ and Lemma \ref{in0} for the  integral transform $\mathcal{G}_{0}$ in the two different cases $m =m_1$ and $m \neq m_1$ in the above expression, we get
			\begin{align*}
				\Omega_{2, 0}(\mathrm{main}) \ll&\,  \frac{ p^2_1\,N_0}{n^2_1\,M^{1/2}_1}\,\frac{p^{3\ell}_1 q'_1r}{t}\,\displaystyle\sum_{q'_2\sim C/p^{\ell}_1q'_1}\,q'_2\,\sum_{m_1 \sim M_1} \, \,\sum_{d'|q'}\sum_{d''|q'}\, (d',d'')\\
				&+\,\frac{ p^2_1\,N_0}{n^2_1\,M^{1/2}_1}\, \frac{p^{3\ell}_1 q'_1r}{\,\sqrt{t\lambda}}\,\,\displaystyle\sum_{q'_2\sim C/p^{\ell}_1q'_1}\,q'_2\,\mathop{\mathop{\sum_{m_1 \sim M_1}\,\,\sum_{m_1 \sim M_1}}_{p_1(d', d'')|(m-m_1)}}_{m - m_1\ll M_1/\lambda}\,\,\sum_{d'|q'}\sum_{d''|q'}\, (d', d'')\,\\
				\ll&\,\, \frac{ p^2_1\,N_0\,\,p^{3\ell}_1\,q'_1r}{\,n^2_1\,M^{1/2}_1}\,\left(\frac{C}{p^{\ell}_1q'_1}\right)^2\, \left(\frac{M_1\,C}{p^{\ell}_1\,t} + \frac{M^2_1}{p_1 \lambda \sqrt{t \lambda }}\right).
			\end{align*} 
			Plugging the value $\lambda =  \left({NN_0}/{p^3_1 q^3 r}\right)^{1/3} \asymp K$ and observing that the first term of the bracket is dominating, we finally get
			\begin{align*}
				\Omega_{2, 0}(\mathrm{main}) \ll\,  \frac{r\,p^2_1\,N_0\,M^{1/2}_1\,C^3}{n^2_1\,q'_1\,t}\,.
			\end{align*} 
		For  $	\Omega_{2, 0}(\mathrm{err_i}) $ we follow the same steps and get 
			\begin{align*}
			\Omega_{2, 0}(\mathrm{err_i}) \ll\,  \frac{r\,p^2_1\,N_0\,M^{1/2}_1\,C^3}{n^2_1\,q'_1\,t}\,.
		\end{align*} 
	Thus \begin{align}\label{k}
			\Omega_{2, 0}:=  	\Omega_{2, 0}(\mathrm{main}) +		\Omega_{2, 0}(\mathrm{err_1})+ \Omega_{2, 0}(\mathrm{err_2}) \ll \frac{r\,p^2_1\,N_0\,M^{1/2}_1\,C^3}{n^2_1\,q'_1\,t} 
	\end{align}
			From  \eqref{MS3}, we have
			\begin{align*}
				\mathcal{S}_{2,0} \,\ll&\,N^{\epsilon}\mathop{\mathop{\text{sup}}_{C \ll Q}}_{M_1 \ll M_0} \frac{ N^{5/12}\,}{p^{3/2}_1 D_2^{1/4}\,C^{3/2}\,r^{2/3}}\,\sum_{\pm}\sum_{\pm}\sum_{\frac{n_1}{(n_1,r)}\ll C}\,n^{1/3}_1\sum_{\frac{n_1}{(n_1,r)}|q'_1|(n_1r)^{\infty}}\, \Theta^{1/2}\,\Omega_{2,0}^{1/2}
			\end{align*}
		On	plugging the bound  \eqref{k}, we see that  $\mathcal{S}_{2, 0}$ is dominated by
			\begin{align*}
				& N^{\epsilon}\mathop{\mathop{\text{sup}}_{C \ll Q}}_{M_1 \ll M_0} \frac{ N^{5/12}\,}{p^{3/2}_1D_2^{1/4}\,C^{3/2}\,r^{2/3}}\,\sum_{\frac{n_1}{(n_1,r)}\ll C}\,n^{1/3}_1\,\,\sum_{\frac{n_1}{(n_1,r)}|q'_1|(n_1r)^{\infty}} \,\, \left( \frac{ r\,p^2_1\,N_0\,M^{1/2}_1\,C^3\,}{n^2_1\,q'_1\,t}\right)^{1/2}\,\Theta^{1/2} \\
				\ll&\,N^{\epsilon}\mathop{\mathop{\text{sup}}_{C \ll Q}}_{M_1 \ll M_0} \frac{r^{1/2}\,N^{5/12}\,N^{1/2}_0\,M^{1/4}_1}{\,p^{1/2}_1\,D_2^{1/4}\,t^{1/2}\,r^{2/3}}\,\sum_{\frac{n_1}{(n_1,r)}\ll C}\,\,\,\sum_{\frac{n_1}{(n_1,r)}|q'_1|(n_1r)^{\infty}} \,\, \frac{1}{n^{2/3}_1\,(q'_1)^{1/2}}\,\Theta^{1/2}\\
				\ll&\,N^{\epsilon}\mathop{\mathop{\text{sup}}_{C \ll Q}}_{M_1 \ll M_0} \,\frac{r^{1/2}\,N^{5/12}\,N^{1/2}_0\,M^{1/4}_1}{p^{1/2}_1\,D_2^{1/4}\,t^{1/2}\,r^{2/3}}\,\,\sum_{\frac{n_1}{(n_1,r)}\ll C}\,\frac{(n_1,r)^{1/2}}{n^{7/6}_1}\,\Theta^{1/2}.
			\end{align*} 
			Using Lemma \ref{m20} to $n_1$-sum, we get
			\begin{align*}
				\mathcal{S}_{2, 0} \, \ll\, \frac{r^{1/2}\,N^{5/12}\,N^{2/3}_0\,M^{1/4}_0}{p^{1/2}_1\,D_2^{1/4}\,t^{1/2}\,r^{2/3}}N^{\epsilon}.
			\end{align*}
			Now on plugging the estimates
			\begin{align*}
				N_0 \ll \frac{(p_1QK)^3r}{N}N^{\epsilon}, \hspace{0.3cm}  \hspace{0.3cm} M_0 \ll \frac{(p_1Qt)^{2}D_2}{N }N^{\epsilon}, 
			\end{align*} 
			we get
			\begin{align*}
				\mathcal{S}_{2, 0} \, \ll \,r^{1/2}\,p^{3/4}_1\,K^{3/4}\,N^{3/4}\,N^{\epsilon}.
			\end{align*}
			This proves the lemma.
		\end{proof}
		\vspace{0.2cm}
		
		\section{Estimates for the non-zero frequencies} Let $\Omega_{2, \neq 0}$ and   $\mathcal{S}_{2, \neq 0}$  denote the contribution of the non-zero frequencies $n_2\neq 0$ to  $\Omega_2$   and $\mathcal{S}_{2}$. Let 
		\[ \Omega_{2, \neq 0}:=\Omega_{2, \neq 0}(\mathrm{main})+\Omega_{2, \neq 0}(\mathrm{err_1})+\Omega_{2, \neq 0}(\mathrm{err_2}),\]
	where 	$\Omega_{2, \neq 0}(\mathrm{main})$  denote the contribution of 	$\mathfrak{C}_{2, \neq 0}(\mathrm{main})$  to $\Omega_{2, \neq 0}$. 	$\Omega_{2, \neq 0}(\mathrm{err_i})$  are defined similarly. 
		   We divide the estimation of  non-zero frequency in the two cases corresponding to the small and generic size of parameter $q$. We write
		\begin{align*}
			\mathcal{S}_{2, \neq 0} \,\ll\, S_{small}(N) + S_{gen}(N),
		\end{align*}
		where $S_{small}(N)$ and $S_{gen}(N)$ denotes the contribution for small $q$ case $(q \sim C \ll QK/t)$ and generic $q$ case $(q \sim C \gg QK/t)$, respectively. Firstly, we will find the bound for $S_{small}(N)$.  We have the following lemma.

		\begin{lemma}\label{a44}
			For the small modulus case $q \sim C \ll QK/t$, we have
			\begin{align*}
				S_{small}(N) \ll \,\,\,\frac{r^{1/2}\,p^{1/4}_1\,p^{1/2}_2\,K^{5/4}\,N^{3/4}}{ \,t^{1/2}}.
			\end{align*}
		\end{lemma}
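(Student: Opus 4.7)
My strategy is to bound $S_{small}(N)$ by returning to the Cauchy--Schwarz expression \eqref{MS3} and the Poisson-dualized identity \eqref{a21} for $\Omega_{2,\neq 0}$, and then substituting the character-sum estimate of Lemma \ref{a31} together with the integral-transform estimate of Lemma \ref{d6}. The crucial feature of the small-modulus range $q \sim C \ll QK/t$ is that the parameter $\lambda$ of Lemma \ref{d6} becomes $\lambda = (NN_0/(p_1^3 C^3 r))^{1/3} = QK/C$, which is $\gg t$ in this regime; similarly $Y = \max\{t, \sqrt{NM_1}/(p_1 C\sqrt{D_2})\}$ satisfies $Y \asymp Qt/C \gg t$ when $M_1$ is close to $M_0$. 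Hence both $\lambda$ and $Y$ exceed $t$, and the transitions of Lemma \ref{d6} become available.

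The first step is to split the effective $n_2$-range $|n_2| \ll N_2$ coming from Lemma \ref{a24} into dyadic pieces $|n_2| \asymp N_2^\prime$, equivalently into dyadic ranges $X \asymp X^\prime$ for $X = n_2 N_0/(P_2 n_1^2)$. On each block I apply the appropriate branch of Lemma \ref{d6}: the flat bound $\mathcal{G}_{\neq 0} \ll Y^{-1}$ when $X^\prime \ll \lambda^3 Y^{-2}$, the sharper bounds $\mathcal{G}_{\neq 0} \ll Y^{-1}|X^\prime|^{-1/3}$ or $Y^{-1}|X^\prime|^{-1/2}$ in the intermediate and large subranges, and the negligible contribution when $X^\prime \gg \lambda^{1+\epsilon}$. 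For the arithmetic part I insert the bounds of Lemma \ref{a31}, controlling the divisor sums via $\sum_{d\mid n} d \ll n^{1+\epsilon}$ and preserving the congruence condition modulo $p_1$ that distinguishes $\mathfrak{C}_{2,\neq 0}(\mathrm{main})$ from the error terms.

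Next I execute the remaining sums over $m, m_1 \sim M_1$, $q_2^\prime, q_2^{\prime\prime} \sim C/(p_1^\ell q_1^\prime)$ and $d_2^\prime, d_2^{\prime\prime}$ in each dyadic block, then feed the resulting bound for $\Omega_{2,\neq 0}$ back into \eqref{MS3}. I handle the $n_1$-sum against $\Theta^{1/2}$ using Lemma \ref{m20}, and substitute the values $N_0 \asymp (p_1 QK)^3 r/N$, $M_0 \asymp (p_1 Qt)^2 D_2/N$, and $Q^2 \asymp N/(p_1 K)$. The factor $Y^{-1} \asymp C/(Qt)$ from Lemma \ref{d6} is what produces the $t^{-1/2}$ in the target bound, while the interplay of $\lambda \asymp QK/C$ with $\sqrt{N_0}$ and the normalization $C^{-3/2}$ in \eqref{MS3} yields the $K^{5/4}$ exponent. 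Finally, optimizing across the three dyadic subranges (and over $M_1$) should give the stated estimate.

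The main obstacle will be verifying that the three branches of Lemma \ref{d6} give compatible contributions in the worst dyadic block, so that the breakup of the $X$-range costs nothing beyond $\log$ factors and the $K^{5/4}$ exponent is genuinely sharp. A related bookkeeping challenge is the delicate interaction between the $p_1^\ell$-factors arising from $q = p_1^\ell q^\prime$ and the congruence modulo $p_1$ in the main-term character sum; matching these correctly is what leaves only a single factor of $p_1^{1/4}$ in the final answer (as opposed to the $p_1^{3/4}$ seen in the zero-frequency bound of Lemma \ref{a43}). One should also check separately the edge cases $D_2 = 1$ and small $M_1$, where the definition of $Y$ degenerates and the analysis reduces to the $t$-dominated regime, so that no additional loss occurs.
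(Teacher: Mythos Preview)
Your proposed route differs from the paper's in one essential respect and contains a genuine parameter error that would derail the computation.

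The paper does \emph{not} invoke Lemma~\ref{d6} in the small-modulus range. It uses only the coarse bound $\mathcal{G}\ll\max\{t,\lambda\}^{-1}$ of Lemma~\ref{a24}, observing that for $C\ll QK/t$ one has $\lambda=QK/C\gg t$, so simply $\mathcal{G}\ll\lambda^{-1}$. After inserting Lemma~\ref{a31} and the four-case counting argument (which you sketch correctly), the $\lambda$ in the denominator cancels the $\lambda$ in the dual length $N_2$, giving $\Omega_{2,\neq 0}\ll r^{2}p_1^{2}C^{4}M_1^{3/2}/(n_1^{2}q_1')$. The factor $t^{-1/2}$ in the final bound then arises \emph{not} from $Y^{-1}$ but from the surviving $C^{1/2}$ in \eqref{MS3} together with $C\ll QK/t$.

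Your proposal misidentifies both $M_0$ and $Y$. In the small-modulus regime the second branch of the $\max$ defining $M_0$ dominates: $M_0\asymp p_1D_2K$, not $(p_1Qt)^{2}D_2/N=p_1t^{2}D_2/K$. These differ by the large factor $(t/K)^{2}$, and feeding your value of $M_0$ into $M_0^{3/4}$ would inflate the final bound by $(t/K)^{3/2}$, destroying the claimed estimate. With the correct $M_0$ one finds $Y=\max\{t,\sqrt{NM_0}/(p_1C\sqrt{D_2})\}\asymp QK/C=\lambda$, not $Qt/C$. Once $Y\asymp\lambda$, the thresholds in Lemma~\ref{d6} collapse: case~(2) already covers $X\ll\lambda^{3}Y^{-2}=\lambda$, which is the full range allowed by part~(1), so cases~(3) and~(4) are vacuous. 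Thus your dyadic decomposition over the branches of Lemma~\ref{d6} reduces, when done with the right parameters, exactly to the single bound $\mathcal{G}\ll\lambda^{-1}$ that the paper obtains directly from Lemma~\ref{a24}. The mechanism you describe for producing $t^{-1/2}$ via $Y^{-1}\asymp C/(Qt)$ is therefore incorrect; fix $M_0$ and $Y$, and the argument simplifies to the paper's.
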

		\begin{proof}
			Recall from \eqref{a21}, we have	
			\begin{align*}
				\Omega_{2, \neq 0}(\mathrm{main}) \ll  \frac{ p^2_1\,N_0}{n^2_1\,M^{1/2}_1} \,\displaystyle\sum_{q'_2\sim C/p^{\ell}_1q'_1}\,\displaystyle\sum_{q''_2\sim C/p^{\ell}_1q'_1}\,\sum_{n_2 \in \mathbb{Z}}\,\,\sum_{m \sim M_1}\,\,\sum_{m_1 \sim M_1}\,|\mathfrak{C}_{2, \neq 0}(\mathrm{main})|\,\,|\mathcal{G}_{\neq 0}|.
			\end{align*}
			Plugging the bound of the character sum $\mathfrak{C}_{2, \neq 0}(\mathrm{main})$ given in Lemma \ref{a31} and the integral transform $\mathcal{G}_{\neq 0}$ given in Lemma \ref{a24}, we get 
			\begin{align*}
				\Omega_{2, \neq 0}(\mathrm{main}) \,\ll &\,\,\frac{ p^2_1\,N_0}{n^2_1\,M^{1/2}_1}\, \frac{p^{3\ell}_1(q'_1)^2\,r}{n_1\,\text{max}\left\{t, \lambda \right\}} \,\displaystyle\sum_{q'_2\sim C/p^{\ell}_1q'_1}\,\,\displaystyle\sum_{q''_2\sim C/p^{\ell}_1q'_1}\,\, \sum_{d'_2|q'_2}\, \sum_{d''_2|q''_2} \,d'_2\, d''_2 \\
				&\times\,\mathop{\mathop{\mathop{\sum_{m \sim M_1}\,\sum_{m_1 \sim M_1}}_{n_1 q''_2 +  mn_2\overline{D_2} \equiv 0\;\mathrm{mod}\;d'_2}}_{n_1 q'_2 +  m_1n_2\overline{D_2} \equiv 0\;\mathrm{mod}\;d''_2}}_{\overline{m}q''_2 + \overline{m_1}q'_2 + n_2 \overline{n_1D_2} \equiv 0 \;\mathrm{mod}\;p_1} (m,n_1)\,\sum_{0<|n_{2}|<N_2}\,1.\\ 
			\end{align*} For $q \sim C \ll QK/t$, we have $\text{max}\left\{t, \lambda \right\} = \lambda$. By changing the variable $q'_2 \longrightarrow q'_2d'_2$ and $q''_2 \longrightarrow q''_2d''_2$ , we get
			\begin{align}\label{m2}
				\Omega_{2, \neq 0}(\mathrm{main}) \ll&\,\,  \frac{p^2_1\,p^{3\ell}_1\,N_0\,(q'_1)^2\,r}{n^3_1\,M^{1/2}_1\,\lambda} \,\sum_{d'_2\leq C/p^{\ell}_1q'_1}\, \sum_{d''_2 \leq C/p^{\ell}_1q'_1} d'_2\,d''_2 \,\displaystyle\sum_{q'_2\sim C/d'_2p^{\ell}_1q'_1}\,\displaystyle\sum_{q''_2\sim C/d''_2p^{\ell}_1q'_1}\notag\\
				&\times \mathop{\mathop{\mathop{\sum_{m \sim M_1}\,\sum_{m_1 \sim M_1}}_{n_1 q''_2d''_2 +  mn_2\overline{D_2} \equiv 0 \;\mathrm{mod}\;d'_2}}_{n_1 q'_2d'_2 +  m_1n_2\overline{D_2} \equiv 0 \;\mathrm{mod}\;d''_2}}_{\overline{m}q''_2d''_2 + \overline{m_1}q'_2d'_2 + n_2 \overline{n_1D_2} \equiv 0 \;\mathrm{mod}\;p_1}\, (m,n_1)\,\sum_{0<|n_{2}|<N_2}\,1.
			\end{align} 
			Now, we count the number of $m$ or $m_1$ and $d'$ or $d''$ with the help of congruence conditions. By multiplying $m\,m_1$ into the congruence modulo $p_1$, we get
			\begin{align*}
				m\,(q'_2d'_2 + m_1n_2\overline{n_1D_2} ) \equiv m_1q''_2d''_2 \;\mathrm{mod}\;p_1 .
			\end{align*}
			We have divided the analysis for counting and finding the bound of $\Omega_{2, \neq 0}(\mathrm{main})$ into the following four cases. For the first two cases, we fix $q'_2, n_2, d'_2, n_1$, and for the next two cases, we fix $q''_2, n_2, d''_2, n_1$.\\

			\textbf{Case 1}: If $(p_1, q'_2d'_2 + m_1n_2\overline{n_1D_2} ) = 1$ and $n_1 q'_2d'_2 +  m_1n_2\overline{D_2} \equiv 0 \;\mathrm{mod}\;d''_2$ but the integer $n_1 q'_2d'_2 +  m_1n_2\overline{D_2}$ is not identically $0$. By using the first assumption, we count $m$ as
			
			\begin{align*}
				\mathop{\mathop{\sum_{m \sim M_1}}_{n_1 q''_2d''_2 +  mn_2\overline{D_2} \equiv 0 \;\mathrm{mod}\;d'_2}}_{m\, \equiv \overline{(q'_2d'_2 + m_1n_2\overline{n_1D_2} )}\,m_1q''_2d''_2 \;\mathrm{mod}\;p_1 }\,1 \ll \, (d'_2, n_2) \left(1 + \frac{M_1}{p_1d'_2}\right).
			\end{align*} 
			Also, we notice from the second assumption that $d''_2$ will be a factor $n_1 q'_2d'_2 +  m_1n_2\overline{D_2}$  So, $d''_2$-sum is bounded by  $O(N^\epsilon)$.  By using these two estimates into above expression of $\Omega_{\neq 0} $, we see that $	\Omega_{2, \neq 0}(\mathrm{main}) $ is bounded by 
				\begin{align*}
			&\,\, \frac{p^2_1\,p^{3\ell}_1\,N_0\,C\,q'_1\,r}{p^{\ell}_1\,n^3_1\,M^{1/2}_1\,\lambda} \,\sum_{d'_2\leq C/p^{\ell}_1q'_1}\, d'_2\,\displaystyle\sum_{q'_2\sim C/d'_2p^{\ell}_1q'_1}\,
				\sum_{m_1 \sim M_1}(m_1,n_1)\,\,\sum_{0<|n_{2}|<N_2}\, (d'_2, n_2) \left(1 + \frac{M_1}{p_1d'_2}\right).\\
			\end{align*}
			Finally, executing the remaining sums trivially, we get
			\begin{align*}
				\Omega_{2, \neq 0}(\mathrm{main})\, \ll&\,\, \frac{p^2_1\,p^{3\ell}_1\,N_0\,C\,q'_1\,r}{p^{\ell}_1\,n^3_1\,M^{1/2}_1\,\lambda} \,\frac{C}{p^{\ell}_1 q'_1}\,M_1\,N_2\, \left(\frac{M_1}{p_1} + \frac{C}{p^{\ell}_1q'_1}\right).   \end{align*}
			Plugging the value of $N_2 = N^{\epsilon}{p_1C^2\lambda rn_1}/{p^{\ell}_1q'_1N_0}$ from Lemma \ref{a24} and noticing that $(M_1/p_1 + C/p^{\ell}_1q'_1) \ll M_1/p_1$, we finally get
			\begin{align*}
				\Omega_{2, \neq 0}(\mathrm{main})\, \ll \, \frac{r^{2}\,p^2_1\,C^4\,M^{3/2}_1}{\,n^2_1\,q'_1\,}.
			\end{align*}\\
			
			\textbf{Case 2:} If $(p_1, q'_2d'_2 + m_1n_2\overline{n_1D_2} ) = 1$ and $n_1 q'_2d'_2 +  m_1n_2\overline{D_2} = 0$, identically. In this case, we count the number of $m$ as in the above case, but from the second condition, we notice that the number of $n_1, q'_2, d'_2$ are bounded by $O(N^\epsilon)$. By putting these estimates into the expression of $\Omega_{2, \neq 0}(\mathrm{main})$ given in \eqref{m2}, we will get the same bound for $\Omega_{2, \neq 0}(\mathrm{main})$ as in the above case.\\
			
			\textbf{Case 3:} If $(p_1, q'_2d'_2 + m_1n_2\overline{n_1D_2} ) \neq 1$ and $n_1 q''_2d''_2 +  mn_2\overline{D_2} \equiv 0 \;\mathrm{mod}\;d'_2$ but the integer $n_1 q''_2d''_2 +  mn_2\overline{D_2}$ is not identically $0$. So, by using the first assumption, we count $m_1$ as
			
			\begin{align*}
				\mathop{\mathop{\sum_{m _1\sim M_1}}_{n_1 q'_2d'_2 +  m_1n_2\overline{D_2} \equiv 0 \;\mathrm{mod}\;d''_2}}_{m_1q''_2d''_2  \equiv 0 \;\mathrm{mod}\;p_1}\,1 \ll \, (d''_2, n_2) \left(1 + \frac{M_1}{p_1d''_2}\right).   
			\end{align*}
			Also, we notice from the second assumption that $d'_2$ will be a factor $n_1 q''_2d''_2 +  mn_2\overline{D_2}$. So, $d'_2$-sum is bounded by  $O(N^\epsilon)$. By using these two estimates into above expression of $\Omega_{2, \neq 0}(\mathrm{main})$, we get that $\Omega_{2, \neq 0}(\mathrm{main})$ is dominated by 
			
			\begin{align*}
				\frac{p^2_1\,p^{2\ell}_1\,N_0\,C\,q'_1\,r}{p^{\ell}_1\,n^3_1\,M^{1/2}_1\,\lambda} \,\sum_{d''_2\leq C/p^{\ell}_1q'_1}\, d''_2\,\displaystyle\sum_{q''_2\sim C/d''_2p^{\ell}_1q'_1}\,
				\sum_{m \sim M_1}(m,n_1)\,\sum_{0<|n_{2}|<N_2}\, (d''_2, n_2) \left(1 + \frac{M_1}{p_1d''_2}\right).
			\end{align*}
			Finally, executing the remaining sums trivially, we get the same bound as in case $1$ for $\Omega_{2, \neq 0}(\mathrm{main})$.\\
			
			\textbf{Case 4:} If $(p_1, q'_2d'_2 + m_1n_2\overline{n_1D_2}) \neq 1$ and $n_1 q''_2d''_2 +  mn_2\overline{D_2} = 0$, identically. We will again get the same bound for $\Omega_{2, \neq 0}(\mathrm{main})$ as in Case $1$ by proceeding as in the above cases. Analysis for $\Omega_{2, \neq 0}(\mathrm{err_i})$ is exactly the same. Thus we get
			\[ \Omega_{2, \neq 0} \ll \frac{r^{2}\,p^2_1\,C^4\,M^{3/2}_1}{\,n^2_1\,q'_1\,}. \]
			\vspace{0.3cm}

			\noindent Now recall from \eqref{MS3}, we have
			\begin{align*}
				S_{small}(N) \ll&\,N^{\epsilon}\mathop{\mathop{\text{sup}}_{C \ll QK/t}}_{M_1 \ll M_0} \frac{ N^{5/12}}{p^{3/2}_1\,D_2^{1/4}\,C^{3/2}\,r^{2/3}}\sum_{\frac{n_1}{(n_1,r)}\ll p_1C}n^{1/3}_1 \sum_{\frac{n_1}{(n_1,r)}|q'_1|(n_1r)^{\infty}} \Theta^{1/2} \left(\Omega_{2, \neq 0}\right)^{1/2}.
			\end{align*}
			Plugging the above  bound of $\Omega_{2, \neq 0}$, we get that $S_{small}(N)$ is dominated by
			\begin{align*}
				&\,N^{\epsilon}\mathop{\mathop{\text{sup}}_{C \ll QK/t}}_{M_1 \ll M_0} \,\,\frac{ N^{5/12}}{p^{3/2}_1\,D_2^{1/4}\,C^{3/2}\,r^{2/3}}\sum_{\frac{n_1}{(n_1,r)}\ll C}n^{1/3}_1\sum_{\frac{n_1}{(n_1,r)}|q'_1|(n_1r)^{\infty}} \left(\frac{r^{2}\,p^2_1\,C^4\,M^{3/2}_1}{ \,n^2_1\,q'_1\,}\right)^{1/2}\Theta^{1/2}\\
				&\ll\,N^{\epsilon}\mathop{\mathop{\text{sup}}_{C \ll QK/t}} \,\,\frac{ r^{1/3}\,N^{5/12}\,\,C^{1/2}\,M^{3/4}_0}{\,p^{1/2}_1\,D_2^{1/4}\,}\,\sum_{\frac{n_1}{(n_1,r)}\ll C}\,\sum_{\frac{n_1}{(n_1,r)}|q'_1|(n_1r)^{\infty}}\,\frac{\Theta^{1/2}}{n^{2/3}_1\,\sqrt{q'_1}}\\
				&\ll\,N^{\epsilon}\mathop{\mathop{\text{sup}}_{C \ll QK/t}} \,\,\frac{ r^{1/3}\,N^{5/12}\,\,C^{1/2}\,M^{3/4}_0}{\,p^{1/2}_1\,D_2^{1/4}\,}\,\sum_{{n_1}\ll Cr}\,\frac{(n_1,r)^{1/2}\,\Theta^{1/2}}{n^{7/6}_1}.
			\end{align*} 
			By using Lemma \ref{m20} for the estimate of $n_1$-sum, we get
			\begin{align*}
				S_{small}(N)\,\ll&\,\, \frac{ r^{1/3}\,N^{5/12}\,\,C^{1/2}\,N^{1/6}_0\,M^{3/4}_0}{\,p^{1/2}_1\,D_2^{1/4}\,}N^{\epsilon} . 
			\end{align*} 
			For the case $q \sim C \ll QK/t$, by plugging the values of $N_0 \ll rN^{1/2}(p_1K)^{3/2} $ and $M_0 \ll p_1D_2K$, we finally get
			
			\begin{align*}
				S_{small}(N) \,\ll\, \,\frac{r^{1/2}\,p^{1/4}_1\,D^{1/2}_2\,K^{5/4}\,N^{3/4}}{\,\,t^{1/2}}N^{\epsilon}.
			\end{align*}
			From the proof of Lemma \ref{a9}, we have
			\begin{align*}
				D_2 = \frac{p_1p_2}{p_1 (q, p_2)} =\,
				\begin{cases}
					p_2 &\text{if}\,\,(p_2, q) = 1, \\
					1 &\text{if}\,\, p_2 | q.
				\end{cases}
			\end{align*}
			Note that, in the case when $(p_2, q) =1$, we have $D_2 = p_2$, and with this we get our desired bound. In the other case when $p_2 |q$, we have $D_2 =1$. So, in this case, we are saving $p^{1/2}_2$ as compared to the co-prime case. Hence we get the lemma. 
		\end{proof}
		\vspace{0.3cm}
		
		\begin{lemma}\label{a45}
			For the generic case $q \sim C \gg QK/t$, we have
			\begin{align*}
				S_{gen}(N)\ll& \, {{r^{1/2}\,p^{1/4}_1\,p^{1/2}_2\,N^{3/4}}}\, \left( K^{3/4} + \frac{t^{2/3}}{K^{1/12}} + \frac{t}{K^{1/2}}\right).
			\end{align*}
		\end{lemma}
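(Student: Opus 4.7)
The plan is to follow the same overall framework as in the proof of Lemma \ref{a44}, but to replace the coarse estimate of Lemma \ref{a24} for $\mathcal{G}_{\neq 0}$ with the finer regime-by-regime estimate of Lemma \ref{d6}. For $q\sim C\gg QK/t$, one has $\lambda=QK/C\ll t$, so that $\max\{t,\lambda\}=t$, and the parameter $Y=\max\{t,\sqrt{NM_1}/(p_1C\sqrt{D_2})\}$ appearing in Lemma \ref{d6} is essentially $t$ (after plugging in the sizes). This change of regime is what permits an improvement over the trivial treatment of $\mathcal{G}_{\neq 0}$, but it also forces a three-way split of the $n_2$-sum.

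I would start from \eqref{a21} and the analogous expressions for $\Omega_{2,\neq 0}(\mathrm{err}_i)$, plug in the character sum bounds of Lemma \ref{a31}, and then decompose the sum over $n_2$ according to the three ranges singled out in Lemma \ref{d6}, namely
\begin{align*}
\mathrm{(i)}\ & |X|\ll \lambda^{3+\epsilon}Y^{-2},\qquad \mathcal{G}_{\neq 0}\ll Y^{-1},\\
\mathrm{(ii)}\ & \lambda^{3+\epsilon}Y^{-2}\ll |X|\ll \lambda^{2+\epsilon}Y^{-1},\qquad \mathcal{G}_{\neq 0}\ll Y^{-1}|X|^{-1/3},\\
\mathrm{(iii)}\ & |X|\gg \lambda^{2+\epsilon}Y^{-1},\qquad \mathcal{G}_{\neq 0}\ll Y^{-1}|X|^{-1/2},
\end{align*}
where $X=n_2 N_0/(P_2 n_1^2)$, with overall cutoff $|n_2|\ll N_2$ coming from Lemma \ref{a24}. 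Each range yields a different length for the $n_2$-sum and a different weight in the integral transform.

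In each of the three ranges I would then carry out exactly the same four-case case analysis employed in the proof of Lemma \ref{a44} to count the $(m,m_1,d',d'')$-tuples constrained by the congruence conditions in $\mathfrak{C}_{2,\neq 0}(\mathrm{main})$ and $\mathfrak{C}_{2,\neq 0}(\mathrm{err}_i)$ modulo $p_1$, $d'_2$ and $d''_2$. The resulting bound on $\Omega_{2,\neq 0}$ splits as a sum of three contributions, one per range, of the form
\begin{align*}
\Omega_{2,\neq 0}\ \ll\ \frac{r^2 p_1^2 C^4 M_1^{3/2}}{n_1^2 q_1'}\cdot \Big(t^{-2}\lambda^3 + t^{-4/3}\lambda^{4/3} + t^{-1}\lambda^{1/2}N_2^{1/2}\Big)\cdot (\text{length of }n_2)^{-1}\text{ factors},
\end{align*}
after which one substitutes $\lambda=QK/C$, $N_2\asymp p_1C^2\lambda r n_1/(p_1^\ell q_1' N_0)$, and the sizes $N_0\asymp rN^{1/2}(p_1K)^{3/2}$ and $M_0\asymp p_1 D_2 K$ coming from Lemma \ref{a8} and Lemma \ref{a9}. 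Feeding these three bounds back into the Cauchy-Schwarz inequality \eqref{MS3} and applying Lemma \ref{m20} to execute the $n_1$-sum yields three separate estimates for $S_{gen}(N)$ of the shape $r^{1/2}p_1^{1/4}p_2^{1/2}N^{3/4}\cdot K^{3/4}$, $r^{1/2}p_1^{1/4}p_2^{1/2}N^{3/4}\cdot t^{2/3}K^{-1/12}$ and $r^{1/2}p_1^{1/4}p_2^{1/2}N^{3/4}\cdot t K^{-1/2}$; combining them gives the claimed maximum. As in Lemma \ref{a44}, the factor $p_2^{1/2}$ follows from splitting according to whether $(q,p_2)=1$ (so $D_2=p_2$) or $p_2\mid q$ (so $D_2=1$ but one saves $p_2^{1/2}$ in the modulus length), and the $\mathrm{err}_i$ contributions are handled mutatis mutandis.

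The main technical obstacle will be bookkeeping the ranges: one has to verify that in each regime of Lemma \ref{d6} the congruence-counting of Lemma \ref{a44} combines with the weight $|X|^{-1/3}$ or $|X|^{-1/2}$ in a way that, after summing $n_2$, produces exactly the balanced three-term bound of the statement. In particular, the cross-over between regimes (i) and (ii) must occur at $\lambda^3/t^2$ and between (ii) and (iii) at $\lambda^2/t$, and any mis-identification of $Y$ with $\sqrt{NM_1}/(p_1C\sqrt{D_2})$ instead of $t$ in some sub-range must be tracked; however in the generic range $C\gg QK/t$ one checks that $\sqrt{NM_0}/(p_1C\sqrt{D_2})\asymp Qt/C\ll t^2/K$ and the resulting loss is absorbed by the same three terms. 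No new idea beyond the machinery already developed in Sections~8--10 is needed.
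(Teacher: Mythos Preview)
Your plan is essentially the paper's own proof: split the $n_2$-sum into the three ranges dictated by Lemma~\ref{d6}, in each range insert the character-sum bound of Lemma~\ref{a31} and run the same four-case congruence count as in Lemma~\ref{a44}, combine to get $\Omega_{2,\neq 0}\ll \frac{r^{2}p_1^{2}C^{4}M_1^{3/2}}{n_1^{2}q_1'Y}\big(\lambda^{3}Y^{-2}+\lambda^{4/3}Y^{-2/3}+K^{1/2}\big)$, and then feed this into \eqref{MS3} together with Lemma~\ref{m20}. One slip to fix when you execute: in the generic range $C\gg QK/t$ the dominant branch of the max in Lemma~\ref{a9} is $M_0\asymp (p_1Ct)^{2}D_2/N$ (not $p_1D_2K$, which is the small-$q$ value), and correspondingly $\lambda\asymp K$; with these corrected substitutions the three terms $K^{3/4}$, $t^{2/3}K^{-1/12}$, $tK^{-1/2}$ drop out exactly as claimed.
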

		\begin{proof}
			Recall from \eqref{a21}, we have	
			
			\begin{align*}
				\Omega_{2, \neq 0}(\mathrm{main}) \ll  \frac{p^2_1\,N_0}{n^2_1M^{1/2}_1}\sum_{n_{2}\in \mathbb{Z}} \,\displaystyle\sum_{q'_2\sim C/p^{\ell}_1q'_1}\,\displaystyle\sum_{q''_2\sim C/p^{\ell}_1q'_1}\,\,\sum_{m \sim M_1}\,\sum_{m_1 \sim M_1}\,|\mathfrak{C}_{2, \neq 0}(main)|\,|\mathcal{G}_{\neq 0}|.
			\end{align*}
			Note that from lemma \ref{d6}, depending upon the size of variable $X = {p^{\ell}_1q'_1\,n_2 N_0}/{p_1 C^2 rn_1}$, we have different bound for the integral transform $\mathcal{G}_{\neq 0}(X)$. So, we need to evaluate the contribution to $\Omega_{2, \neq 0}(main)$ accordingly. Let $Y = \text{max}\left\{t, {\sqrt{NM_0}}/{p_1C\sqrt{p_2}}\right\}$, we have the following cases:
			\begin{align*}
				&(I)\,\,\,\,\,\,\, X \ll \lambda^3\,Y^{-2} \Longrightarrow\,\, n_2 \ll \frac{\lambda^3\,p_1\,C^2\,rn_1}{p^{\ell}_1q'_1\,N_0 \,Y^2} : = N_3. \\
				&(II)\,\,\,\,\,\,\, \lambda^3\, Y^{-2} \ll X \ll \lambda^2\, Y^{-1} \,\,\,\,\,\,\,\Longrightarrow \,\, N_3 \ll n_2 \ll N_4 := \frac{\lambda^2\,p_1\,C^2\,rn_1}{p^{\ell}_1 q'_1\,N_0 \,Y}.\\
				&(III)\,\,\,\,\,\,\, X \gg\lambda^2\, Y^{-1}\,\Longrightarrow\,\, N_4 \ll n_2 \ll N_2, \hspace{0.5cm} \text{where} \hspace{0.5cm} N_2 = \frac{p_1C^2Krn_1}{p^{\ell}_1q'_1N_0}.
			\end{align*}
			Let $\Omega^I_{\neq 0}, \Omega^{II}_{\neq 0}$ and $\Omega^{III}_{\neq 0}$ denotes the contribution to $\Omega_{2, \neq 0}(main)$ in cases $(I), (II)$ and $(III)$, respectively. Then we have
			
			\begin{align}\label{mmm1}
				\Omega_{2, \neq 0}(\mathrm{main}) \ll \Omega^I_{\neq 0} + \Omega^{II}_{\neq 0}+ \Omega^{III}_{\neq 0}.
			\end{align} 
			For the first case $(I)$, using the bound of character sum $\mathfrak{C}_{2, \neq 0}(main)$ given in Lemma \ref{a31} and the integral transform $\mathcal{G}_{\neq 0}(X) \ll Y^{-1}$ from Lemma \ref{d6}, we get that $\Omega^I_{\neq 0} $ is dominated by
			
			\begin{align*}
				\frac{p^2_1\,N_0\,p^{3\ell}_1(q'_1)^2\,r}{n^3_1\,M^{1/2}_1\,Y}\, \,&\displaystyle\sum_{q'_2\sim C/p^{\ell}_1q'_1}\,\displaystyle\sum_{q''_2\sim C/p^{\ell}_1q'_1}\, \sum_{d'_2|q'_2}\, \sum_{d''_2|q''_2} d'_2\,d''_2 \,\,\sum_{0<|n_{2}|\ll N_3}\,\mathop{\mathop{\mathop{\sum_{m \sim M_1}\,\sum_{m_1 \sim M_1}}_{n_1 q''_2 +  mn_2\overline{D_2} \equiv 0\;\mathrm{mod}\;d'_2}}_{n_1 q'_2 +  m_1n_2\overline{D_2} \equiv 0\;\mathrm{mod}\;d''_2}}_{\overline{m}q''_2 + \overline{m_1}q'_2 + n_2 \overline{n_1D_2} \equiv 0 \;\mathrm{mod}\;p_1}\,\,1.
			\end{align*}
			By changing the variable $q'_2 \longrightarrow q'_2d'_2$, $q''_2 \longrightarrow q''_2d''_2$, we get
			\begin{align}\label{m10}
				\Omega^I_{\neq 0} \ll&\,\,  \frac{p^2_1\,N_0\,p^{3\ell}_1(q'_1)^2\,r}{n^3_1\,M^{1/2}_1\,Y} \,\sum_{d'_2\leq C/p^{\ell}_1q'_1}\, \sum_{d''_2 \leq C/p^{\ell}_1q'_1} d'_2\,d''_2 \,\displaystyle\sum_{q'_2\sim C/p^{\ell}_1d'_2q'_1}\,\displaystyle\sum_{q''_2\sim C/p^{\ell}_1d''_2q'_1}\notag\\
				&\times \,\sum_{0<|n_{2}|<N_3}\, \mathop{\mathop{\mathop{\sum_{m \sim M_1}\,\sum_{m_1 \sim M_1}}_{n_1 q''_2d''_2 +  mn_2\overline{D_2} \equiv 0 \;\mathrm{mod}\;d'_2}}_{n_1 q'_2d'_2 +  m_1n_2\overline{D_2} \equiv 0 \;\mathrm{mod}\;d''_2}}_{{m_1}q''_2d''_2 + {m}(q'_2d'_2 + m_1n_2 \overline{n_1D_2}) \equiv 0 \;\mathrm{mod}\;p_1}\,\,1.
			\end{align} 
			Now, as in the case of a small modulus, we proceed similarly to count $d'_2$ or $d''_2$ and $m$ or $m_1$. We take the same four cases by fixing the parameters $n_1, n_2, d'_2(\text{or}\, d''_2), q'_2(\text{or}\, q''_2)$. In all four cases, we get the same bound for $\Omega^I_{\neq 0}$, which is given by
			\begin{align*}
				\Omega^I_{\neq 0}\ll\, \frac{p^2_1\,N_0\,p^{3\ell}_1(q'_1)^2\,r}{n^3_1\,M^{1/2}_1\,Y}\,\, M_1\,\frac{\lambda^3\,p_1\,C^2\,rn_1}{p^{\ell}_1q'_1\,N_0\,Y^2 }\,\frac{C^2}{(p^{\ell}_1q'_1)^2}\,\left(\frac{M_1}{p_1} + \frac{C}{p^{\ell}_1q'_1}\right) .
			\end{align*}
			On simplification, we get
			\begin{align}\label{k3}
				\Omega^I_{\neq 0} \ll \, \frac{r^2\,p^2_1\,C^4\,M^{3/2}_1\,\lambda^3}{\,n^2_1\,q'_1\,Y^3}.
			\end{align}

			Now we consider the case $(II)$ and find a bound for $\Omega^{II}_{\neq 0}$. By plugging the bound of character sum $\mathfrak{C}_{2, \neq 0}(main)$ given in Lemma \ref{a31} and for the integral transform $\mathcal{G}_{\neq 0}(X) \ll Y^{-1}\,X^{-1/3} = Y^{-1}\left({N_3\,Y^2}/{\lambda^3\,n_2}\right)^{1/3}$ from Lemma \ref{d6}, we get 
			\begin{align*}
				\Omega^{II}_{\neq 0}\, \ll&\,\,  \frac{p^2_1\,N_0\,p^{3\ell}_1(q'_1)^2\,r}{n^3_1\,M^{1/2}_1\,Y} \, \frac{N^{1/3}_3\,Y^{2/3}}{\lambda}\,\sum_{d'_2\leq C/p^{\ell}_1q'_1}\, \sum_{d''_2 \leq C/p^{\ell}_1q'_1} d'_2\,d''_2 \,\displaystyle\sum_{q'_2\sim C/p^{\ell}_1d'_2q'_1}\,\displaystyle\sum_{q''_2\sim C/p^{\ell}_1d''_2q'_1}\notag\\
				&\times \,\sum_{N_3<|n_{2}|<N_4}\,\frac{1}{n^{1/3}_2}\, \mathop{\mathop{\mathop{\sum_{m\sim M_1}\,\sum_{m_1 \sim M_1}}_{n_1 q''_2d''_2 +  mn_2\overline{D_2} \equiv 0 \;\mathrm{mod}\;d'_2}}_{n_1 q'_2d'_2 +  m_1n_2\overline{D_2} \equiv 0 \;\mathrm{mod}\;d''_2}}_{{m_1}q''_2d''_2 + {m}(q'_2d'_2 + m_1n_2 \overline{n_1D_2}) \equiv 0 \;\mathrm{mod}\;p_1}\,\,1.    
			\end{align*}
			Again, for the counting purpose, by taking all four possible cases as in Lemma \ref{a44}, and then trivially estimating the remaining sums, we get 
			\begin{align*}
				\Omega^{II}_{\neq 0}\, \ll&\,\,  \frac{p^2_1\,N_0\,p^{3\ell}_1(q'_1)^2\,r\,N^{1/3}_3}{n^3_1\,M^{1/2}_1\,Y^{1/3}\,\lambda}\,\, M_1\,N^{2/3}_4\,\frac{C^2}{(p^{\ell}_1q'_1)^2}\,\left(\frac{M_1}{p_1} + \frac{C}{p^{\ell}_1q'_1}\right). 
			\end{align*} 
			By plugging the values of $N_3$ and $N_4$, we get
			\begin{align}\label{k2}
				\Omega^{II}_{\neq 0} \ll\,\frac{r^2\,p^2_1\,C^4\,M^{3/2}_1\,\lambda^{4/3}}{ \,q'_1\,n^2_1\,Y^{5/3}}.
			\end{align}
			Let us consider the final case $(III)$ to find bound of $\Omega^{III}_{\neq 0}$. By using the bound of character sum $\mathfrak{C}_{\neq 0}(...)$ from Lemma \ref{a31} and the bound for integral transform $\mathcal{G}_{\neq 0}(X) \ll Y^{-1}\,X^{-1/2} = Y^{-1}\,\left({N_3\,Y^2}/{\lambda^3\,n_2}\right)^{1/2}$ from Lemma \ref{d6} and following the same steps as in case $(II)$, we get
			
			\begin{align*}
				\Omega^{III}_{\neq 0} \ll& \, \frac{rp^2_1\,N_0\,q_1^{'2}p_1^{3\ell}\,N^{1/2}_3}{n^3_1\,M^{1/2}_1\,\lambda^{3/2}}\,\, M_1\,N^{1/2}_2\,\frac{C^2}{(p^{\ell}_1q'_1)^2}\,\left(\frac{M_1}{p_1} + \frac{C}{p^{\ell}_1q'_1}\right). 
			\end{align*}
			Plugging the values of $N_2$ and $N_3$, we get
			
			\begin{align}\label{k1}
				\Omega^{III}_{\neq 0} \ll\, \frac{r^{2}\,p^{2}_1\,C^{4}\,K^{1/2}\,M^{3/2}_1}{\,n^2_1\,q'_1\,Y}.
			\end{align}
			Now, using equations \eqref{k3},\eqref{k2}, \eqref{k1} and \eqref{mmm1}, we get
			
			\begin{align*}
				\Omega_{2, \neq 0}(\mathrm{main}) \ll\, \frac{r^{2}\,p^{2}_1\,C^4\,M^{3/2}_1}{\,n^2_1\,q'_1\,Y} \left( \frac{\lambda^3}{Y^2}+ \frac{\lambda^{4/3}}{Y^{2/3}} + {K^{1/2}}\right).
			\end{align*}
		Analysis for $	\Omega_{2, \neq 0}(\mathrm{err_i}) $ is similar. Thus we get 
			\begin{align*}
			\Omega_{2, \neq 0} \ll 	\Omega_{2, \neq 0}(\mathrm{main})+ 	\Omega_{2, \neq 0}(\mathrm{err_1})+ 	\Omega_{2, \neq 0}(\mathrm{err_2}) \ll\, \frac{r^{2}\,p^{2}_1\,C^4\,M^{3/2}_1}{\,n^2_1\,q'_1\,Y} \left( \frac{\lambda^3}{Y^2}+ \frac{\lambda^{4/3}}{Y^{2/3}} + {K^{1/2}}\right).
		\end{align*}
			\noindent Now recall from \eqref{MS3}, we have
			\begin{align*}
				S_{gen}(N) \ll&\,N^{\epsilon}\mathop{\mathop{\text{sup}}_{QK/t \ll C \ll Q}}_{M_1 \ll M_0} \frac{ N^{5/12}}{p^{3/2}_1\,D_2^{1/4}\,C^{3/2}\,r^{2/3}}\sum_{\frac{n_1}{(n_1,r)}\ll C}n^{1/3}_1 \sum_{\frac{n_1}{(n_1,r)}|q'_1|(n_1r)^{\infty}} \Theta^{1/2} \left(\Omega_{2, \neq 0} \right)^{1/2}.
			\end{align*}
			Plugging the bound of $\Omega_{2, \neq 0}$ and using Lemma \ref{m20} for the estimate of $n_1$-sum, we get 
			
			\begin{align*}
				S_{gen}(N) \ll&\, \,{r^{1/2}\,p^{1/4}_1\,D^{1/2}_2\,N^{3/4}{K^{3/4}}}N^{\epsilon} + \frac{r^{1/2}\,p^{1/4}_1\,D^{1/2}_2\,N^{3/4}\,t^{2/3}}{\,K^{1/12}}N^{\epsilon}+ \frac{r^{1/2}\,p^{1/4}_1\,D^{1/2}_2\,N^{3/4}\,t}{ \,K^{1/2}}N^{\epsilon}\\
				=& \,{{r^{1/2}\,p^{1/4}_1\,D^{1/2}_2\,N^{3/4}}} \,\left( K^{3/4} + \frac{t^{2/3}}{K^{1/12}} + \frac{t}{K^{1/2}}\right)N^{\epsilon}.
			\end{align*}
			From the proof of Lemma \ref{a9}, we have
			\begin{align*}
				D_2 = \frac{p_1p_2}{p_1 (q, p_2)} =\,
				\begin{cases}
					p_2 &\text{if}\,\,(p_2, q) = 1, \\
					1 &\text{if}\,\, p_2 | q.
				\end{cases}
			\end{align*}
			As mentioned in Lemma \ref{a44}, the case $(p_2, q) = 1$ gives the dominating bound. This proves our lemma. 
		\end{proof}
		\vspace{0.2cm}
		
		As we have 
		\begin{align*}
			\mathcal{S}_2 \,  \,\ll\, \mathcal{S}_{2, 0} + \mathcal{S}_{2, \neq 0} \,\ll\, \mathcal{S}_{2, 0} + \mathcal{S}_{small}(N) +  \mathcal{S}_{gen}(N).     
		\end{align*}
		Hence, by using Lemma \ref{a43}, Lemma \ref{a44} and Lemma \ref{a45}, we get
		\begin{align}\label{x}
			\mathcal{S}_2 \ll\,r^{1/2}\,p^{3/4}_1\,K^{3/4}\,N^{3/4}\,N^{\epsilon} +  {{r^{1/2}\,p^{1/4}_1\,p^{1/2}_2\,N^{3/4}}} \left(\frac{K^{5/4}}{t^{1/2}} + K^{3/4} + \frac{t^{2/3}}{K^{1/12}} + \frac{t}{K^{1/2}}\right).    
		\end{align}
		\vspace{0.3cm}
		
		\section{Analysis for $ \mathcal{S}_1$} \label{S1}
		Recall that $ \mathcal{S}_1$ denote the contribution of $b \equiv 0\, \mathrm{mod}\, p_1$, $(p_1, q)=1$  and it is given by 
		\begin{align}\label{hb1}
			\notag \mathcal{S}_1 =& \,\frac{1}{KQp_1}\displaystyle\int_{\mathbb{R}} W(u)\,\int_{\mathbb{R}}\, V_1\left(\frac{\nu}{K}\right)\, \displaystyle\sum_{1\leq q\leq Q}\frac{\psi(q,u)}{q}\sideset{}{^\star}{\displaystyle\sum}_{a \, \mathrm{mod} \, q}\\
			&\times \, \sum_{n=1}^{\infty}A_{\pi}(r,n)e\left(\frac{an}{q }\right) \,v_1(n)\,\sum_{m=1}^{\infty}\lambda_f (m)\,e\left(-\frac{am}{q }\right)\,v_2(m)\, \mathrm{d}u\,\mathrm{d}\nu.
		\end{align}
		We proceed to apply Voronoi formulae as in the previous cases. Analysis for integral transforms will be exacly same in the case. We highlight the changes in the arithmetic part.	We apply  $GL(3)$-Voronoi to the $n$-sum as before and we arrive at
		\begin{align*}
			\frac{N^{2/3+i\nu}}{q} \sum_{\pm}  \sum_{n_{2}\sim (p_1QK)^3/N}  \frac{A_{\pi}(n_{2},1)}{ n^{1/3}_{2}} S\left( \overline{a}, \pm n_{2}; q\right) \eth_1^{\pm}(...),
		\end{align*} where $ \eth_1^{\pm}(...)$ is the same  integral transform as  in Lemma \ref{a8} and we have  square-root cancellations in $ \eth_1^{\pm}(...)$. Thus we save, on applying Weil's bound for  Kloosterman sums,   $\frac{N}{(QK)^{3/2}}$  in this step. Next we apply  the $GL(2)$ Voronoi summation formula to the sum over $m$. Note that the arithmetic conductor $c$, say, is $p_1p_2 q^2$ if $(p_2,q)=1$ and $c=p_1q^2$, if $p_2|q$. Thus we benefit more if  $p_2|q$ in an  application of  $GL(2)$ Voronoi. We proceed with the case $(p_2,q)=1$ and we arrive at
		\begin{align*}
			\frac{ N^{3/4-i(t+\nu)}}{q^{1/2}\,(p_1p_2)^{1/4}}  \sum_{\pm} \sum_{m \sim (Qt)^2p_1p_2/N} \frac{\lambda_f(m)}{m^{1/4}}  e\left(-m  \frac{\overline{a p_1p_2}}{q}\right) \mathcal{I}^{\pm} (...),
		\end{align*}  where $\mathcal{I}^{\pm} (...)$ is as  defined in Lemma \ref{a9}. We save $\frac{N}{(p_1p_2)^{1/2}Qt}$ in this step. We arrive at the following expression 
		\begin{align*}
			\frac{ N^{17/12}}{KQ^{7/2}\,p_1^{5/4}p_2^{1/4}}\,\displaystyle\sum_{q\sim Q}\,\sum_{n_{2}\sim (QK)^{3}/N}  \frac{A_{\pi}(n_{2},1)}{ n^{1/3}_{2}}\,\sum_{m \sim (Qt)^{2}p_1p_2/N} \frac{\lambda_f(m)}{m^{1/4}}\, \mathcal{C}(...)\,\mathcal{J}(...),
		\end{align*}
		where the character sum $\mathcal{C}(...)$ is given by
		\begin{align*}
			\sideset{}{^\star}{\displaystyle\sum}_{a \, \mathrm{mod} \, q}\,\,S\left( \overline{a}, \pm n_{2};  q\right)\,e\left(-m  \frac{\overline{a p_1p_2}}{q}\right)  \rightsquigarrow \, q\, e\left(\pm \frac{\overline{m}p_1p_2n_2}{q}\right).
		\end{align*}  Thus we  save $\sqrt{Q}$ in the $a$-sum.  In the $\nu$-integral, we get the square-root cancellation of size $\sqrt{K}$ by stationary phase analysis.  The total savings so far is given by 
		\begin{align}\label{total sav}
			\frac{N}{(QK)^{3/2}} \times \frac{N}{(p_1p_2)^{1/2}Qt} \times \sqrt{Q} \times p_1 \times \sqrt{K}   = N \times \frac{p_1^{3/2}}{p_2^{1/2}\,t},
		\end{align}
		where the factor $p_1$ in the middle represents the  saving coming from   the $b$-sum, which is just one element in this case. We apply  Cauchy-Schwartz inequality to the  sum over $n_2$ to get rid of $A_{\pi}(1,n)$ and we arrive at
		\begin{align*}
			\,\Bigg(\sum_{n_{2}\sim (QK)^3/N} \,\left|\displaystyle\sum_{q\sim Q}\,\,\sum_{m \sim (Qt)^2p_1p_2/N} \frac{\lambda_f(m)}{m^{1/4}}\,e\left(\frac{\bar{m}p_1p_2n_2}{q}\right)\, \mathcal{J}(...)\right|^2 \Bigg)^{1/2}.
		\end{align*}
		We open the absolute valued square and apply the Poisson summation formula to sum over $n_2$. This step is exactly similar to  previous cases (in fact easier as the  modulus is $qq^\prime$). In the case of zero frequency $n_2 =0$, we will save the whole diagonal length i.e. $Q^3 t^2p_1p_2/N$.  In the case of non-zero frequency  $n_2 \neq 0$, we will save $	{(QK)^{3}}/{ (N\sqrt{K} )}$,  which is satisfactory when
		Optimising the diagonal and the off-diagonal  savings, we see that the  total  savings  are given by 
		\begin{align}
			N \times \frac{p_1^{3/2}}{p_2^{1/2}\,t} \times \left(\frac{Q^3K^3}{N\sqrt{K}}\right)^{1/2}=   N \times \frac{p_1^{3/2}}{p_2^{1/2}\,t} \times \frac{N^{1/4} \sqrt{K}}{p_1^{3/4}}=N \times ( {t^{3/20} p_2^{3/40} p_1^{53/40} }).
		\end{align}
		Thus we see that we have a saving in all ranges of $p_1$, $p_2$ and $t$ in this case and we get 
		 \begin{align}\label{z}
		 	 \mathcal{S}_1 \ll \frac{N}{{t^{3/20} p_2^{3/40} p_1^{53/40} }}.
		 \end{align}
		\section{Final estimates}
		We have
		\begin{align*}
			S_{r}(N)\,= \mathcal{S}_1 + \mathcal{S}_2.
		\end{align*}
		By using the bound  \eqref{z} and  \eqref{x}, we get
		\begin{align*}
			S_{r}(N)\,\ll \,&\frac{N}{{t^{3/20} p_2^{3/40} p_1^{53/40} }}+ r^{1/2}\,p^{3/4}_1\,N^{3/4}\,K^{3/4}  \\
		&	+ {r^{1/2}\,p^{1/4}_1\,p^{1/2}_2\,N^{3/4}}\, \left(\frac{K^{5/4}}{t^{1/2}} + K^{3/4} + \frac{t^{2/3}}{K^{1/12}} + \frac{t}{K^{1/2}}\right). 
		\end{align*} 
		By  the optimal value of the parameter $K$ i.e. $K = \left({p_2\,t^{2}}/{p_1}\right)^{2/5}$, we get
		
		\begin{align*}
			S_{r}(N)\,\ll \,r^{1/2}\,p^{9/20}_1\,p^{3/10}_2\,t^{3/5}\,N^{3/4} .
		\end{align*}
Now recall Lemma \ref{AFE}
		\begin{align*}
			L \left( \frac{1}{2}+it, \, \pi \times f \right) \ll_{\pi, \, \varepsilon} \mathcal{Q}^{\varepsilon}\sum_{r \leq \mathcal{Q}^{(1+2\varepsilon)/4}}\frac{1}{r}  \sup_{ N\leq \frac{\mathcal{Q}^{1/2+\varepsilon}}{r^2}}\frac{|S_r(N)|}{N^{1/2}} +\mathcal{Q}^{-2023},
		\end{align*} Putting value of $S_{r}(N)$ in above expression, we get
		\begin{align*}
			L \left( \frac{1}{2}+it, \, \pi \times f \right)& \ll_{\pi,  \, \varepsilon}\,\mathcal{Q}^{1/8+\varepsilon}\,p^{9/20}_1\,p^{3/10}_2\,t^{3/5}\,\sum_{r \leq \mathcal{Q}^{(1+2\varepsilon)/4}}\frac{1}{r}\, +\mathcal{Q}^{-2023}\\
			&\ll_{\pi, \, \varepsilon}\,\mathcal{Q}^{1/8+\varepsilon}\,p^{9/20}_1\,p^{3/10}_2\,t^{3/5}. 
		\end{align*} So, finally we get
		\begin{align}\label{s10}
			L \left( \frac{1}{2}+it, \, \pi \times f \right) \ll_{\pi, \, \varepsilon}\,\, \left(\frac{p_1}{p_2\,t^2}\right)^{3/40}\,{\mathcal{Q}^{1/4+\varepsilon}}.
		\end{align} 
		The above bound is subconvex in the $ p_2 <p_1 < p_2t^2$ range. This completes the proof of  Theorem \ref{main theorem}.
		
\vspace{0.4cm}		
{\bf Acknowledgement:} 
The authors are grateful to the anonymous referee for numerous helpful comments, which significantly improved the presentation of this paper.   Mohd Harun would like to thank the Indian Institute of Technology Kanpur for its wonderful academic atmosphere. Sumit Kumar would like to thank HUN-REN Alfr\'ed R\'enyi Institute of Mathematics for providing an excellent environment for completing this work. During this work, S. K. Singh was partially supported by D.S.T. Inspire faculty fellowship no. DST/INSPIRE/04/2018/000945.

		{}

	\end{document}